\newtheorem{thm}{Theorem}[section]
 \newtheorem{cor}[thm]{Corollary}
 \newtheorem{lem}{Lemma}[section]
 \newtheorem{prop}{Proposition}[section]
 \theoremstyle{definition}
 \newtheorem{defn}{Definition}[section]
 \theoremstyle{remark}
 \newtheorem{rem}{Remark}
 \newtheorem{example}{Example}[section]
 \numberwithin{equation}{section}
 \DeclareMathOperator{\IM}{Im}
\begin{document}

\newcommand{\auths}[1]{\textrm{#1},}
\newcommand{\artTitle}[1]{\textsl{#1},}
\newcommand{\jTitle}[1]{\textrm{#1}}
\newcommand{\Vol}[1]{\textbf{#1}}
\newcommand{\Year}[1]{\textrm{(#1)}}
\newcommand{\Pages}[1]{\textrm{#1}}
\newcommand{\RNum}[1]{\uppercase\expandafter{\romannumeral #1\relax}}

\title[Cohomology ring of manifold arrangements]
 {Cohomology ring of manifold arrangements}

\email{}

\author{Junda Chen, Zhi L\"u and Jie Wu}
\address{School of Mathematical Sciences, East China Normal University, Shanghai,   China}
\email{jdchen@math.ecnu.edu.cn}
\address{School of Mathematical Sciences, Fudan University\\ Shanghai\\ 200433\\ China}
\email{zlu@fudan.edu.cn}
\address{School of Mathematical Sciences, Hebei Normal University\\
Shijiazhuang, China}
\email{wujie@hebtu.edu.cn}
\urladdr{http://www.math.nus.edu.sg/\~{}matwujie}
\thanks{The second author is partially supported by the grant from NSFC (No. 11971112).
}
\keywords{Manifold arrangement,  OS-algebra, Presheaf, Spectral Sequence}
\begin{abstract}
We study the cohomology ring of the complement $\mathcal{M}(\mathcal{A})$ of a manifold arrangement $\mathcal{A}$ in a smooth manifold $M$ without boundary. 	
 We first give the concept of monoidal presheaf on a locally geometric poset $\mathfrak{L}$, and then define the generalized Orlik--Solomon
algebra $A^*(\mathfrak{L}, \mathcal{C})$ over a commutative ring with unit, which is built by the classical Orlik--Solomon
algebra and a monoidal presheaf $\mathcal{C}$ as coefficients. Furthermore, we construct
a  monoidal presheaf $\hat{\mathcal{C}}(\mathcal{A})$ associated with $\mathcal{A}$, so that the generalized Orlik--Solomon
algebra $A^*(\mathfrak{L}, \hat{\mathcal{C}}(\mathcal{A}))$ becomes a double complex with suitable multiplication  structure and  the associated total complex $Tot(A^*(\mathfrak{L}, \hat{\mathcal{C}}(\mathcal{A})))$ is a differential algebra. Our main result is that $H^*(Tot(A^*(\mathfrak{L}, \hat{\mathcal{C}}(\mathcal{A}))))$ is isomorphic to $H^*(\mathcal{M}(\mathcal{A}))$ as algebras. Our argument is of topological with the use of a spectral sequence induced by a geometric filtration associated with $\mathcal{A}$. In particular, we also discuss the mixed Hodge complex structure on our model if $M$ and all elements in $\mathcal{A}$ are complex smooth varieties, and show that it induces the canonical mixed Hodge structure of $\mathcal{M}(\mathcal{A})$. As an application, we calculate the cohomology of chromatic configuration spaces, which agrees with many known results in some special cases. In addition,  some explicit formulas with respect to Poincar\'e polynomial and chromatic polynomial are also given.
\end{abstract}

\maketitle
\section{Introduction}
P. Orlik and L. Solomon \cite{Orlik1980} introduced the OS-algebra $A^*(\mathcal{A})$ of central hyperplane arrangements $\mathcal{A}$ over complex number field $\mathbb{C}$, and proved that $A^*(\mathcal{A})$ is isomorphic to the cohomology ring of the complement of $\mathcal{A}$. Essentially, $A^*(\mathcal{A})$ only depends on the intersection lattice $L$ of $\mathcal{A}$,  so the OS-algebra  may be defined for every geometric lattice $L$, also denoted by $A^*(L)$.  For more general case of subspace arrangement, the cohomology ring of complement is studied by de Longueville and Schultz \cite{DeLongueville2001} using topological methods and by Deligne, Goresky, and MacPherson \cite{Deligne2000} with a sheaf-theoretic approach, also see \cite{Feichtner2000,DeConcini1995,Yuzvinsky2002} for more work about subspace arrangement.

\vskip .2cm

Dupont \cite{Dupont2013} studied the cohomology of complement of algebraic hypersurface arrangements and built an OS-model  for hypersurface arrangements.
 Bibby \cite{Bibby2013} studied the abelian arrangements,  and also gave  a kind of OS-model by Leray spectral sequence. Their works lead many applications. For example, F. Callegaro \cite{Callegaro2017} and Pagaria \cite{Pagaria2017}  studied the cohomology of complements of a toric arrangements with integer coefficients, and B. Berceanu et al. \cite{Berceanu2017} studied the cohomology of partial configuration spaces of Riemann surfaces. Also see \cite{Callegaro2018, Pagaria2018} for more recent work along this direction.

\vskip .2cm
In this paper we are concerned with manifold arrangements,  introduced in \cite{Ehrenborg2014}, i.e., arrangements of smooth submanifolds (may have codimension greater than one and need not to be algebraic) with "clean" intersection. The notion of manifold arrangements is a generalization for some classical
arrangements, such as hyperplane (or hypersurface) arrangements and the configuration spaces of manifolds.
We study the cohomology ring of  complement of manifold arrangements, and give a model with "monoidal presheaf" as coefficients of manifold arrangements by an elementary and pure topological approach. In particular, we discuss the mixed Hodge structure on our model if every manifold  is a complex smooth variety.

\vskip .2cm

Let $\mathcal{A}=\{N_i\}$ be a manifold arrangement with locally geometric quasi-intersection poset $\mathfrak{L}$ in a smooth manifold $M$ without boundary, where each $N_i$ is a smooth  submanifold and a closed subset in $M$. Let $\mathcal{M}(\mathcal{A})$ be the complement of $\mathcal{A}$ in $M$, see Section \ref{notion} for precise definitions. Then  every interval $[0,p]$ as a lattice defines an OS-algebra $A^*([0,p])$ in the sense of  Orlik and  Solomon, which encodes the  local 
combinatorial data of $\mathfrak{L}$ and plays an  important role in our discussion.
Furthermore, we will carry out our work over a commutative ring $R$ with unit as follows:
\begin{itemize}
\item[(A)] First we  generalize the OS-algebra for lattices into a "global" version $A^*(\mathfrak{L},\mathcal{C})$, which is built by $A^*([0,p])$ and a "presheaf" $\mathcal{C}$ as coefficients, where the presheaf on a poset is a concept defined in Section \ref{presheaf}. For any manifold arrangement $\mathcal{A}$, there is an associated presheaf $\mathcal{C}(\mathcal{A})$ as a cochain complex,  as we will see in Section \ref{manifold arrangement}, 
    which encodes the topological data of $\mathcal{A}$. Moreover, two differentials on $A^*([0,p])$ and $\mathcal{C}(\mathcal{A})$ induce a double complex structure on $A^*(\mathfrak{L},\mathcal{C}(\mathcal{A}))$ with differentials $\partial$ and $\delta$. In particular, this double complex also becomes a total complex  with differential $\partial+\delta$ of degree 1, denoted by $Tot(A^*(\mathfrak{L},\mathcal{C}(\mathcal{A})))$, which can be regarded as a cochain complex.  We first show that $H^*(Tot(A^*(\mathfrak{L},\mathcal{C}(\mathcal{A}))))$ is isomorphic to $H^*(\mathcal{M}(\mathcal{A}))$ as modules.
    This result can be understood as a "categorification" of M\"{o}bius inversion formula, which is essentially based upon a filtration of $A^*(\mathfrak{L},\mathcal{C}(\mathcal{A}))$ induced by a geometric filtration associated with $\mathcal{A}$. In particular, if $N_i$ and $M$ are all complex smooth varieties, then we can  modify  $\mathcal{C}(\mathcal{A})$ into $\mathcal{K}(\mathcal{A})$, such that the model $Tot(A^*(\mathfrak{L},\mathcal{K}(\mathcal{A}))$ becomes a mixed Hodge complex, inducing the canonical mixed Hodge structure of $H^*(\mathcal{M}(\mathcal{A}))$.
\item[(B)] Generally, $A^*(\mathfrak{L},\mathcal{C})$ is not a differential algebra unless the presheaf $\mathcal{C}$ has a suitable product structure (in this case, we call it a "monoidal presheaf" in Section \ref{sec mono}). Indeed, the cochain complex $\mathcal{C}(\mathcal{A})$ is not a monoidal presheaf under the natural cup product in general. We construct a  monoidal presheaf $\hat{\mathcal{C}}(\mathcal{A})$ on $\mathfrak{L}$ as a 'flat' version of $\mathcal{C}(\mathcal{A})$ such that their cohomology groups $H^*(\hat{\mathcal{C}}(\mathcal{A}))$ and $H^*(\mathcal{C}(\mathcal{A}))$ are isomorphic. Then we obtain a generalized OS-algebra $A^*(\mathfrak{L},\hat{\mathcal{C}}(\mathcal{A}))$, abbreviate it as $A^*(\mathcal{A})$. We will prove that $A^*(\mathcal{A})$ is a double complex with suitable multiplication  structure such that the associated total complex $Tot(A^*(\mathcal{A}))$ is a differential algebra, whose cohomology can be realized as $H^*(\mathcal{M}(\mathcal{A}))$ as algebras.
\end{itemize}

Our main result is stated as follows:
\begin{thm} As algebras,
$$H^*(Tot(A^*(\mathcal{A})))\cong H^*(\mathcal{M}(\mathcal{A})).$$
\end{thm}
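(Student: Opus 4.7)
The plan is to leverage the module-level statement obtained in part (A) and upgrade it multiplicatively via a comparison of differential graded algebra models. Since part (A) yields $H^*(Tot(A^*(\mathfrak{L},\mathcal{C}(\mathcal{A}))))\cong H^*(\mathcal{M}(\mathcal{A}))$ as $R$-modules, and since $\hat{\mathcal{C}}(\mathcal{A})$ is designed to have the same termwise cohomology as $\mathcal{C}(\mathcal{A})$, I would first check that the spectral sequence argument of part (A), induced by the geometric filtration associated with $\mathcal{A}$, depends only on the termwise cohomology of the coefficient presheaf. Applied to $\hat{\mathcal{C}}(\mathcal{A})$, this delivers the $R$-module isomorphism $H^*(Tot(A^*(\mathcal{A})))\cong H^*(\mathcal{M}(\mathcal{A}))$ and reduces the theorem to matching the multiplicative structures.

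To upgrade to algebras, I would construct an explicit comparison morphism $\Phi:Tot(A^*(\mathcal{A}))\to C^*(\mathcal{M}(\mathcal{A}))$ into a fixed multiplicative cochain model of the complement (for instance singular cochains with the Alexander--Whitney cup product, or de Rham forms when $M$ is smooth). On an OS-type generator $e_S\otimes c$, where $S\subset\mathfrak{L}$ is a no-broken-circuit subset with join $p$ and $c\in\hat{\mathcal{C}}(\mathcal{A})(p)$, define $\Phi(e_S\otimes c)$ as the wedge of angular/Thom cochains of the submanifolds $N_i$ ($i\in S$) along tubular neighborhoods, multiplied by a cochain representative of $c$ pulled back from a small neighborhood of the intersection $N_p$. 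The geometric filtration on $\mathcal{M}(\mathcal{A})$ and the poset-rank filtration on $A^*(\mathcal{A})$ are both preserved by $\Phi$.

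Next I would verify that $\Phi$ is a morphism of differential graded algebras: the Orlik--Solomon relations translate into the classical local relations among angular/linking classes at a clean intersection, while the monoidal axioms on $\hat{\mathcal{C}}(\mathcal{A})$ guarantee that the product of local sections over $p,q\in\mathfrak{L}$ restricts compatibly to a section over $p\vee q$, which is precisely what the corresponding cup product of cochain representatives does in a neighborhood of $N_{p\vee q}$. Both spectral sequences are then multiplicative, and by the reduction of the previous paragraph $\Phi$ induces on $E_1$ the identification from part (A). The multiplicative comparison theorem for spectral sequences forces $\Phi$ to be an algebra quasi-isomorphism, and the associated graded algebra isomorphism lifts to the abutments, yielding the desired isomorphism of cohomology rings.

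The principal obstacle is the strict multiplicativity of $\Phi$, namely the coherence between the OS-product, the monoidal product of $\hat{\mathcal{C}}(\mathcal{A})$, and the cup product on $C^*(\mathcal{M}(\mathcal{A}))$. The presheaf $\mathcal{C}(\mathcal{A})$ with its naive cup product is insufficient, which is exactly why one introduces the flat replacement $\hat{\mathcal{C}}(\mathcal{A})$; showing that the strict monoidal structure on $\hat{\mathcal{C}}(\mathcal{A})$ agrees with the derived cup product \emph{at the cochain level}, and not only after passing to cohomology, will likely require an intermediate \v{C}ech-type resolution interpolating between $Tot(A^*(\mathcal{A}))$ and $C^*(\mathcal{M}(\mathcal{A}))$, through which $\Phi$ factors. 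Once this strictification is in place, the remaining verifications -- multiplicativity on each bigraded piece and compatibility with the two differentials $\partial$ and $\delta$ -- reduce to local computations already encoded in the monoidal axioms.
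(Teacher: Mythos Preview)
Your first paragraph is correct and matches the paper: one re-runs the module argument of part (A) with the coefficient presheaf $\hat{\mathcal{C}}(\mathcal{A})$ in place of $\mathcal{C}(\mathcal{A})$, using that the geometric filtration has the same termwise cohomology in both cases. This yields the module isomorphism for $Tot(A^*(\mathcal{A}))$.

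From the second paragraph on, however, you take a much harder road than necessary, and the ``principal obstacle'' you identify is self-inflicted. The paper does \emph{not} construct a geometric comparison map via angular or Thom cochains. Instead, it observes that the module-level quasi-isomorphism of Corollary~\ref{main cor} is induced by the obvious quotient map
\[
\Phi:\ Tot(A^*(\mathcal{A}))\;\longrightarrow\;\hat{C}^*(\mathcal{M}(\mathcal{A}))
\]
which sends $A^*(\mathfrak{L},\hat{\mathcal{C}}(\mathcal{A}))_q$ to zero for every $q>\mathbf{0}$, and on the $q=\mathbf{0}$ summand is just the restriction $\hat{C}^*(M)\to\hat{C}^*(\mathcal{M}(\mathcal{A}))$. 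This $\Phi$ is \emph{trivially} a DGA map: the product on $A^*(\mathcal{A})$ satisfies $A^*_p\cdot A^*_q\subset\bigoplus_{s\in p\mathring{\vee}q}A^*_s$, so any product involving a factor of positive rank lands in positive rank and is killed by $\Phi$, while on the rank-zero piece $\Phi$ is the cup-product-preserving restriction map. No angular forms, no \v{C}ech interpolation, no strictification is needed. Since $\Phi$ is a DGA map and a quasi-isomorphism (by the first paragraph), it induces an algebra isomorphism on cohomology, and the proof is complete.

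In short: you correctly reduce to finding a multiplicative quasi-isomorphism, but you overlook that the quasi-isomorphism already produced by the module argument \emph{is} one. Your proposed $\Phi(e_S\otimes c)$ involving Thom cochains would be nonzero on positive-rank generators, which is why you run into coherence problems; the paper's $\Phi$ simply annihilates those generators.
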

Furthermore, the spectral sequence associated with double complex $A^*(\mathcal{A})$ is a spectral sequence of algebra.
\begin{cor}\label{main spec}
	There is a spectral sequence of algebra with  $E_1^{*,*}\cong A^*(\mathfrak{L},H^*(\mathcal{C}(\mathcal{A})))$ converges to $H^*(\mathcal{M}(\mathcal{A}))$ as algebras.
\end{cor}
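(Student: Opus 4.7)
The plan is to exhibit the spectral sequence as one of the two canonical spectral sequences attached to the double complex $A^*(\mathcal{A})=A^*(\mathfrak{L},\hat{\mathcal{C}}(\mathcal{A}))$, carefully choosing the filtration direction so that the $\delta$-differential (the one coming from the coefficient presheaf $\hat{\mathcal{C}}(\mathcal{A})$) is the $E_0$-differential. Concretely, I would filter $Tot(A^*(\mathcal{A}))$ by the OS-degree coming from $A^*([0,p])$, so that the associated graded pieces are isomorphic, column by column, to copies of $\hat{\mathcal{C}}(\mathcal{A})$ tensored against the local OS-data of $\mathfrak{L}$. The $E_0$-page is then $A^*(\mathcal{A})$ with $d_0=\delta$, and by construction this filtration is compatible with the multiplication introduced on the total complex.

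The next step is to compute the $E_1$-page. Taking $\delta$-cohomology commutes with the OS-coefficient machinery because $A^*(\mathfrak{L},-)$ is, by definition, built level-wise from the presheaf of coefficients; hence one obtains $E_1^{*,*}\cong A^*(\mathfrak{L},H^*(\hat{\mathcal{C}}(\mathcal{A})))$. Using the fact, recalled in item (B) of the introduction, that the flat replacement $\hat{\mathcal{C}}(\mathcal{A})$ has the same cohomology presheaf as $\mathcal{C}(\mathcal{A})$ (that is, $H^*(\hat{\mathcal{C}}(\mathcal{A}))\cong H^*(\mathcal{C}(\mathcal{A}))$ as presheaves on $\mathfrak{L}$), I can identify the $E_1$-page with $A^*(\mathfrak{L},H^*(\mathcal{C}(\mathcal{A})))$, as claimed.

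For the multiplicative statement, the point is that the filtration by OS-degree is multiplicative: products of filtered pieces land in the expected filtered piece because the multiplication on $A^*(\mathcal{A})$ is the tensor of the OS-product with the monoidal product of $\hat{\mathcal{C}}(\mathcal{A})$, both of which preserve their respective gradings. Consequently the differentials $d_r$ are derivations, the spectral sequence is one of algebras, and passing to $E_1$ produces the natural algebra structure on $A^*(\mathfrak{L},H^*(\mathcal{C}(\mathcal{A})))$ obtained from the induced monoidal product on the cohomology presheaf. Convergence as algebras to $H^*(\mathcal{M}(\mathcal{A}))$ follows from the standard convergence of the spectral sequence of a bounded filtration on a double complex, together with the main theorem, which identifies $H^*(Tot(A^*(\mathcal{A})))$ with $H^*(\mathcal{M}(\mathcal{A}))$ as algebras and hence ensures that the associated graded algebra of $H^*(\mathcal{M}(\mathcal{A}))$ with respect to the induced filtration is $E_\infty^{*,*}$.

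The main obstacle I expect is verifying multiplicativity at $E_1$: one must check that the monoidal structure on the presheaf $\hat{\mathcal{C}}(\mathcal{A})$ (which is designed precisely so that $A^*(\mathcal{A})$ is an honest differential graded algebra) descends to the cohomology presheaf $H^*(\mathcal{C}(\mathcal{A}))$ in a way that is compatible with the OS-multiplication at each interval $[0,p]$, and that this compatibility is exactly what one obtains by applying the construction $A^*(\mathfrak{L},-)$ to the monoidal presheaf $H^*(\hat{\mathcal{C}}(\mathcal{A}))$. This should be a direct but somewhat technical book-keeping argument, and is the only non-formal part of the corollary; everything else is the standard multiplicative spectral sequence of a filtered differential graded algebra applied to the machinery already set up in item (B) and the main theorem.
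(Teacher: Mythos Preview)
Your proposal is correct and follows essentially the same route as the paper: the spectral sequence is the one associated to the column-wise filtration $\tau^{-k}Tot(A^*(\mathcal{A}))=\bigoplus_{r(q)\leq k}A^*(\mathfrak{L},\hat{\mathcal{C}}(\mathcal{A}))_q$, its $E_1$-page is identified with $A^*(\mathfrak{L},H^*(\hat{\mathcal{C}}(\mathcal{A})))\cong A^*(\mathfrak{L},H^*(\mathcal{C}(\mathcal{A})))$ via Remark~\ref{diff presheaf}, and convergence as algebras to $H^*(\mathcal{M}(\mathcal{A}))$ comes from Theorem~\ref{main alg}. The ``obstacle'' you flag about multiplicativity at $E_1$ is in fact not an extra verification: once the Leibniz laws of Proposition~\ref{Leibniz} make $Tot(A^*(\mathcal{A}))$ a filtered DGA, the standard machinery (Proposition~\ref{ss of alg}, i.e.\ McCleary's Theorem~2.14) guarantees that every page inherits the correct product and that $d_r$ is a derivation, so no separate bookkeeping is needed.
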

From now on, we always use $H^*(\mathcal{A})$ to denote $H^*(\mathcal{C}(\mathcal{A}))$ for convenience.

\vskip .2cm

If $M$ and every $N_i\in \mathcal{C}$ are complex smooth varieties, we do a little modification on $\mathcal{C}(\mathcal{A})$ to a new complex $\mathcal{K}(\mathcal{A})$, such that $\mathcal{K}(\mathcal{A})$ is  a presheaf of mixed Hodge complex and has the same cohomology as $H^*(\mathcal{A})$. Then we have
\begin{thm}\label{mix hodge}
	The model $Tot(A^*(\mathfrak{L},\mathcal{K}(\mathcal{A})))$ is a mixed Hodge complex and
	$$H^*(Tot(A^*(\mathfrak{L},\mathcal{K}(\mathcal{A}))))\cong H^*(\mathcal{M}(\mathcal{A}))$$ with mixed Hodge structure.
\end{thm}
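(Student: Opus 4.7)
The plan is to start from the main theorem together with Corollary \ref{main spec}, which already identifies $H^*(Tot(A^*(\mathcal{A})))$ with $H^*(\mathcal{M}(\mathcal{A}))$ as algebras via a spectral sequence whose $E_1$ page is the generalized OS-algebra $A^*(\mathfrak{L},H^*(\mathcal{C}(\mathcal{A})))$. Since $\mathcal{K}(\mathcal{A})$ is constructed to be a presheaf of mixed Hodge complexes with $H^*(\mathcal{K}(\mathcal{A}))\cong H^*(\mathcal{A})$, the underlying module/algebra isomorphism in Theorem \ref{mix hodge} will follow by substituting $\mathcal{K}$ for $\hat{\mathcal{C}}$ in the same spectral sequence argument; the substantive content of the theorem is therefore the mixed Hodge enrichment, so I would concentrate on that.

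First I would equip each summand $A^*([0,p])\otimes \mathcal{K}(\mathcal{A})(p)$ with a weight filtration $W$ and a Hodge filtration $F$. On the $\mathcal{K}$-factor these are the given MHC filtrations coming from the smooth variety $M_p$ (intersection stratum) and the normal crossing divisor traced out on it by the remaining elements of $\mathcal{A}$. On the OS-factor $A^*([0,p])$, whose generators correspond to codimension-one inclusions in the local arrangement at $p$, I would assign each OS-generator to be of pure Tate type with weight $2$ and Hodge type $(1,1)$, shifted upward from the weight and Hodge filtrations pulled back from $\mathcal{K}$ (this mirrors Dupont's grading for hypersurface arrangements). Tensoring gives a bifiltered $\mathbb{C}$-complex and a filtered rational structure on each summand; the direct sum over $p\in\mathfrak{L}$ then carries $W$ and $F$ on $Tot(A^*(\mathfrak{L},\mathcal{K}(\mathcal{A})))$.

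Next I would verify that both differentials of the double complex are morphisms of mixed Hodge complexes. The vertical differential $\delta$ is inherited from the MHC structure of $\mathcal{K}(\mathcal{A})(p)$, so this is essentially by construction. The horizontal differential $\partial$ is the OS-style combinatorial differential combined with the restriction/Gysin maps $\mathcal{K}(\mathcal{A})(p)\to \mathcal{K}(\mathcal{A})(q)$ for $p\le q$; here I would argue that these are geometric restriction (or Gysin) maps between closed smooth subvarieties with relative normal crossing divisors, hence automatically strict for $W$ and $F$ up to the standard Tate twist that matches the weight shift I built into $A^*([0,p])$. Combining these observations with the fact that taking a total complex of an MHC double complex preserves the MHC property gives the first assertion of Theorem \ref{mix hodge}.

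For the algebra identification with its canonical MHS, I would appeal to uniqueness: the comparison quasi-isomorphism of Theorem 1.1 (applied after passing from $\hat{\mathcal{C}}$ to $\mathcal{K}$ using that these presheaves have isomorphic cohomology) is induced by concrete geometric maps at the stratum level (pullback along open inclusions and connecting integrals/Thom isomorphisms along the geometric filtration associated with $\mathcal{A}$). These geometric maps are themselves morphisms of mixed Hodge complexes, so the induced isomorphism on cohomology is a morphism of MHS; by Deligne's uniqueness theorem it must coincide with the canonical MHS on $H^*(\mathcal{M}(\mathcal{A}))$. The hardest step, and the one I would spend the most care on, is verifying the strictness and weight-shift for the horizontal differential $\partial$, since the OS-combinatorial signs must be shown to assemble into a genuine morphism of mixed Hodge complexes rather than merely a filtered map; this is where the precise Tate twist attached to each OS-generator is essential, and it is the place where Dupont's hypersurface model has to be adapted to the higher-codimension and non-algebraic features allowed by a general manifold arrangement with $\mathcal{K}$-model.
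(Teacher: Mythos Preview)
Your proposal has a genuine conceptual gap: you have the wrong model for $\mathcal{K}(\mathcal{A})$ and, as a consequence, the wrong filtrations and the wrong focus on what is difficult.

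In the paper, $\mathcal{K}(\mathcal{A})_p$ is \emph{not} built from the stratum $M_p$ together with a normal crossing divisor; it is a mixed telescope cone
\[
\mathcal{K}(\mathcal{A})_p = TCone\big(K(M)\to\cdots\to K(F_p^iM)\to\cdots\to K(M-M_p)\big)[-1],
\]
so that $H^*(\mathcal{K}(\mathcal{A})_p)\cong H^*(M,M-M_p)$, and the structure maps $f_{p,q}$ are ordinary inclusions induced by $F_q^iM\subset F_p^iM$, not Gysin maps. The filtrations on $Tot(A^*(\mathfrak{L},\mathcal{K}(\mathcal{A})))$ are then defined by
\[
W_mTot=\bigoplus_p A^*([0,p])_p\otimes W_{m-r(p)}\mathcal{K}(\mathcal{A})_p,\qquad
F_mTot=\bigoplus_p A^*([0,p])_p\otimes F_m\mathcal{K}(\mathcal{A})_p,
\]
i.e.\ the OS-factor shifts $W$ by $r(p)$ and does \emph{not} shift $F$ at all. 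Your Tate-type assignment of weight $2$ and Hodge type $(1,1)$ per generator is the Dupont hypersurface picture and would be incorrect here, where codimensions are arbitrary. With the paper's shift, one checks immediately that $\partial(W_m)\subset W_{m-1}$ (since $f_{p,p_i}$ preserves $W$ and $r(p_i)=r(p)-1$), so $\partial$ \emph{vanishes} on $Gr^W$; the MHC axiom then reduces to the fact that each $A^*([0,p])_p\otimes Gr^W_{m-r(p)}\mathcal{K}(\mathcal{A})_p$ is a Hodge complex of weight $m$. Thus the step you flagged as ``hardest'' is in fact trivial, and no strictness argument for Gysin-type maps is needed.

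The actual work lies elsewhere: one must first construct $\mathcal{K}(\mathcal{A})$ so that it is a presheaf of MHCs \emph{and} so that the comparison with $H^*(\mathcal{M}(\mathcal{A}))$ is realised by a genuine morphism of MHCs. The naive choice $Cone(K(M)\to K(M-M_p))[-1]$ fails precisely because the quotient map of Corollary~\ref{main cor} is then not a chain map; this is why the telescope cone over the whole geometric filtration $F_p^*M$ is introduced. One then defines a filtration $F^i\mathcal{K}(\mathcal{A})$ parallel to $F^i\mathcal{C}(\mathcal{A})$, reruns the sky-scraper/$E_1$ argument of Theorem~\ref{main} verbatim, and obtains a quotient morphism of MHCs
\[
Tot(A^*(\mathfrak{L},\mathcal{K}(\mathcal{A})))\longrightarrow Cone\big(K(F^{1}_{\mathbf{0}}M)\to K(M)\big)[-1]
\]
inducing the desired isomorphism with mixed Hodge structure. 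Your outline does not contain this construction or this argument, and the spectral-sequence substitution you propose does not by itself produce a map of MHCs.
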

\begin{cor} 
If  $M$ is projective and using $\mathbb{Q}$ coefficients, then
	$$Gr_n^WH^{n-i}(\mathcal{M}(\mathcal{A}))\cong H^{-i}(A^*(\mathfrak{L},H^{n}(\mathcal{A})),\partial)$$ and $$H^*(\mathcal{M}(\mathcal{A}))  \cong H^{-*}(A^*(\mathfrak{L},H^{*}(\mathcal{A})),\partial)$$ as algebras (probably with different mixed Hodge structures).
\end{cor}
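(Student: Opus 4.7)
The plan is to deduce the corollary from Theorem \ref{mix hodge} via the degeneration of the weight spectral sequence, which is forced by the purity provided by the projectivity hypothesis. First, I would observe that if $M$ is projective and every $N_i$ is a closed smooth subvariety, then every clean intersection $X_p := N_{i_1}\cap\cdots\cap N_{i_k}$ indexed by an element $p\in\mathfrak{L}$ is itself a closed smooth projective subvariety of $M$. By Deligne's theorem, each $H^m(X_p;\mathbb{Q})$ is pure of weight $m$; thus $\mathcal{K}(\mathcal{A})$ is a presheaf of pure Hodge complexes at every $p$, with the Gysin/Thom shifts built into $\mathcal{K}(\mathcal{A})$ controlling which weight appears at each bidegree.

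With purity in hand, I would examine the spectral sequence of Corollary \ref{main spec}, which under Theorem \ref{mix hodge} inherits a weight filtration from the mixed Hodge complex structure on $Tot(A^*(\mathfrak{L},\mathcal{K}(\mathcal{A})))$. The key step is to verify that, after the Gysin shifts in $\mathcal{K}(\mathcal{A})$, the $(-i,n)$-bigraded piece of $E_1$ is precisely the weight-$n$ part, so that this really is the weight spectral sequence of $\mathcal{M}(\mathcal{A})$. Once it is identified as such, Deligne's general degeneration theorem gives $E_\infty=E_2$, and extracting the weight-$n$ component of $H^{n-i}(\mathcal{M}(\mathcal{A}))$ produces the first isomorphism $Gr_n^W H^{n-i}(\mathcal{M}(\mathcal{A}))\cong H^{-i}(A^*(\mathfrak{L},H^n(\mathcal{A})),\partial)$.

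For the second formula, I would note that the purity argument shows each graded piece of $H^*(\mathcal{M}(\mathcal{A}))$ is concentrated in a single weight within its total-degree class. With $\mathbb{Q}$ coefficients, the weight filtration then splits (non-canonically) and summing over $n$ yields an additive isomorphism $H^k(\mathcal{M}(\mathcal{A}))\cong\bigoplus_{n-i=k}H^{-i}(A^*(\mathfrak{L},H^n(\mathcal{A})),\partial)$. The multiplicativity of the spectral sequence from Corollary \ref{main spec} upgrades this to an algebra isomorphism. The parenthetical warning about different mixed Hodge structures reflects precisely that the chosen splitting is not canonical, so only the underlying graded algebra structure, not the Hodge-theoretic data, transfers through the isomorphism.

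The main obstacle I anticipate is the weight bookkeeping. One must verify that after the Thom/Gysin shifts in $\mathcal{K}(\mathcal{A})$ the piece $E_1^{-i,n}=A^i(\mathfrak{L},H^n(\mathcal{A}))$ is genuinely pure of weight $n$, rather than weight $n$ offset by a codimension correction that varies across strata of $\mathfrak{L}$. This is the point where the exact construction of $\mathcal{K}(\mathcal{A})$ from Section \ref{manifold arrangement} must be unwound, paralleling the classical identification $E_1^{-p,q}=H^{q-2p}(D^{(p)})$ in Deligne's weight spectral sequence for a normal crossing complement. Once this single-weight purity is pinned down at the $E_1$-page, $E_2$-degeneration and the splitting of the weight filtration are automatic, and the two displayed isomorphisms follow.
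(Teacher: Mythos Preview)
Your overall strategy matches the paper's: purity from projectivity forces degeneration at $E_2$, which yields the first isomorphism, and then one passes from $Gr^W H^*$ back to $H^*$ for the second. However, there is a genuine gap in your argument for the algebra statement.

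You write that the weight filtration ``splits (non-canonically)'' additively and that ``the multiplicativity of the spectral sequence from Corollary~\ref{main spec} upgrades this to an algebra isomorphism.'' This does not follow. Multiplicativity of the spectral sequence only tells you that $E_\infty \cong Gr H^*(\mathcal{M}(\mathcal{A}))$ as \emph{algebras}; it says nothing about whether an arbitrarily chosen additive splitting $H^* \to Gr H^*$ respects products. In general it will not. The paper closes this gap by invoking a specific result of Deligne: for any complex algebraic variety $U$, the Deligne splitting $I^{p,q} = F^p\cap W_{p+q}\cap(\overline{F^q}\cap W_{p+q} + \sum_{j\geq 2}\overline{F^{q-j+1}}\cap W_{p+q-j})$ furnishes an algebra isomorphism $H^*(U) \cong \bigoplus_k Gr_k^W H^*(U)$. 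Without this ingredient, or some substitute producing a multiplicative splitting, your argument only delivers an additive isomorphism.

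There is a second, subtler point you elide. The multiplicative spectral sequence of Corollary~\ref{main spec} arises from the column-wise filtration $\tau^{-*}$ on $Tot(A^*(\mathfrak{L},\hat{\mathcal{C}}(\mathcal{A})))$, whereas the weight filtration lives on the mixed Hodge complex $Tot(A^*(\mathfrak{L},\mathcal{K}(\mathcal{A})))$. These are a priori different filtrations on different models. The paper proves separately (Theorem~\ref{compare thm}) that under the purity hypothesis they induce the \emph{same} filtration on cohomology, so that $Gr^\tau H^* = Gr^W H^*$. Only then can one chain together $E_2 \cong Gr^\tau H^* = Gr^W H^* \cong H^*$ as algebras, with the last step supplied by the Deligne splitting. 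Your phrase ``so that this really is the weight spectral sequence'' gestures at this identification, but it is not automatic and your anticipated ``weight bookkeeping'' at the $E_1$-level does not by itself establish it at the level of induced filtrations on $H^*$.
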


As an application, we further study the cohomology ring of the chromatic configuration space defined as $$ F(M,G) = \{(x_1,...,x_n)\in M^n|(i,j)\in E(G)\Rightarrow x_i\neq x_j\}$$
where $M$ is a smooth manifold without boundary, $G$ is a simple graph on vertex set $[n]=\{1, 2, ..., n\}$ and $E(G)$ is the edge set of $G$. Clearly $F(M,G)$ would be the classical configuration space $F(M,n)$ while $G$ is a complete graph. This concept of $F(M,G)$ was  first introduced by Eastwood and Huggett \cite{Eastwood2007} and many authors used different names, see \cite{Baranovsky2012, Berceanu2017, Wiltshire-gordon, Li2019}, where we prefer to call it the "chromatic configuration space"
 as used in \cite{Li2019} since it is the space of all vertex-colorings of $G$ such that adjacent vertices receive different colors  if we consider $M$ as a space of colors. This generalizes the concepts of graphic arrangements and configuration spaces,  and it is also an example of  manifold arrangements.
 \vskip .2cm
Historically,  F. Cohen \cite{Cohen1995} determined the cohomology ring $H^*(F(\mathbb{R}^n,q))$ for $n\geq 2$. Fulton and MacPherson \cite{Fulton1994} gave a compactification of the configuration space of a algebraic variety   and use that to compute $H^*(F(M,n))$. Kriz \cite{Kriz1994} and Totaro \cite{Totaro1996}  improved Fulton and MacPherson's result by different methods, showing that if $M$ is a smooth complex projective variety,  the rational cohomology ring of $F(M,n)$ is determined by the rational cohomology ring of $M$. The homology group of chromatic space $F(M,G)$ has been studied by Baranovsky and Sazdanovi\'{c} \cite{Baranovsky2012}. The compact support cohomology of $F(M,G)$ was studied by Petersen \cite{Petersen2018}. These works in \cite{Baranovsky2012} and \cite{Petersen2018} actually coincide from the viewpoint of  Poincar\'{e} duality. On the singular cohomology ring of chromatic configuration spaces, it seems that the only work is what Berceanu \cite{Berceanu2017} does do recently, where
Berceanu studied $H^*(F(M,G))$ by Dupont's result \cite{Dupont2013} when $M$ is a Riemann surface. If $M$ is a manifold without boundary, we show in Section \ref{chromatic space} that $F(M,G)$ is the complement of a manifold arrangement $\mathcal{A}_G$.  Then  using the approach developed in this paper, we obtain an immediate corollary about cohomology ring $H^*(F(M,G))$.

\begin{cor}
	$H^*(Tot(A^*(\mathcal{A}_G)))$ is isomorphic to $H^*(F(M,G))$ as algebras, where  $\mathcal{A}_G$ is the associated manifold arrangement such that $\mathcal{M}(\mathcal{A}_G)=F(M,G)$.
\end{cor}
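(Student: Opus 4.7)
The plan is to verify that the corollary follows as a direct application of the main theorem, once we realize $F(M,G)$ as the complement of a bona fide manifold arrangement. First, for each edge $(i,j) \in E(G)$, I would introduce the diagonal submanifold
$$N_{ij} = \{(x_1,\dots,x_n) \in M^n : x_i = x_j\},$$
which is a smooth closed submanifold of $M^n$ of codimension $\dim M$, and set $\mathcal{A}_G = \{N_{ij} : (i,j) \in E(G)\}$. By construction $\mathcal{M}(\mathcal{A}_G) = M^n \setminus \bigcup_{(i,j) \in E(G)} N_{ij} = F(M,G)$, so what remains is to check that $\mathcal{A}_G$ is indeed a manifold arrangement with a locally geometric quasi-intersection poset.

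For any subset $S \subseteq E(G)$, the intersection $\bigcap_{(i,j) \in S} N_{ij}$ consists of tuples $(x_1,\dots,x_n)$ with $x_i = x_j$ whenever $i$ and $j$ lie in the same connected component of the subgraph $([n],S)$. If $\pi_S$ denotes this partition of $[n]$, the intersection is diffeomorphic to the diagonally embedded $M^{|\pi_S|}$. Cleanness of these intersections follows because each $N_{ij}$ is the preimage of the small diagonal of $M \times M$ under the projection $M^n \to M^2$ onto coordinates $i,j$; at any common point one checks directly that the tangent space of the set-theoretic intersection coincides with the intersection of the tangent spaces of the individual $N_{ij}$. Thus $\mathcal{A}_G$ is a manifold arrangement in the sense of Section \ref{notion}.

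Next I would identify the quasi-intersection poset with the bond lattice of $G$. The distinct intersections above correspond precisely to partitions of $[n]$ all of whose blocks induce connected subgraphs of $G$; ordered by refinement, these form the bond lattice $L(G)$. Since $L(G)$ is the lattice of flats of the graphic matroid of $G$, it is geometric, and every interval $[\hat{0},p]$ in it is itself the lattice of flats of a suitable graphic minor and therefore geometric. This establishes local geometricity of the associated poset $\mathfrak{L}$ attached to $\mathcal{A}_G$.

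With these verifications in hand, the corollary is immediate from the main theorem applied to $\mathcal{A}_G$. I expect the only delicate point to be the clean intersection check, but since the $N_{ij}$ arise as preimages of a single diagonal under coordinate projections, this reduces to a local computation in $M^2$ together with elementary linear algebra on tangent spaces; no serious obstacle arises.
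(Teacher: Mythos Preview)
Your proposal is correct and follows essentially the same route as the paper: verify that $\mathcal{A}_G$ is a manifold arrangement in $M^n$ with closed strata and locally geometric quasi-intersection poset (the bond lattice $L_G$), then invoke the main theorem (Theorem~\ref{main alg}). The only minor difference is in how clean intersection is verified: the paper exhibits, around each point $x\in M^n$, a product chart $U_1\times\cdots\times U_n$ in which each $\Delta_a\cap U$ is literally a linear diagonal subspace, matching the paper's Bott-type definition directly; you instead appeal to a tangent-space criterion. Since the paper's definition of manifold arrangement is phrased in terms of local diffeomorphism to a subspace arrangement rather than tangent-space transversality, the product-chart argument is the more immediate one here, but your criterion is equivalent in this setting and the overall structure of the proof is the same.
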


In some special cases, we can express $H^*(Tot(A^*(\mathcal{A}_G)))$ more explicitly .

\begin{cor}\label{mhs}
	Assume that $M$ is a complex projective smooth variety and we use $\mathbb{Q}$ as coefficients. Then
	$$Gr_n^WH^k(F(M,G)) \cong H^{-(n-k)}(A^*(L_G,H^{n}(\mathcal{A}_G)),\partial)$$
	and $$H^*(F(M,G)) \cong H^{-*}(A^*(L_G,H^{n}(\mathcal{A}_G)),\partial)$$ as algebras.
\end{cor}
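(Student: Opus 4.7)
The plan is to obtain this as an immediate specialization of the preceding mixed-Hodge corollary applied to the specific manifold arrangement $\mathcal{A}_G$ whose complement is $F(M,G)$. Thus the task reduces to verifying that $\mathcal{A}_G$ fulfills the hypotheses of that corollary and then performing a straightforward substitution of notation.

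First I would recall from Section~\ref{chromatic space} that $\mathcal{A}_G$ is the collection of diagonals $N_{ij}=\{(x_1,\dots,x_n)\in M^n \mid x_i=x_j\}$, one per edge $(i,j)\in E(G)$, whose quasi-intersection poset is $L_G$ and whose complement is precisely $F(M,G)$. When $M$ is complex projective and smooth, so is $M^n$, and each $N_{ij}$ is a closed complex smooth subvariety isomorphic to $M^{n-1}$. Every finite intersection $N_{i_1j_1}\cap\cdots\cap N_{i_kj_k}$ is cut out by linear diagonal equations, so it is indexed by a partition of $[n]$ (the one produced by the connected components of the subgraph on $\{(i_\ell,j_\ell)\}$), is again a complex smooth subvariety of $M^n$, and the intersection is clean. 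Hence $\mathcal{A}_G$ is a manifold arrangement in the complex projective smooth ambient $M^n$ all of whose members are complex smooth subvarieties, which is exactly the hypothesis of the preceding corollary.

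Substituting $\mathfrak{L}\mapsto L_G$, $\mathcal{A}\mapsto \mathcal{A}_G$, and $\mathcal{M}(\mathcal{A})\mapsto F(M,G)$ in the preceding corollary yields both claimed isomorphisms, with the index identification $k=n-i$ giving the graded statement $Gr_n^W H^k(F(M,G))\cong H^{-(n-k)}(A^*(L_G,H^n(\mathcal{A}_G)),\partial)$ and the algebra isomorphism $H^*(F(M,G))\cong H^{-*}(A^*(L_G,H^n(\mathcal{A}_G)),\partial)$. All the essential content is already supplied by Theorem~\ref{mix hodge} and its corollary, together with the $E_1$-degeneration of the associated weight spectral sequence in the projective case (which is what promotes the isomorphism at the level of $Gr^W$ to an ungraded algebra isomorphism after forgetting the Hodge filtration).

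The main obstacle, such as it is, is not really in the proof of this corollary itself but in confirming that the hypotheses feed correctly through the earlier results: one must check that $M^n$ inherits projectivity and smoothness (immediate), that the intersection poset $L_G$ genuinely matches the $\mathfrak{L}$ built from $\mathcal{A}_G$ (routine once $\mathcal{A}_G$ has been identified), and that the notational convention $H^n(\mathcal{A}_G)$ in the target is parsed as in the preceding corollary so that the weight-filtered and unfiltered statements are compatible.
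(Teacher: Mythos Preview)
Your proposal is correct and matches the paper's own approach: in Section~6.2 the paper restates this as Corollary~\ref{explicit chro alg} and proves it in one line by combining Corollary~\ref{alg arg}, Theorem~\ref{ring}, and the construction of $\mathcal{A}_G$ from the preceding subsection. Your additional verification that $M^n$ and the diagonals $N_{ij}$ inherit the requisite algebro-geometric properties is a useful sanity check that the paper leaves implicit.
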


\begin{cor}\label{ass}
	Assume that the diagonal cohomology class\footnote{See Section \ref{explicit result}} of $M^2$ is zero and we use
	$\mathbb{Z}_2$ as coefficients ($M$ may not be a variety). Then $A^*(L_G,H^*(\mathcal{A}_G))$ is completely determined by $G$ and $H^*(M)$, and there exists a filtration of $H^*(F(M,G))$ such that $Gr(H^*(F(M,G)))$ is isomorphic to $A^*(L_G,H^*(\mathcal{A}_G))$ as algebras. See Theorem \ref{str of presheaf} for an explicit algebra structure of $A^*(L_G,H^*(\mathcal{A}_G))$
\end{cor}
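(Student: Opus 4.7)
The plan is to invoke Corollary \ref{main spec} for the manifold arrangement $\mathcal{A}_G$: this supplies a spectral sequence of algebras with $E_1^{*,*}\cong A^*(L_G, H^*(\mathcal{A}_G))$ converging to $H^*(F(M,G))$. If I can show that this spectral sequence degenerates at $E_1$ as an algebra, then taking the filtration on $H^*(F(M,G))$ to be the induced spectral-sequence filtration yields $Gr(H^*(F(M,G)))\cong E_\infty \cong E_1 \cong A^*(L_G, H^*(\mathcal{A}_G))$ as algebras, which is exactly the second claim.

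To verify that $A^*(L_G, H^*(\mathcal{A}_G))$ is determined by $G$ and $H^*(M)$ alone, I would unpack the arrangement explicitly. The members of $\mathcal{A}_G$ are the graph diagonals $\Delta_{ij}=\{x_i=x_j\}\subset M^n$ for $(i,j)\in E(G)$, and a nonzero element of $L_G$ corresponds to a $G$-admissible partition of $[n]$ (one whose blocks induce connected subgraphs of $G$); the intersection indexed by such a $p$ is diffeomorphic to a product of copies of $M$ indexed by the blocks. By K\"unneth, the presheaf values $H^*(\mathcal{A}_G)(p)$ are tensor powers of $H^*(M)$, and the restriction maps are pullbacks along coordinate projections between such products. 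Since the OS-algebra $A^*(L_G)$ depends only on the lattice $L_G$, which is itself determined by $G$, the whole algebra $A^*(L_G, H^*(\mathcal{A}_G))$ is manifestly built from $G$ together with $H^*(M)$.

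For degeneration, the heart of the argument is to analyze the $E_1$-differential $\partial$. It is a Gysin/transgression-type boundary between cohomologies of strata of adjacent ranks, and locally each elementary piece factors through the restriction to a diagonal followed by multiplication by the Poincar\'e-dual class $[\Delta]\in H^*(M^2;\mathbb{Z}_2)$ of the diagonal in the corresponding $M^2$-factor. The hypothesis $[\Delta]=0$ then forces $\partial\equiv 0$. Higher differentials $d_r$ ($r\geq 2$) are iterated compositions of operators of the same type, each picking up a diagonal class of some $M^2$-factor, so they vanish as well. Hence $E_1=E_\infty$ as algebras, and the claim follows.

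The principal obstacle is to pin down the precise local formula for $\partial$ (and the higher $d_r$) and identify the operators occurring in them as multiplications by the diagonal classes. Concretely, one must trace through the construction of $\mathcal{C}(\mathcal{A}_G)$ and the geometric filtration used to build the spectral sequence, and show that each elementary piece of the transgression indeed factors through the pullback-push along a diagonal inclusion $\Delta\hookrightarrow M^2$; once this identification is made, the vanishing of $[\Delta]$ kills every $d_r$ and the algebra-degeneration conclusion is immediate.
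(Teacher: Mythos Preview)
Your overall plan---invoke the multiplicative spectral sequence and show it degenerates at $E_1$---matches the paper. Your argument that $d_1=\partial$ vanishes is also essentially the paper's: in the proof of Theorem~\ref{str of presheaf} one checks directly that each structure map $f^*_{p,q}$ of the presheaf $H^*(\mathcal{A}_G)$ factors through the diagonal cohomology class and is therefore zero, so $\partial=0$ on $E_1$. (A small correction: $H^*(\mathcal{A}_G)_p=H^*(M^n,M^n-\Delta_p)$, not $H^*(\Delta_p)$; you must first apply the Thom isomorphism before K\"unneth, but then your description of the presheaf in terms of $G$ and $H^*(M)$ goes through.)

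The genuine gap is your treatment of the higher differentials. You assert that $d_r$ for $r\geq 2$ are ``iterated compositions of operators of the same type, each picking up a diagonal class,'' and hence vanish. This is not how higher differentials in a spectral sequence work: $d_r$ is a secondary operation defined only on $\ker d_{r-1}/\operatorname{im} d_{r-1}$ via a zig-zag, and it has no general description as a composite of $d_1$-type maps. There is no reason a priori that $d_r$ should factor through a diagonal class, and your argument for $r\geq 2$ does not stand.

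The paper handles the higher differentials by a completely different mechanism, namely the \emph{multiplicative} degeneration criterion of Theorem~\ref{DG1}. Once $\partial=0$ one has $E_2=E_1=A^*(L_G,H^*(\mathcal{A}_G))$, and Theorem~\ref{str of presheaf} exhibits this algebra as generated over $H^*(M)^{\otimes n}$ by the classes $e_{ij}$; in other words, $E_2$ is generated as an algebra by $E_2^{0,*}$ and $E_2^{-1,*}$. Since $d_r$ raises the first index by $r\geq 2$, it kills these generators for target-vanishing reasons, and then the Leibniz rule forces $d_r\equiv 0$ on all of $E_r$. This is the step you are missing; replace your ``iterated composition'' claim with this generation-plus-Leibniz argument and the proof is complete.
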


By calculating the Poincar\'{e} polynomial of each algebra, we have
\begin{cor} With the same assumption as in Corollary~\ref{ass}, let  $G$ be a simple graph on vertex set $[n]$ and $M$ be a manifold of dimension $m$. Then
	$$P(F(M,G))=(-1)^nt^{n(m-1)}\chi_G(-P(M)t^{1-m})$$ where $P(-)$ denotes the $\mathbb{Z}_2$-Poincar\'{e} polynomial with variable $t$, and $\chi_G(t)$ is the chromatic polynomial of $G$.
\end{cor}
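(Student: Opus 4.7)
The strategy is to reduce the computation of $P(F(M,G))$ to computing the Poincar\'e series of the generalized Orlik--Solomon algebra $A^*(L_G, H^*(\mathcal{A}_G))$, and then to recognize the result as the evaluation $(-1)^n t^{n(m-1)} \chi_G(-P(M) t^{1-m})$ by way of the classical Whitney--Rota expansion of the chromatic polynomial.

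Corollary~\ref{ass} supplies a filtration on $H^*(F(M,G))$ whose associated graded algebra is isomorphic to $A^*(L_G, H^*(\mathcal{A}_G))$. Passing to the associated graded preserves Poincar\'e series, so $P(F(M,G)) = P(A^*(L_G, H^*(\mathcal{A}_G)))$ and it suffices to compute the right-hand side. Using the explicit structure from Theorem~\ref{str of presheaf}, I would identify $L_G$ with the poset of $G$-compatible partitions of $[n]$; for such a $p$ with $c(p)$ blocks and rank $r(p) = n - c(p)$, the associated stratum is $N_p \cong M^{c(p)}$, and the generalized OS-algebra decomposes as
$$A^*(L_G, H^*(\mathcal{A}_G)) \;=\; \bigoplus_{p \in L_G} A^{r(p)}([\hat{0}, p]) \otimes H^*(N_p),$$
where the top component $A^{r(p)}([\hat{0}, p])$ has dimension $|\mu(\hat{0}, p)|$ by Orlik--Solomon and each OS-generator at $p$ contributes topological degree $m-1$ (the shift attached to a codimension-$m$ diagonal submanifold). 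Combined with the K\"unneth isomorphism $H^*(N_p) \cong H^*(M)^{\otimes c(p)}$, this yields
$$P(A^*(L_G, H^*(\mathcal{A}_G))) \;=\; \sum_{p \in L_G} |\mu(\hat{0}, p)|\, P(M)^{c(p)}\, t^{r(p)(m-1)}.$$

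To finish, I would apply the Whitney--Rota formula $\chi_G(q) = \sum_{p \in L_G} \mu(\hat{0}, p)\, q^{c(p)}$, substitute $q = -P(M) t^{1-m}$, and multiply by $(-1)^n t^{n(m-1)}$. Using the elementary identities $n(m-1) + c(p)(1-m) = r(p)(m-1)$ and $(-1)^{n+c(p)} = (-1)^{r(p)}$, together with Rota's sign theorem $(-1)^{r(p)} \mu(\hat{0}, p) = |\mu(\hat{0}, p)|$ for the geometric lattice $L_G$, one sees that $(-1)^n t^{n(m-1)} \chi_G(-P(M) t^{1-m})$ collapses to precisely the displayed sum, finishing the identification of the two Poincar\'e polynomials.

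The main obstacle is not the combinatorial bookkeeping, which is routine, but pinning down the correct topological grading on $A^*(L_G, H^*(\mathcal{A}_G))$: one has to verify that each OS-generator at a flat $p$ contributes $t^{m-1}$ (rather than $t$, as in the classical codimension-one case, or $t^m$, as the raw codimension of $N_p$ would naively suggest). Once this normalization is traced through the spectral sequence of Corollary~\ref{main spec} and the filtration of Corollary~\ref{ass}, so that the grading on the associated graded matches the topological grading on $H^*(F(M,G))$, the computation above goes through verbatim.
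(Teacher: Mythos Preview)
Your argument is correct and is essentially the same as the paper's. The paper phrases the computation via a two-variable polynomial $P_{M,G}(s,t)=\sum_{i,j}\dim E_1^{-i,j}s^{-i}t^j$, proves the closed form $P_{M,G}(s,t)=(-1)^n s^{-n}t^{mn}\chi_G(-P(M,t)st^{-m})$ (Lemma~\ref{2var poly}), and then observes that $E_1$-collapse gives $P(F(M,G))=P_{M,G}(t,t)$; your proof is the specialization $s=t$ of that same calculation, carried out directly on the decomposition $A^*(L_G,H^*(\mathcal{A}_G))=\bigoplus_p A^*([\mathbf{0},p])_p\otimes H^*(\mathcal{A}_G)_p$. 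The grading issue you flag as the main obstacle is not really one: the total degree on $A^*([\mathbf{0},p])_p\otimes \mathcal{C}_p^j$ is $j-r(p)$ by definition, and the Thom isomorphism $H^j(M^n,M^n-\Delta_p)\cong H^{j-r(p)m}(\Delta_p)$ immediately gives the $t^{r(p)(m-1)}$ shift you want, so no extra tracing through the spectral sequence is needed.
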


In a more special case, we can overcome the gap between $Gr(H^*(F(M,G)))$ and $H^*(F(M,G))$.

\begin{thm}\label{c}
	Using $\mathbb{Z}_2$ as coefficients, assume that $M$ is a manifold of dimension $m$ such that the diagonal cohomology class of $M^2$ is zero and $H^i(M)=0$ for all $i\geq (m-1)/2$. Then $H^*(F(M,G))$ is isomorphic to $A^*(L_G,H^*(\mathcal{A}_G))$ as algebras.
\end{thm}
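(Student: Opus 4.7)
By Corollary~\ref{ass}, the diagonal-vanishing hypothesis (which Theorem~\ref{c} also assumes) already produces a filtration on $H^*(F(M,G))$ whose associated graded is isomorphic to $A^*(L_G, H^*(\mathcal{A}_G))$ as bigraded algebras; equivalently, the spectral sequence of Corollary~\ref{main spec} collapses at $E_1$ with $\partial = 0$ and no higher differentials surviving. Therefore the real content of Theorem~\ref{c} is that, under the strengthened vanishing $H^i(M)=0$ for $i \geq (m-1)/2$, this filtration splits multiplicatively, and no extension problems remain.

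The plan is to establish purity of the filtration: for every total cohomological degree $T$, at most one piece $\mathrm{Gr}^p H^T(F(M,G))$ is nonzero. Using the decomposition $A^*(L_G, H^*(\mathcal{A}_G)) = \bigoplus_\pi A^*([0,\pi]) \otimes H^*(M^{|\pi|})$ over $G$-admissible partitions $\pi$ of $[N]$, any contribution at bidegree $(p, q)$ coming from a partition $\pi$ with $k = |\pi|$ blocks requires $0 \leq p \leq N - k$ and $0 \leq q \leq k d$, where $d = \max\{i : H^i(M) \neq 0\}$. I will then track the total cohomological degree carefully, taking into account that each OS-atom sits on a codimension-$m$ diagonal in $M^N$ and so contributes a Gysin shift of $m-1$ to the cohomological grading of the target. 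The inequality $d < (m-1)/2$ is exactly the numerical statement needed to force the admissible values of $p$ to collapse to a single one for each fixed $T$. Combined with the $E_1$-collapse provided by Corollary~\ref{ass}, purity automatically upgrades the associated-graded isomorphism to a genuine algebra isomorphism $H^*(F(M,G)) \cong A^*(L_G, H^*(\mathcal{A}_G))$.

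The main obstacle will be the purity estimate itself, in particular nailing down the correct cohomological degree shift for each OS-atom so that the inequality $d < (m-1)/2$ is both necessary and sufficient to rule out interference between neighboring filtration strata. Should purity fail in a few low-degree edge cases, the fallback is to construct an explicit algebra homomorphism $\Phi \colon A^*(L_G, H^*(\mathcal{A}_G)) \to H^*(F(M,G))$ by lifting the generators supplied by Theorem~\ref{str of presheaf}: OS-generators are sent to Gysin/Thom classes of the edge-diagonals $\Delta_{ij} \subset M^N$, and coefficient-type generators are lifted via pullback through the projections $F(M,G) \hookrightarrow M^N \to M$. Each defining relation is then verified directly in $H^*(F(M,G))$, with the vanishing hypothesis used once more to show that any would-be correction term in strictly higher filtration sits in a bidegree whose presheaf component exceeds $k d$ and hence must vanish.
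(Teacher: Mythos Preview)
Your main strategy---purity of the filtration---does not hold under the stated hypotheses. Take $M=S^1\times\mathbb{R}^4$, so $m=5$ and $d=\max\{i:H^i(M)\neq 0\}=1<(m-1)/2=2$. For a partition $\pi$ of rank $k$ (so $|\pi|=n-k$ blocks), the piece $A^*([\mathbf{0},\pi])_\pi\otimes H^*(\mathcal{A}_G)_\pi$ contributes in total degrees ranging over $[(m-1)k,\;(m-1)k+(n-k)d]=[4k,\;4k+(n-k)]$. For adjacent ranks $k$ and $k+1$ these intervals are disjoint only when $(n-k)d<m-1$, i.e.\ $n-k<4$; with $n=10$ and $k=0,1$ the intervals $[0,10]$ and $[4,13]$ overlap substantially. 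So for generic $n$ and $G$ many filtration strata share each total degree, and purity cannot be the mechanism.

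Your fallback runs into the same numerics. Lifting the cycle relation $\sum_s e_{i_1i_2}\cdots\widehat{e_{i_si_{s+1}}}\cdots e_{i_ki_1}$ (which lives at rank $k-1$ and total degree $(k-1)(m-1)$) to $H^*(F(M,G))$, the possible correction sits at some rank $k'\le k-2$; its coefficient in $H^*(\Delta_{p'})\cong H^*(M^{n-k'})$ then has degree $(m-1)(k-1-k')$, and you would need this to exceed $(n-k')d$ to force vanishing. With $d$ just below $(m-1)/2$ this requires $k'<2k-2-n$, which is vacuous for any cycle of length $k\le n$. So the correction terms are not killed by degree alone.

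What the paper does instead is an induction on $|E(G)|$, using the sub-arrangement comparison (Proposition~\ref{sub arrangement}): for $x_p,x_q$ with $p\vee q<\mathbf{1}$, both elements and their product already live in $A^*([\mathbf{0},p\vee q],H^*(\mathcal{A}_{G|p\vee q}))$, and $G|p\vee q$ has strictly fewer edges than a connected $G$, so compatibility with the product is inherited from the inductive hypothesis. Only the single case $p\vee q=\mathbf{1}$ requires a degree estimate, and there the product has total degree $\ge(n-1)(m-1)$ while any correction at rank $k\le n-2$ has total degree $<(m-1)k+(n-k)(m-1)/2=(m-1)(n+k)/2\le(m-1)(n-1)$. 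The hypothesis $d<(m-1)/2$ is calibrated to exactly this top-versus-everything-below comparison, not to adjacent strata; your proposal misses the inductive reduction that makes this single estimate sufficient.
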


\noindent \textbf{Comparation to known results.}
\begin{itemize}
	\item  If we forget the multiplicative structure of spectral sequence in Corollary \ref{main spec}, then it is equivalent to that constructed by Tosteson \cite{Tosteson2016}. We write it separately in Corollary \ref{immed cor} because the proof is elementary and can be extended to prove our main Theorem \ref{main alg} with a multiplicative structure involved. In \cite{Petersen2017}, Petersen also gives a spectral sequence that converges to 
	compactly supported cohomology of top stratum of a stratified space, related to Tosteson's result by Verdier duality.
	
	\item Theorem~\ref{c} agrees with the classical result of $H^*(F(\mathbb{R}^m,n))$.
	\item Using the proof process of Theorem~\ref{c}, we may reprove Orlik-Solomon's result for the complement of complex hyperplane arrangements.
\item We may reprove the Zaslavsky's result \cite{Zaslavsky1975} for the $f$-polynomial of hyperplane arrangements in $\mathbb{R}^d$ (see Corollary \ref{f-poly}).
	\item Theorem \ref{main alg} largely generalizes Dupont's result \cite{Dupont2013} to general submanifolds without the hypothesis of codimension one and being algebraic. Furthermore, Theorem \ref{ring}
 coincides with Dupont's result if we restrict to the case of  algebraic hypersurface arrangements.
	\item Let $G$ be a complete graph on $[n]$. Then Corollary~\ref{explicit chro alg} also agrees with the Kriz \cite{Kriz1994} and Totaro \cite{Totaro1996}'s model (denoted it by $E(n)$) of $F(M,n)$ for smooth complex projective variety $M$.
	\item We may reprove Eastwood and Huggett's result \cite{Eastwood2007} for  Euler characteristic of $F(M,G)$ by the spectral sequence in Theorem~\ref{SS of chromatic}.  Corollary 1.5 extends this result to the Poincar\'{e} polynomial in a special case.
	\item Baranovsky and Sazdanovi\'{c} \cite{Baranovsky2012} defined a complex, denoted it by $\mathcal{C}_{BS}(G)$, as $E_1$ page of a spectral sequence converge to  $H_*(F(M,G))$. Recently, M. B\"{o}kstedt and E. Minuz \cite{Bokstedt2019} studied the relation between $\mathcal{C}_{BS}(G)$ and Kriz-Totaro's model $E(n)$,  defined a dual $\mathcal{C}_{BS}(G)^*$ and generalized $E(n)$ to a model $R_n(A,G)$. They  proved that $\mathcal{C}_{BS}(G)^*$ and $R_n(A,G)$ are quasi-isomorphic, and $\mathcal{C}_{BS}(G)^*$ is $E_1$ page of a spectral sequence converge to $H^*(F(M,G))$ as modules,  giving a question {\em whether the spectral sequence has a product structure}. Our work affirmatively answers this question. As discussed in Section 5, our spectral sequence is equipped with a product structure and the $E_1$ page of  spectral sequence in Theorem~\ref{str of presheaf} coincides with $R_n(A,G)$ as algebras.  The product structure of our spectral sequence seems to be new.
\end{itemize}

This paper is organized as follows.    In Section 2, we review the concepts of manifold arrangements and geometric lattice, and introduce our concept of "momoidal presheaf". In Section 3, we introduce some filtration induced by manifold arrangements, and use it to prove our main theorem of module structure and discuss the mixed Hodge complex structure of our model in Section 4. In Section 5, we discuss the product structure on our construction, and then prove an algebra version of our main theorem. We introduce the chromatic configuration space in Section 6.  Then, as an application of main result, we give some explicit formulas in some special cases. In appendix, we review some results for the spectral sequence of filtrated differential algebra, and then give the proof of Theorem \ref{str of presheaf}.

\vskip .2cm
 Throughout Sections 2--5 of this paper, we always choose  a commutative ring $R$ with unit  as default coefficients.

\section{Preliminaries}\label{notion}

\subsection{Geometric lattice}
Here we are only concerned with finite posets.
A lattice is a poset $L$ in which any two elements have both a supremum (join $\vee$), and an infimum (meet $\wedge$).
Denote the unique minimal element (resp. maximal element) by $\mathbf{0}$ (resp. $\mathbf{1}$).
A lattice is {\em ranked} if all maximal chains between two
elements have the same length. By
$r(p)$ we denote the length between $\mathbf{0}$ and $p$, which is called  the rank of $p$. Of course, $r(\mathbf{0})=0$. An element of rank $1$ is called an {\em atom}. If $p>q$ and $r(p)=r(q)+1$, then we say that $p$ {\em covers} $q$, written as $p:>q$ or $q<:p$.
A {\em geometric lattice} (or {\em matroid lattice}) is a ranked lattice subject to
$$r(p\wedge q)+r(p\vee q) \leq r(p)+ r(q)$$ and every element in $L-\{\mathbf{0}\}$ is the
join of some set of atoms. For more details about lattices and geometric lattices, see \cite{Gratzer1996}.

\begin{example}\label{ex graph}
	Let $G$ denote a undirected simple graph with vertex set $[n]=\{1, ..., n\}$.
A spanning subgraph of $G$ is the subgraph such that its vertex set is $[n]$ and its edge set is a subset of all edges of $G$. We use $L_G$ to denote the lattice consisting of all partitions of vertices  induced by connected components of spanning subgraphs of $G$, so $\mathbf{0}\in L_G$ is the partition containing every single vertex and $\mathbf{1}\in L_G$ is the partition induced by connected components of $G$. It is well-known that this $L_G$ is a geometric lattice, also called the bond-lattice by Rota \cite{Rota1964}. For any partition $p\in L_G$, by $|p|$ we denote the length of $p$ as a partition, so $r(p)=n-|p|$.  There is a well-known relation between the M\"{o}bius function $\mu$ of $L_G$ and the chromatic polynomial $\chi_G(t)$ of graph $G$ as follows: $$\chi_G(t)=\sum_{p\in L_G} \mu(0,p)t^{n-r(p)}$$
where the M\"{o}bius function $\mu$ is an integer valued function on $L_G\times L_G$ defined recursively by $\mu(p, p)=1$, $\sum_{p\leq l\leq q}\mu(p,l)=0$ if $p<q$ and $\mu(p,q)=0$ otherwise.
 Given a partition $p\in L_G$, we can remove those edges in $G$ of  crossing different components of $p$,  and then get a subgraph of $G$,  denoted by $G|p$. It is easy to check that  the associated lattice $L_{G|p}$ of this subgraph is the interval $[0,p]\subset L_G$. This fact will be used in Section \ref{chromatic space}.
\end{example}

The following property is important for geometric lattice, see \cite[Chapter IV]{Gratzer1996}.
\begin{prop}\label{geo-lattice}
	Let $L$ be a geometric lattice. Then every interval $[a,b]$ of $L$ is also a geometric lattice.
\end{prop}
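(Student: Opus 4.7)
The plan is to verify each of the three defining properties of a geometric lattice (sublattice structure, rankedness with the semimodular inequality, and atomic generation) for the interval $[a,b]$, using the candidate rank function $r_{[a,b]}(x) := r(x) - r(a)$ inherited from $L$.

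First I would note that $[a,b]$ inherits the lattice operations of $L$: for $p,q \in [a,b]$ one has $a \leq p\wedge q$ and $p\vee q \leq b$, so meets and joins computed in $L$ land back inside $[a,b]$. Moreover, every maximal chain in $[a,b]$ is a maximal chain in $L$ with endpoints $a$ and $b$, so the rankedness of $L$ passes to $[a,b]$, and $r_{[a,b]}$ is its genuine rank function with $r_{[a,b]}(a) = 0$.

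Second, semimodularity for $[a,b]$ is immediate by subtracting $2r(a)$ from both sides of the semimodular inequality in $L$:
\begin{align*}
r_{[a,b]}(p\wedge q) + r_{[a,b]}(p\vee q) &= r(p\wedge q) + r(p\vee q) - 2r(a) \\
&\leq r(p) + r(q) - 2r(a) = r_{[a,b]}(p) + r_{[a,b]}(q).
\end{align*}

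The substantive step is the atomic-generation property, and this is where I would spend the most care. The atoms of $[a,b]$ are exactly the elements that cover $a$, so I need to write any $x \in [a,b]$ with $x > a$ as a join of such covers. My strategy is to first invoke the geometric hypothesis on $L$ to decompose $x = e_1 \vee \cdots \vee e_m$ with each $e_i$ an atom of $L$, and then to prove the sublemma: for any atom $e$ of $L$, the element $a \vee e$ either equals $a$ or covers $a$. This sublemma is where the semimodular inequality does the real work, since
$$r(a\vee e) \;\leq\; r(a) + r(e) - r(a\wedge e) \;\leq\; r(a) + 1,$$
forcing $r_{[a,b]}(a\vee e) \in \{0,1\}$. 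Granted the sublemma, associativity of join gives $x = x \vee a = \bigvee_{i=1}^m (a\vee e_i)$, and discarding the indices where $a\vee e_i = a$ exhibits $x$ as a join of atoms of $[a,b]$. The main (and essentially only) obstacle is this covering sublemma, but it reduces transparently to one application of the semimodular inequality already available in $L$.
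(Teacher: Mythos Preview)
Your argument is correct and self-contained. The paper, by contrast, does not supply a proof of this proposition at all: it treats the result as a standard fact about geometric lattices and simply refers the reader to \cite[Chapter~IV]{Gratzer1996}. Your verification of each axiom---sublattice closure, semimodularity via the shifted rank $r_{[a,b]}(x)=r(x)-r(a)$, and atomic generation through the covering sublemma $r(a\vee e)\le r(a)+1$---is the standard argument found in lattice-theory references, so you are effectively filling in the content that the paper only cites. One small point worth making explicit: since each atom $e_i$ in your decomposition satisfies $e_i\le x\le b$, the elements $a\vee e_i$ automatically lie in $[a,b]$, so they are genuinely atoms of the interval and not merely of $[a,\mathbf{1}]$.
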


For  some elements $p_i$ of $L$,  $r(\vee_i p_i)\leq \sum_i r(p_i)$ by definition of geometric lattice. We say that the $p_i$'s are {\em independent} if $r(\vee_i p_i)= \sum_i r(p_i)$ and {\em dependent} otherwise. Here is an easy lemma we will use later.
\begin{lem}
	(i) If $a$ is an atom and $p\in L$ with $a\nleq p$ and $p\neq \mathbf{0}$, then $a,p$ are independent, i.e.,  $r(p\vee a)=r(p)+1$.
	(ii) Every element $p\neq \mathbf{0}$ is the join of some independent atoms $a_i$, $1\leq i\leq r(p)$. (iii) For dependent $p,q$ and write $q=\vee a_i$ as join of independent atoms, there exists an integer $s$ such that $a_s \leq p\vee a_1\vee a_2\vee\cdots \vee a_{s-1}$.
\end{lem}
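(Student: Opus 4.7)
The three parts are standard consequences of the submodular rank inequality together with the defining property that every non-$\mathbf{0}$ element is a join of atoms, so I would attack them in order, with each building on the previous.

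For (i), the plan is to combine submodularity with the fact that atoms have no nontrivial element below them. Since $a$ is an atom and $a\nleq p$, the meet $p\wedge a$ is strictly below $a$, hence must equal $\mathbf{0}$, so $r(p\wedge a)=0$. Submodularity then gives $r(p\vee a)\leq r(p)+r(a)=r(p)+1$. On the other hand, $a\nleq p$ means $p\vee a$ strictly exceeds $p$, so $r(p\vee a)\geq r(p)+1$. Equality follows; there is no real obstacle here.

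For (ii), I would construct the independent family greedily. By the defining property of a geometric lattice, write $p=b_1\vee\cdots\vee b_m$ for some atoms $b_j$. Build a subsequence $a_1,a_2,\ldots$ by starting with $a_1=b_1$ and, at each step, appending the next $b_j$ precisely when $b_j\nleq a_1\vee\cdots\vee a_{i}$. By part (i) applied inductively (with $p$ replaced by the running join, which is non-$\mathbf{0}$ after the first atom is added), each such appending strictly raises the rank by exactly $1$. The final join is still $p$ (the skipped atoms are already below what we have), and its rank equals the number of atoms chosen; by definition this means the chosen atoms are independent and their count equals $r(p)$.

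For (iii), the idea is to evaluate ranks along the chain
\[
p \;=\; p\vee(\text{empty}) \;<\;p\vee a_1\;\leq\;p\vee a_1\vee a_2\;\leq\;\cdots\;\leq\;p\vee a_1\vee\cdots\vee a_k \;=\; p\vee q,
\]
where $k=r(q)$. If every inclusion were strict with rank jump exactly $1$, iterating part (i) would give $r(p\vee q)=r(p)+k=r(p)+r(q)$, contradicting dependence of $p,q$. Hence at some step $s$ the rank fails to rise, which by part (i) is only possible if $a_s\leq p\vee a_1\vee\cdots\vee a_{s-1}$. The mild subtlety (and the only step I would flag as requiring a touch of care) is making sure part (i) is applicable at each step: the ambient join $p\vee a_1\vee\cdots\vee a_{s-1}$ is non-$\mathbf{0}$ because $p\neq\mathbf{0}$, so the hypothesis of (i) is satisfied throughout. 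This pinpoints the desired $s$ and finishes the proof.
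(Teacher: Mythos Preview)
Your proof is correct and follows essentially the same route as the paper: submodularity plus the atom property for (i), a greedy selection among a generating set of atoms for (ii), and a rank-counting contradiction along the chain $p\vee a_1\vee\cdots\vee a_s$ for (iii). Your treatment is in fact slightly more careful than the paper's in flagging why the running join in (iii) is non-$\mathbf{0}$; note that this follows automatically since dependence of $p,q$ forces $p\neq\mathbf{0}$ (otherwise $r(p\vee q)=r(q)=r(p)+r(q)$).
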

\begin{proof}
	(i) First $r(a\wedge p)+r(a\vee p) \leq r(a)+ r(p)$ by definition.  Since $a$ is an atom and $a\nleq p$, we have that  $a\wedge p=\mathbf{0}$ so $r(a\vee p) \leq r(p)+1$, and  $a\vee p>p$ so $r(a\vee p)>r(p)$. Thus, $r(p\vee a)=r(p)+1$.
	(ii) Clearly $p$ is always the join of some atoms $\{a_i\}$ by definition. Remove the elements $a_s$ with $a_s\leq a_1\vee a_2\vee\cdots \vee a_{s-1}$, we get the required independent set of atoms by (i). (iii) If not, $r(p\vee a_1\vee a_2\vee\cdots \vee a_{s})=r(p)+s$ by induction. Furthermore,  $r(p\vee q)=r(p)+r(q)$, which contradicts to the condition that $p,q$ are dependent.
\end{proof}
\subsection{Locally Geometric lattice}

\begin{defn}\label{locally geometric}
	A poset $\mathfrak{L}$ is {\em locally geometric} if it has a minimal element  $\mathbf{0}$ and the interval $[\mathbf{0},p]$ is a geometric lattice for any $p\in \mathfrak{L}$.
\end{defn}
\begin{rem}
	It is known that the intersection lattice of hyperplane arrangements is a geometric lattice. For more general case,
	the poset of layers (connected components of intersections) of hypersurface arrangements are always locally geometric since these arrangements locally "looks like" hyperplane arrangements. The above definition is a combinatorial abstraction of poset of layers.
\end{rem}
	From now on, we always denote geometric lattice (resp. locally geometric poset) as $L$ (resp. $\mathfrak{L}$) and write $[p,\infty)=\{q\in \mathfrak{L}|q\geq p\}$. The poset $[p,\infty)$ is also locally geometric since interval $[p,q]$ of geometric lattice $[\mathbf{0},p]$ is also geometric lattice.	
\vskip .2cm

There are some easy properties of locally geometric poset.
\begin{prop}\label{canonical map}
	Let $\mathfrak{L}$ be a locally geometric poset.
\begin{enumerate}
	\item Set $r(p)=r([\mathbf{0},p])$ for $p\in \mathfrak{L}$. Then $r(p)$ is a grading of poset $\mathfrak{L}$, where $r([\mathbf{0},p])$ in the right side is the rank of geometric lattice $[\mathbf{0},p]$.
	\item In general, two elements $p,q \in  \mathfrak{L}$ may have more than one minimal upper bounds, we denote this set by $p \mathring{\vee} q$ as "global" join of $p,q$. Notice that for any $s\in p \mathring{\vee} q$, $s=p\vee q$ in lattice $[\mathbf{0},s]$.
	\item Given $p_1,q_1,p_2,q_2\in \mathfrak{L}$ such that $p_2\leq p_1, q_2\leq q_1$, then for every $s\in p_1 \mathring{\vee} q_1$, there is one and only one element $t\in p_2 \mathring{\vee} q_2$ such that $t\leq s$. This property gives us a canonical map $\lambda_{p_1q_1p_2q_2}:p_1 \mathring{\vee} q_1 \rightarrow p_2 \mathring{\vee} q_2$. 	
	
\end{enumerate}
\end{prop}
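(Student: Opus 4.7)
The plan is to reduce each of the three claims to standard facts about the geometric lattice $[\mathbf{0},p]$ (or $[\mathbf{0},s]$) by the observation that any finite collection of elements bounded above by some $s\in\mathfrak{L}$ lives inside the geometric lattice $[\mathbf{0},s]$, where a unique join and a well-behaved rank function are available. All three assertions should then follow from short comparisons between ``local'' computations inside such an interval and ``global'' statements in $\mathfrak{L}$.

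For (1), I would check the grading axiom directly. Suppose $p :> q$ in $\mathfrak{L}$; then $p,q\in[\mathbf{0},p]$ and the covering relation of $\mathfrak{L}$ restricts to the covering relation of $[\mathbf{0},p]$. Since $[\mathbf{0},p]$ is geometric hence ranked, $r_{[\mathbf{0},p]}(p)=r_{[\mathbf{0},p]}(q)+1$. The only subtlety is consistency: one must verify that the rank of $q$ computed in $[\mathbf{0},p]$ equals the rank of $q$ computed in $[\mathbf{0},q]$. This holds because every maximal chain from $\mathbf{0}$ to $q$ lies entirely inside $[\mathbf{0},q]\subseteq[\mathbf{0},p]$, so the two ranked structures assign the same length to such chains.

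For (2), the argument is essentially one line: $s$ is an upper bound of $p$ and $q$, so $p,q\in[\mathbf{0},s]$ and the join $p\vee q$ exists there, with $p\vee q\leq s$; if this inequality were strict then $p\vee q$ would be a strictly smaller upper bound of $p,q$ in $\mathfrak{L}$, contradicting the minimality of $s$. For (3), the main move is to set $t:=p_2\vee q_2$ computed inside the geometric lattice $[\mathbf{0},s]$ (which is legitimate since $p_2\leq p_1\leq s$ and $q_2\leq q_1\leq s$), and then show this $t$ lies in $p_2\mathring\vee q_2$. For minimality in $\mathfrak{L}$, any upper bound $u$ of $p_2,q_2$ with $u\leq t$ belongs to $[\mathbf{0},s]$, whence $t=p_2\vee q_2\leq u\leq t$ and $u=t$. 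For uniqueness, any other $t'\in p_2\mathring\vee q_2$ with $t'\leq s$ is itself an upper bound of $p_2,q_2$ in $[\mathbf{0},s]$, so $t\leq t'$, and the minimality of $t'$ forces $t=t'$; this also defines the canonical map $\lambda_{p_1q_1p_2q_2}$.

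The main obstacle will be keeping the two distinct notions of ``join'' cleanly separated: inside a fixed geometric lattice $[\mathbf{0},s]$ the join is the unique least upper bound, whereas in the ambient poset $\mathfrak{L}$ only \emph{minimal} upper bounds exist and there can be several of them. The key conceptual step in (3) is the transition from ``$t$ is minimal as an upper bound inside $[\mathbf{0},s]$'' to ``$t$ is globally minimal in $\mathfrak{L}$'', which is precisely what the argument above accomplishes by exploiting that any competing global upper bound below $t$ is automatically bounded by $s$ and hence sits in the interval where the local join argument applies.
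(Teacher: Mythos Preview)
Your proof is correct and follows essentially the same approach as the paper: the paper declares (1) and (2) ``obvious'' and for (3) defines $t=p_2\vee q_2$ inside the geometric lattice $[\mathbf{0},s]$, then argues minimality and uniqueness exactly as you do, by pulling any competitor below $s$ into that interval. Your write-up is in fact more careful than the paper's, particularly the consistency check in (1) that the rank of $q$ is the same whether computed in $[\mathbf{0},q]$ or in $[\mathbf{0},p]$.
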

\begin{proof}
(1) and (2) are obvious. It suffices to  prove (3). For any $s\in p_1 \mathring{\vee} q_1$, choose an element $t=p_2\vee q_2$ in lattice $[\mathbf{0},s]$, we claim that it is a minimal upper bound of $p_2,q_2$ in $\mathfrak{L}$. Otherwise, we will have another upper bound $t_1\le t$, a contradiction to $t=p_2\vee q_2$ in lattice $[\mathbf{0},s]$. For the "only one" part, assume that $t_1,t_2\in p_2 \mathring{\vee} q_2$ with $t_1,t_2\leq s$ and $t_1\neq t_2$. It is easy to see that $t_1,t_2$ are also minimal upper bound of  $p_2,q_2$ in $[\mathbf{0},s]$, but this is impossible since $[\mathbf{0},s]$ is a lattice.
\end{proof}

\subsection{Orlik-Solomon algebra}
P. Orlik and L. Solomon introduced the OS-algebra $A^*(L)$ in \cite{Orlik1980} for any finite geometric lattice  $L$.

\vskip .2cm

Let  ${\rm Atom}(L)$ be the set of all atoms of $L$. Recall a subset $S\subset {\rm Atom}(L)$ is said to be dependent if $r(\vee S)<|S|$.
Let $E^*(L)$ be the exterior algebra over a commutative ring $R$ with unit 1, generated by the
elements $e_a, a\in {\rm Atom}(L)$  with the basis
$e_S=e_{a_1}\cdots e_{a_k}$,
$S=\{a_1,.., a_k\}\subset {\rm Atom}(L)$, where $e_S=1$ if $S$ is empty.  $E^*(L)$ admits the natural derivation
$\partial: E^*(L)\longrightarrow E^*(L)$ given by
$$\partial e_S=
\begin{cases}
0 & \text{ if $S$ is empty}\\
1 & \text{ if } S=\{a\}\\
\sum_{j=1}^k(-1)^{j-1}e_{a_1}\cdots \widehat{e_{a_j}}\cdots e_{a_k} & \text{ if } S=\{a_1,.., a_k\}.
\end{cases}$$
Let $\mathcal{I}(L)$  be the ideal in $E^*(L)$,
generated by $\partial e_S$ for all dependent sets $S\subset {\rm Atom}(L)$.
Then the Orlik-Solomon algebra of  $L$
is defined as  the graded commutative quotient $R$-algebra
$A^*(L)=E^*(L)/\mathcal{I}(L)$.

\vskip .2cm

We list some well known properties of the OS-algebra here. For more details, see \cite{Yuzvinsky2001, Dimca2009}.

\begin{prop}~ \label{OS-property}
	\begin{enumerate}
		\item
	The OS algebra $A^*(L)$ is a $L$-graded algebra, i.e., $A^*(L)=\bigoplus_{p\in L}A^*(L)_p$, where $A^*(L)_p$ denotes the homogeneous submodule of order $p$, which is also a free $R$-module.
	\item
	$A^*(L)_p\cdot A^*(L)_q \subseteq A^*(L)_{p\vee q}$. If $r(p\vee q)<r(p)+r(q)$, then $A^*(L)_p\cdot A^*(L)_q=0$.
	\item
	$A^*([\mathbf{0},p])$ is naturally isomorphic to the sub-algebra $\bigoplus_{q\leq p}A^*([\mathbf{0},s])_q \subset A^*([\mathbf{0},s])$ for every $p \leq s$. Denote the imbedding map $A^*([\mathbf{0},p])\hookrightarrow A^*([\mathbf{0},s])$ by $i_{s}$.
	\item
	The derivation $\partial$ on $E^*(L)$ induces a derivation (still denoted by $\partial$) on the OS-algebra $A^*(L)$, which maps $A^*(L)_p$ to $\bigoplus_{p:>q}A^*(L)_q$.
In particular, $(A^*([\mathbf{0},p]),\partial)$ is an exact complex for every $p\neq \mathbf{0}, p\in L$.
	\item
	$\dim A^*(L)_p=(-1)^{r(p)}\mu(0,p)$ for any field as coefficients, where $\mu(-,-)$ is the M\"{o}bius function of $L$.
\end{enumerate}
\end{prop}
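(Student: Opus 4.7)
The plan is to begin by installing an $L$-grading on the ambient exterior algebra $E^*(L)$, declaring $\deg_L(e_S)=\vee S$, and then showing that the defining ideal $\mathcal{I}(L)$ is homogeneous for this grading. The nominal generators $\partial e_S$ for arbitrary dependent $S$ are not themselves homogeneous in general (the various $\vee(S\setminus\{a_j\})$ can be strictly smaller than $\vee S$), so I would first reduce to circuits. The key identity is that for a circuit $C=\{a_1,\dots,a_k\}$ one has $e_C=e_{a_1}\cdot\partial e_C$ (all cross terms vanish because $e_{a_1}^2=0$), placing $e_C$ itself inside $\mathcal{I}(L)$; hence $e_D\in\mathcal{I}(L)$ for every dependent $D$, since $D$ contains a circuit. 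Thus $\mathcal{I}(L)$ is generated by $\{e_D:D\text{ dependent}\}\cup\{\partial e_C:C\text{ circuit}\}$, and both families are $L$-homogeneous, because for a circuit $C$ every term $e_{C\setminus\{a_j\}}$ has join $\vee C$ (the set $C\setminus\{a_j\}$ is independent of size $r(\vee C)$). This proves (1) up to freeness of the homogeneous pieces, which I would deduce from the standard nbc (no broken circuit) basis of $A^*(L)$. For (2), the multiplicative relation $\vee(S\cup T)=\vee S\vee\vee T$ gives $A^*(L)_p\cdot A^*(L)_q\subseteq A^*(L)_{p\vee q}$; when $r(p\vee q)<r(p)+r(q)$, any product of independent representatives has underlying atom set of size $r(p)+r(q)>r(p\vee q)$, hence dependent, and therefore zero in $A^*(L)$.

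For (3), the inclusion $\mathrm{Atom}([\mathbf{0},p])\hookrightarrow\mathrm{Atom}([\mathbf{0},s])$ is dependence-preserving because the rank of any subset of $\mathrm{Atom}([\mathbf{0},p])$ computed in $[\mathbf{0},p]$ agrees with its rank in $[\mathbf{0},s]$, yielding a well-defined map $i_s\colon A^*([\mathbf{0},p])\to A^*([\mathbf{0},s])$ whose image lies in $\bigoplus_{q\leq p}A^*([\mathbf{0},s])_q$. To identify the image I would fix a total order on $\mathrm{Atom}([\mathbf{0},s])$ placing the atoms of $[\mathbf{0},p]$ first, so that an nbc set of $[\mathbf{0},s]$ contained in $\mathrm{Atom}([\mathbf{0},p])$ is exactly an nbc set of $[\mathbf{0},p]$; the matching bases identify $A^*([\mathbf{0},p])$ with the claimed submodule. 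For (4), the explicit formula $\partial e_S=\sum(-1)^{j-1}e_{S\setminus\{a_j\}}$ applied to an independent $S$ with $\vee S=p$ produces terms whose joins are covered by $p$, giving the refinement of the grading. For exactness of $(A^*([\mathbf{0},p]),\partial)$ when $p\neq\mathbf{0}$, pick any atom $a\leq p$ and let $\sigma$ be left multiplication by $e_a$; the graded Leibniz rule yields
\[
\partial\sigma(x)+\sigma\partial(x)=\partial(e_a x)+e_a\partial(x)=(\partial e_a)\cdot x-e_a\partial(x)+e_a\partial(x)=x,
\]
so $\sigma$ is a contracting homotopy.

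Finally, (5) follows by combining (4) with the $L$-grading and the M\"obius recursion. Over a field, for $p\neq\mathbf{0}$ the vanishing Euler characteristic of the exact complex $(A^*([\mathbf{0},p]),\partial)$, together with the decomposition $A^*([\mathbf{0},p])=\bigoplus_{q\leq p}A^*(L)_q$ from (3), gives
\[
\sum_{q\leq p}(-1)^{r(q)}\dim A^*(L)_q=0,
\]
which matches the defining identity $\sum_{q\leq p}\mu(\mathbf{0},q)=0$ termwise after induction on $r(p)$, with base $\dim A^*(L)_{\mathbf{0}}=1=\mu(\mathbf{0},\mathbf{0})$. The main obstacle I expect is the homogeneity argument in (1): one must carefully justify that the naive generating family $\{\partial e_S:S\text{ dependent}\}$ yields the same ideal as the $L$-homogeneous family described above, which is the content of the circuit reduction. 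Establishing the nbc basis — used both to get freeness in (1) and to compare bases in (3) — is the other nontrivial ingredient, which I would prove by induction on the number of atoms via a deletion--restriction short exact sequence.
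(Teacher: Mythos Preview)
Your proposal is correct and follows the standard line of argument for these facts. Note, however, that the paper does \emph{not} prove this proposition: it is stated as a list of well-known properties of the Orlik--Solomon algebra, with a pointer to the survey of Yuzvinsky and the lectures of Dimca--Yuzvinsky for details. Your outline --- reducing the generators of $\mathcal{I}(L)$ to the $L$-homogeneous family $\{e_D:D\text{ dependent}\}\cup\{\partial e_C:C\text{ a circuit}\}$, invoking the nbc basis for freeness and for the comparison in (3), using left multiplication by an atom as a contracting homotopy in (4), and deducing (5) from the vanishing Euler characteristic and the M\"obius recursion --- is exactly the approach one finds in those references, so there is no divergence to discuss. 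One small point worth making explicit in your write-up of (1): to see that the circuit-based generating set produces \emph{all} of $\mathcal{I}(L)$, argue by induction on $|S|$ that $\partial e_S$ lies in the ideal generated by $\{\partial e_C:C\text{ a circuit}\}$, using the identity $\partial e_S=\pm e_{S\setminus\{a\}}\mp e_a\,\partial e_{S\setminus\{a\}}$ for any $a\in S$ outside a circuit contained in $S$.
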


	Since we mainly consider the case of locally geometric posets in this paper, naturally we wish to know {\em whether  OS-algebra can still be defined on locally geometric posets or not.}  We will answer this in Sections \ref{presheaf}--\ref{monoidal presheaf}.

\subsection{Manifold arrangements}\label{manifold arrangements}
Given a connected manifold $M$  and a finite collection of  submanifolds $\mathcal{A}=\{N_i\}$, where $M$ and each $N_i$ are smooth without boundaries. As defined in \cite{Ehrenborg2014},   $\mathcal{A}$ is said to be a {\em manifold arrangement} if it satisfies the Bott's clean intersection property that for every $x\in M$, there exist
a neighborhood $U$ of $x$, a neighborhood $W$ of the origin in $ \mathbb{R}^n$, a subspace arrangement $\{V_i\}$ in $ \mathbb{R}^n$
and a diffeomorphism $\phi : U\rightarrow W$  such that $\phi$ maps $x$ to the origin and maps $\{N_i\cap U\}$ to $\{V_i\cap W\}$.
\vskip .2cm

 Roughly speaking, a manifold arrangement is 'locally diffeomorphic' to a subspace arrangement in an Euclidean space. There are some different combinatorical structures associated with manifold arrangements $\mathcal{A}$ (e.g. the poset of all possible intersection of each $\mathcal{A}$). In the case of hypersurface arrangements, many authors prefer the poset of layers. R. Ehrenborg and M. Readdy \cite{Ehrenborg2014} introduced the concept of \emph{intersection poset} as a flexible tactic that there could be several suitable intersection posets depending on particular circumstances. Roughly speaking, they combined some layers together by a suitable way. However, some condition in their definition of intersection poset is unnecessary in this paper, we remove these condition and define  \emph{quasi-intersection poset}.
 
 \vskip .2cm

 Recall that an intersection of $\mathcal{A}$ is the intersection of a subset of $\mathcal{A}$, and we always let the intersection of empty set be $M$. A connect component of a nonempty intersection is called a layer.
 \begin{defn}
 	For any nonempty intersection with connected components $c_1, ..., c_k$, any disjoint union $c_{i_1}\sqcup c_{i_2}\sqcup \cdots\sqcup c_{i_s}$ is called a \emph{quasi-layer}. All quasi-layer form a poset $\mathfrak{P}$ ordered by reverse inclusion. For any $p\in \mathfrak{P}$, let $M_p$ be the associated quasi-layer (it is the same thing of $p$, we use the symbol $M_p$ to emphasize that it is a submanifold of $M$ rather than an element of poset).	
 	A \emph{quasi-intersection poset} $P$ of manifold arrangement $\mathcal{A}$ is a sub-poset of $\mathfrak{P}$ such that:
 	\begin{enumerate}
 		\item $M$ is the minimum element;
 		\item $M_p \cap M_q$ equals disjoint union $\sqcup_{s\in p \mathring{\vee} q}M_s$ for all $p,q\in P$.
 	\end{enumerate}

 \end{defn}

\vskip .2cm
In this paper, we always assume the following two {\em additional conditions} for manifold arrangement $\mathcal{A}$ that are enough for our application:
\begin{itemize}
	\item every submanifold $N_i$ is a closed subset of $M$;
	\item there exists a quasi-intersection poset $\mathfrak{L}$ of $\mathcal{A}$ such that $\mathfrak{L}$ is locally geometric poset.
\end{itemize}

Given $p\in \mathfrak{L}$, let $M_p$ be the associated quasi-layer. Set  $S_p=M_p-\bigcup_{q>p}M_q$ and write $\mathcal{M}(\mathcal{A})=S_{\textbf{0}}$. By $\mathcal{A}_p$ we denote the collection $\{M_q|q:>p\}$. Then  $\mathcal{A}_p$ is a manifold arrangement in $M_p$ with an intersection poset $[p,\infty)$ and $S_p= \mathcal{M}(\mathcal{A}_p)$.

\subsection{Presheaf and copresheaf on poset}\label{presheaf}
We see from Proposition~\ref{OS-property} that $(A^*([\mathbf{0},p]),\partial)$ is  an exact complex, i.e., its all homologies vanish. If we combine this complex with suitable "coefficients", then the corresponding  homologies  will be more interesting.

\vskip .2cm
For $p, q$ in a poset $P$, $p\leq q$,  we may understand that there is a unique morphism $p\rightarrow q$, so $P$ may be regarded as a category.

\begin{defn} A \textbf{presheaf} on a poset $P$ is a contravariant functor $\mathcal{C}$ from $P$  to the category of $R$-modules (or algebras), by mapping $p \longmapsto \mathcal{C}_p$ and  mapping every $q\rightarrow p$ to $f_{p,q}:\mathcal{C}_p \rightarrow \mathcal{C}_q$,  satisfying
	\begin{enumerate}
		\item
		$f_{p,p}=id$;
		\item
		$f_{q,s}\circ f_{p,q}=f_{p,s}$.
	\end{enumerate}
All presheaves  on $P$ with natural transformations as  morphisms also form a category. Similarly, we may also define a covariant functor from $P$ to the category of $R$-modules (or algebras) by mapping every $p\rightarrow q$ to $f_{p,q}:\mathcal{C}_p \rightarrow \mathcal{C}_q$, which is called a \textbf{copresheaf} on $P$.
\end{defn}
\begin{rem}
The definition of presheaves on $P$ is a special case of presheaves on category as in \cite{Artin1962}.  We follow this statement and use the terminology "presheaf (copresheaf)" as well. 
\end{rem}
\begin{example}\label{ex}
	Let $\mathcal{A}$ be a manifold arrangement in $M$ with a quasi-intersection poset $\mathfrak{L}$.
	The cochains $\mathcal{C}(\mathcal{A})_p=C^*(M,M-M_p)$ combined with inclusion maps $$f_{p,q}:C^*(M,M-M_p) \rightarrow C^*(M,M-M_q)$$ for $p\geq q$ give a presheaf on $\mathfrak{L}$. Similarly, let $H^*(\mathcal{A})_p=H^*(M,M-M_p)$, $H^*(\mathcal{A})$ is also a presheaf on $\mathfrak{L}$. We will see more details in next section.
\end{example}
\begin{example}\label{example of copresheaf}
	With the same assumption as Example~\ref{ex},
	the cohomology rings $H^*(M_p)$ combined with $\phi^*_{q,p}$ give a copresheaf (of rings) on $\mathfrak{L}$, where $\phi^*_{q,p}$ is the cohomology homomorphism induced by the inclusion map $ \phi_{q,p}: M_q\rightarrow M_p$ for $q\geq p$.
\end{example}
Now, we define a  complex $(A^*(\mathfrak{L},\mathcal{C}),\partial)$ for a presheaf $\mathcal{C}$ on a locally geometric poset $\mathfrak{L}$, which is a generalization of the chain complex structure on OS-algebra.

\begin{defn}\label{OS complex}
	Let $\mathcal{C}$ is a presheaf on $\mathfrak{L}$. Define
	$$A^*(\mathfrak{L},\mathcal{C})_p= A^*([\mathbf{0},p])_p\otimes \mathcal{C}_p$$
		$$A^*(\mathfrak{L},\mathcal{C})=\bigoplus_{p\in \mathfrak{L}}A^*(\mathfrak{L},\mathcal{C})_p$$
with the differential $\partial$ defined by
	$$\partial (x\otimes c) = \sum_i x_i\otimes f_{p,p_i}(c)$$
	 for $x\in A^*([\mathbf{0},p])_p$, $c \in \mathcal{C}_p $ and $\partial x = \sum_i x_i$ in $A^*([\mathbf{0},p])$ such that $x_i \in A^*([\mathbf{0},p])_{p_i}$ and $p$ covers $p_i$.
\end{defn}

It needs to check that $(A^*(\mathfrak{L},\mathcal{C}),\partial)$ is a well-defined chain complex. Actually,  let $\partial x_i=\sum_j x_{i,j}$ such that $x_{i,j}$ belongs to some $A^*([\mathbf{0},p])_{p_j}$ where $r(p_j)=r(p_i)-1=r(p)-2$. Then we see that $\sum_{i}x_{i,j}=0$ for every $j$ since $\partial\partial(x)=0$. Thus $\partial\partial(x\otimes c)=\sum_{i,j}x_{i,j}\otimes f_{p,p_{j}}(c)=\sum_j(\sum_ix_{i,j})\otimes f_{p,p_{j}}(c)=0$, as desired.

\vskip .2cm
 Put  a {\em negative} grading on $A^*(\mathfrak{L},\mathcal{C})$ such that $A^*(\mathfrak{L},\mathcal{C})_{-i} = \bigoplus_{r(p)=i}A^*([\mathbf{0},p])_p\otimes \mathcal{C}_p$. The negative grading will be convenient for the construction of double complex and the use of spectral sequence later.
For any subset $[p,\infty)\subset \mathfrak{L}$, we will use $A^*([p,\infty),\mathcal{C})$ to denote $A^*([p,\infty),\mathcal{C}|_{[p,\infty)})$ for a convenience, where $\mathcal{C}|_{[p,\infty)}$ means the restriction of  $\mathcal{C}$ on poset $[p,\infty)$. Then we can check easily that all $\mathcal{C} \longmapsto A^*([p,\infty),\mathcal{C})$ define a functor from presheaves  on $\mathfrak{L}$ to the category of $R$-module complexes.

\begin{rem}
	Actually we may also define a functor $$\Gamma_{p}(\mathcal{C})=\text{Coker}(\bigoplus_{\alpha>p}\mathcal{C}_\alpha \xrightarrow{\oplus f_{\alpha,p}} \mathcal{C}_p)$$
It can easily be checked  that $\Gamma_{p}$ is right exact and the left derived functor $L_i\Gamma_{p}$ is naturally isomorphic to $H^{-i}(A^*([p,\infty),\mathcal{C}),\partial)$. However, we do not use this fact in this article.
\end{rem}

The following definition and  lemma will be used later.

\begin{defn}
	Let $\alpha\in \mathfrak{L}$ and $A$ be any $R$-module. Define a presheaf $j_{\alpha*}A$ on $\mathfrak{L}$ as
	\begin{subnumcases}
	{(j_{\alpha*}A)_p=}
	A, &$p\leq \alpha$ \nonumber\\
	0, &otherwise \nonumber
	\end{subnumcases}
	where the map $f_{p,q}$ is the identity if $q\leq p \leq \alpha$ and zero otherwise, which is called the \textbf{sky-scraper presheaf}.
\end{defn}

The following lemma is a direct result of the exactness of the OS-algebra.
\begin{lem}\label{sky}
	For any sky-scraper presheaf $j_{\alpha*}A$, $(A^*([p,\infty),j_{\alpha*}A),\partial)$ is exact if and only if $\alpha \neq p$. If $\alpha = p$, then
 $$H^{-i}(A^*([p,\infty),j_{\alpha*}A))=
 \begin{cases} A & \text{ if } i=0\\
 0 &  \text{ if } i\neq 0.
 \end{cases}
 $$
\end{lem}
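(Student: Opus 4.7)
The plan is to reduce $(A^*([p,\infty), j_{\alpha*}A), \partial)$ to a tensor product of a standard OS-chain complex with $A$, and then invoke Proposition~\ref{OS-property}(4). First I would observe that the restriction of $j_{\alpha*}A$ to $[p,\infty)$ is supported precisely on $[p,\infty)\cap[\mathbf{0},\alpha]$, i.e.\ on the interval $[p,\alpha]$ when $\alpha\ge p$, and is empty otherwise. Because the transition maps $f_{q,q'}$ are identities throughout this support, for any $q\in[p,\alpha]$ and $x\otimes c\in A^*([p,q])_q\otimes A$, every covering relation $q:>q'$ still lies in $[p,\alpha]$, and Definition~\ref{OS complex} collapses to $\partial(x\otimes c)=\partial x\otimes c$.

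Next I would dispose of the degenerate cases and then handle the main case. If $\alpha\not\ge p$, the complex is identically zero, hence trivially exact. If $\alpha=p$, the only surviving summand is $A^*([p,p])_p\otimes A\cong R\otimes_R A=A$, concentrated in negative degree $0$, giving exactly the asserted cohomology $H^{0}=A$, $H^{-i}=0$ for $i\ne 0$. In the substantive case $\alpha>p$, Proposition~\ref{OS-property}(3) identifies $A^*([p,q])_q$ canonically with $A^*([p,\alpha])_q$ for every $q\in[p,\alpha]$, which assembles into an isomorphism of chain complexes
\[
A^*([p,\infty),j_{\alpha*}A) \;\cong\; A^*([p,\alpha])\otimes_R A,
\]
with differential $\partial\otimes\mathrm{id}_A$. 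Since $[p,\alpha]$ is a geometric lattice with minimum $p$ strictly below its maximum $\alpha$, Proposition~\ref{OS-property}(4) applied to the lattice $[p,\alpha]$ (in the role of the ``$[\mathbf{0},p]$'' appearing there, with $\alpha\neq p$ playing the role of ``$p\neq\mathbf{0}$'') gives exactness of $(A^*([p,\alpha]),\partial)$.

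The last step, which I expect to be the main obstacle, is promoting exactness of $A^*([p,\alpha])$ to exactness of $A^*([p,\alpha])\otimes_R A$; this is not automatic for a general (non-flat) $R$-module $A$. I would invoke Proposition~\ref{OS-property}(1): each graded piece $A^*([p,\alpha])_q$ is a free $R$-module, and the complex is bounded since $\mathfrak{L}$ is finite. A bounded exact complex of free modules is split exact, by a descending induction using projectivity of free modules to split the surjections onto the successive kernels; split exactness is preserved by $-\otimes_R A$, so $A^*([p,\alpha])\otimes_R A$ is exact. Alternatively, one may exhibit an explicit contracting homotopy on $A^*([p,\alpha])$, for instance left multiplication by $e_a$ for a fixed atom $a$ of $[p,\alpha]$ (since in an exterior algebra $\partial(e_a\cdot x)+e_a\cdot\partial x=x$ up to routine sign checks), and then tensor with $\mathrm{id}_A$. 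Either route completes the proof.
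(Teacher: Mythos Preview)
Your proof is correct and follows exactly the approach the paper intends: the paper states only that the lemma ``is a direct result of the exactness of the OS-algebra'' and gives no further argument, so your reduction to $A^*([p,\alpha])\otimes_R A$ via Proposition~\ref{OS-property}(3) and the invocation of Proposition~\ref{OS-property}(4) is precisely what is meant. Your additional care in justifying that exactness survives $-\otimes_R A$ (either via split exactness of a bounded exact complex of free modules, or via the explicit homotopy $h(x)=e_a\cdot x$) fills a detail the paper leaves implicit.
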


\subsection{Monoidal presheaf on lattice}\label{sec mono} With the understanding on a negative grading on $A^*(\mathfrak{L},\mathcal{C})$,
	 the cohomology $H^{-i}(A^*([p,\infty),\mathcal{C}))$ will be an $R$-algebra if $\mathcal{C}$ has a suitable product structure on it, as we will describe below. It is exactly our main approach for the calculation of the cohomology ring of the complement of manifold arrangements.	

\vskip .2cm 
\textbf{Notation.} Let $\mathcal{C}$ be a presheaf on $P$ and $I$ is a subset of $P$, denote $\mathcal{C}_I=\bigoplus_{p\in I}\mathcal{C}_p$. Let $J\subseteq P$ and there exists map $\lambda : I\rightarrow J$ that $\forall p\in I,\lambda(p)\leq p$.
Denote $f_{I,J,\lambda}=\bigoplus_{p\in I}f_{p,\lambda p}$ . In this paper, $\lambda$ is always clear in context, so we omit it and always write $f_{I,J}$.

\begin{defn}\label{monoidal presheaf}
	Let $\mathcal{C}$ be a presheaf on a locally geometric poset $\mathfrak{L}$. Then  $\mathcal{C}$ is said to be \textbf{monoidal}  if $\bigoplus_p\mathcal{C}_p$ is an associative algebra satisfying that $\mathcal{C}_s\cdot \mathcal{C}_t \subseteq \mathcal{C}_{s\mathring{\vee} t}$ for every $s,t\in \mathfrak{L}$ and	
	$$b\cdot
	f_{p,q}(a)=f_{p\mathring{\vee} s, q\mathring{\vee} s}(b\cdot a)$$ $$f_{p,q}(a)\cdot b=f_{p\mathring{\vee} s, q\mathring{\vee} s}(a\cdot b)$$ for $q\leq p,a\in \mathcal{C}_p,b\in \mathcal{C}_s$. Notice that in above notation of $f_{p\mathring{\vee} s, q\mathring{\vee} s}$, we need a map $\lambda: p\mathring{\vee} s \rightarrow q\mathring{\vee} s$, we always let it be the canonical map $\lambda_{psqs}$ given in Proposition \ref{canonical map}(3) with no confusion.
\end{defn}

\begin{rem}
By Definition~\ref{monoidal presheaf}, a monoidal presheaf $\mathcal{C}$ on a geometric lattice $L$ is actually a monoidal functor from $L$ to the category of $R$-modules if there is a unit $1\in \mathcal{C}_\mathbf{0}$, but we will not use this general terminology for simplicity. In this case,  $L$ is regarded as a monoidal category and $\vee$ is the monoidal product. 	
\end{rem}

\begin{example}
	Let $\mathcal{A}$ be a manifold arrangement in $M$ with a quasi-intersection poset $\mathfrak{L}$. Then
	the cohomology rings $H^*(\mathcal{A})_p:=H^*(M,M-M_p)$ form a monoidal presheaf with cup product $ H^*(M,M-M_p)\otimes H^*(M,M-M_q)\xrightarrow{\cup} \bigoplus_{s\in p\mathring{\vee} q }H^*(M,M-M_{s})$. This monoidal presheaf $H^*(\mathcal{A})$ will appears many times later.
\end{example}

For a monoidal presheaf $\mathcal{C}$ on $\mathfrak{L}$, making use of the OS-algebra $A^*([\mathbf{0},p])$ and the monoidal product of $\mathcal{C}$ we can define  a  product structure on $A^*(\mathfrak{L},\mathcal{C})$ as follows.

\begin{defn}\label{OS-alg with coef}
	Assume that $\mathcal{C}$ is a monoidal presheaf on a locally geometric poset $\mathfrak{L}$. Let $j_s: \bigoplus_p \mathcal{C}_p\rightarrow \mathcal{C}_s$ be the projection on $\mathcal{C}_s$. Define the product on $A^*(\mathfrak{L},\mathcal{C})$ as $$(x\otimes c_1)\cdot (y\otimes c_2)=(-1)^{\deg(c_1)r(q)}\sum_{s\in p\mathring{\vee} q}(i_{s}x\cdot i_{s}y)\otimes j_s(c_1\cdot c_2)$$ for $x\in A^*([\mathbf{0},p])_p,y\in A^*([\mathbf{0},q])_q, c_1 \in \mathcal{C}_p, c_2 \in \mathcal{C}_q$, where the first "$\cdot$" in the right side is the product of OS-algebra $A^*([\mathbf{0},s])$ and $i_s$ is the imbedding map in Proposition \ref{OS-property}(3).
\end{defn}

\begin{rem}
	The algebra $A^*(\mathfrak{L},\mathcal{C})$ can be viewed as a "global" OS-algebra with  $\mathcal{C}$ as coefficients, and it is actually a differential graded algebra. We will prove it later.
\end{rem}

\section{A presheaf and its filtration for manifold arrangements}\label{manifold arrangement}
 Let $\mathcal{A}$ be a manifold arrangement in $M$ with  quasi-intersection poset $\mathfrak{L}$. Recall that $M_p$ is the quasi-layer associated with $p\in \mathfrak{L}$, and $S_p=M_p-\bigcup_{q>p}M_q$, as defined in last section.
\vskip.2cm
Associated with the manifold arrangement $\mathcal{A}$, there is a natural presheaf $\mathcal{C}(\mathcal{A})$ that encodes topological data of $\mathcal{A}$,
 	such that $\mathcal{C}(\mathcal{A})_p= C^*(M,M-M_p)$, and $\mathcal{C}(\mathcal{A}): p \longmapsto \mathcal{C}(\mathcal{A})_p$ is a graded presheaf on $\mathfrak{L}$ with the inclusion map $f_{p,q} : \mathcal{C}(\mathcal{A})_p \rightarrow \mathcal{C}(\mathcal{A})_q$ for $p\geq q$.

\subsection{A classical filtration}
We consider a classical filtration $F_p^*M$ of  $M$ for every  $p\in \mathfrak{L}$ and then study the $E_1$-term of associated spectral sequence.  This will be very useful in the proof of our main theorem.

\begin{defn}\label{geo filt}
	Let $p\in \mathfrak{L}$. Define an increasing filtration  $F_p^0M\subset F_p^1M \subset\cdots \subset M$ by
	\begin{subnumcases}
	{F_p^iM=}
	M- M_p, &  if \ ~$i < r(p)$ \nonumber\\
	M- \bigcup_{q\geq p,r(q)=i}M_q, & if \ ~$i \geq r(p).$ \nonumber
	\end{subnumcases}
\end{defn}

These filtrations $F_p^*M, p\in \mathfrak{L}$ induce the decreasing filtrations of the presheaf $\mathcal{C}(\mathcal{A})$, which are defined as follows.

\begin{defn}\label{presheaf of SS}
	Let $p\in \mathfrak{L}$.  Define a filtration of  $\mathcal{C}(\mathcal{A})$
 $$F^0\mathcal{C}(\mathcal{A})_p\supset F^1\mathcal{C}(\mathcal{A})_p \supset\cdots \supset F^{i}\mathcal{C}(\mathcal{A})_p\supset\cdots$$ by
	$$F^i\mathcal{C}(\mathcal{A})_p=C^*(M,F_p^iM).$$

Let $E_{r,p}^{*,*}(\mathcal{A})$ be the $E_r$-term of the spectral sequence associated with the filtration $F^*\mathcal{C}(\mathcal{A})_p$.
Clearly, the inclusion map $f_{p,q}: \mathcal{C}(\mathcal{A})_p \rightarrow \mathcal{C}(\mathcal{A})_q$ gives $F^i\mathcal{C}(\mathcal{A})_p \subset F^i\mathcal{C}(\mathcal{A})_q$ by definition, so $f_{p,q}$ induces the map between two spectral sequences, denoted by $f_{p,q,r}:E_{r,p}^{*,*}(\mathcal{A})\rightarrow E_{r,q}^{*,*}(\mathcal{A})$. Thus $E_{r,*}^{*,*}(\mathcal{A})$ is also a presheaf on $\mathfrak{L}$ for every $E_r$-page.
\end{defn}

The following lemma will be useful in the study of the presheaf on $\mathfrak{L}$ for the $E_1$-page.

\begin{lem}\label{lem of filtration}
	$F_p^iM$ is an open subset of $F_p^{i+1}M$, formed by removing some  sub-manifolds $\sqcup_{\alpha\geq p,r(\alpha)=i}S_\alpha$ as closed subsets.
\end{lem}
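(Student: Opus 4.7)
The plan is to unwind the two cases in Definition~\ref{geo filt} and do the computation on the nose. When $i < r(p)$ the assertion is vacuous: both $F_p^iM$ and $F_p^{i+1}M$ equal $M-M_p$ (if $i+1 = r(p)$, the only $q \geq p$ with $r(q) = r(p)$ is $q=p$ itself, since $r$ is a strict grading on the locally geometric poset $\mathfrak{L}$), and the index set $\{\alpha \geq p : r(\alpha) = i\}$ is empty. So the substantive case is $i \geq r(p)$, where
\[
F_p^{i+1}M \setminus F_p^iM \;=\; \Big(\bigcup_{q\geq p,\ r(q)=i}M_q\Big)\cap F_p^{i+1}M.
\]

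The main step is to show that for each fixed $q \geq p$ with $r(q) = i$, one has $M_q \cap F_p^{i+1}M = S_q$. For the inclusion $S_q \subseteq M_q\cap F_p^{i+1}M$, let $\beta \geq p$ with $r(\beta)=i+1$; any $s \in q \mathring{\vee} \beta$ satisfies $s \geq q$ with $r(s) \geq r(\beta) = i+1 > r(q)$, hence $s > q$, and so $M_q\cap M_\beta = \sqcup_{s\in q\mathring{\vee}\beta} M_s \subseteq \bigcup_{\beta'>q} M_{\beta'}$, which is disjoint from $S_q$. Conversely, if $x \in M_q$ lies in some $M_{\beta'}$ with $\beta'>q$, pick the unique $s \in q\mathring{\vee}\beta'$ containing $x$; then $s > q$ inside the geometric lattice $[\mathbf{0},s]$, so the interval $[q,s]$ (again geometric, by Proposition~\ref{geo-lattice}) contains some $\gamma$ covering $q$ in $[\mathbf{0},s]$ with $\gamma\leq s$. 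Then $\gamma \geq p$, $r(\gamma)=i+1$, and $x \in M_s \subseteq M_\gamma$, so $x \notin F_p^{i+1}M$. This proves the equality, and hence $F_p^{i+1}M\setminus F_p^iM = \bigcup_{q\geq p,\, r(q)=i} S_q$.

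Disjointness of the $S_q$'s over distinct same-rank $q_1, q_2$ follows from the same mechanism: a common point would lie in $M_s$ for some $s \in q_1\mathring{\vee}q_2$, and incomparability of $q_1,q_2$ forces $s > q_1$, contradicting $x \in S_{q_1}$. For openness, it suffices to observe that each stratum $S_q = M_q \cap F_p^{i+1}M$ is closed in $F_p^{i+1}M$, because $M_q$ is closed in $M$: it is a finite disjoint union of connected components of a closed intersection $\bigcap_{j} N_j$ of the closed submanifolds in $\mathcal{A}$, and connected components of a closed set are closed.

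The only slightly delicate point is the atom-extraction inside the geometric lattice $[\mathbf{0},s]$, used to absorb an arbitrary $M_{\beta'}$ with $\beta'>q$ into the rank-$(i+1)$ layer $\bigcup_{\gamma\geq p,\,r(\gamma)=i+1}M_\gamma$; everything else is bookkeeping directly from Definitions~\ref{geo filt} and the quasi-intersection poset axiom $M_p\cap M_q = \sqcup_{s\in p\mathring{\vee}q}M_s$.
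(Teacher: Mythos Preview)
Your proof is correct and follows essentially the same route as the paper's: both establish the key identity $S_q = M_q \cap F_p^{i+1}M$ for $q\geq p$ with $r(q)=i$, and deduce closedness from the closedness of $M_q$ in $M$. The paper reaches this via a short chain of set equalities using the stratification $\bigcup_{r(\alpha)=i,\,\alpha\geq p} M_\alpha = \bigsqcup_{r(\alpha)\geq i,\,\alpha\geq p} S_\alpha$ and then remarks that ``a similar calculation'' gives $S_\alpha = M_\alpha\cap F_p^{i+1}M$, whereas you prove that identity directly by the atom-extraction argument in the geometric lattice $[\mathbf{0},s]$; your version is more explicit about the lattice-theoretic step the paper leaves to the reader.
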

\begin{proof}
	If $i\geq r(p)$, then
	\begin{align*}
	F_p^iM&=M- \cup_{\alpha\geq p,r(\alpha)=i}M_\alpha\\
	&=M- \sqcup_{\alpha\geq p,r(\alpha)\geq i}S_\alpha\\
	&=M- \sqcup_{\alpha\geq p,r(\alpha)\geq i+1}S_\alpha-\sqcup_{\alpha\geq p,r(\alpha)=i}S_\alpha\\
	&=M- \cup_{\alpha\geq p,r(\alpha)= i+1}M_\alpha-\sqcup_{\alpha\geq p,r(\alpha)=i}S_\alpha\\
	&=F_p^{i+1}M-\sqcup_{\alpha\geq p,r(\alpha)=i}S_\alpha
	\end{align*}
	If $i<r(p)$ , then $\sqcup_{\alpha>p,r(\alpha)=i}S_\alpha=\emptyset$ so the above equation follows from the definition of $F_p^iM$.
	A similar calculation shows that $S_\alpha=M_\alpha \cap F_p^{i+1}M$ for $\alpha\geq p$ and $r(\alpha)=i$. Then $S_\alpha$ is a closed subset of $F_p^{i+1}M$ since $M_\alpha$ is closed in $M$, so is $\sqcup_{\alpha\geq p,r(\alpha)=i}S_\alpha$ because $\mathfrak{L}$ is finite.
\end{proof}

\subsection{The presheaf of the $E_1$-page}
We will show that the  presheaf of the $E_1$-page has a simple structure. Actually it is just a direct sum of some sky-scraper presheaves.

\vskip .2cm

In the following discussion, for  a submanifold  $S$ of some manifold $X$, by $N(S)$ we denote the tubular neighborhood of $S$ in $X$ and  $N(S)_0=N(S)-S$. If $S$ is zero-codimensional, we convention that $N(S)=S$ so $N(S)_0=\emptyset$.

\begin{rem}\label{rem of excision}
	Lemma \ref{lem of filtration} tells us that we may use the excision theorem on the couple $(F_p^{i+1}M,F_p^iM)$ where $F_p^iM=F_p^{i+1}M-\sqcup_{\alpha\geq p,r(\alpha)=i}S_\alpha$.  Consider the tubular neighborhood $N_{p,i}$ of $\sqcup_{\alpha\geq p,r(\alpha)=i}S_\alpha$ in $F_p^{i+1}M$, we may write  $N_{p,i}=\sqcup_{\alpha\geq p,r(\alpha)=i} N(S_\alpha)$ where
$N(S_\alpha)$ is the tubular neighborhood of $S_\alpha$ in $F_\alpha^{i+1}M$,  and then by  $N_{p,i,0}$ we means\\ $\sqcup_{\alpha\geq p,r(\alpha)=i} N(S_\alpha)_0$. Therefore, we have that $H^*(F_p^{i+1}M,F_p^iM)= H^*( N_{p,i},N_{p,i,0})$ by the excision theorem.
\end{rem}

\begin{thm}\label{presheaf of E1}
	The presheaf $E_{1,*}^{i,j}(\mathcal{A})$ on $\mathfrak{L}$ is the direct sum of some sky-scraper presheaves as follows: $$E_{1,*}^{i,j}(\mathcal{A}) \cong \bigoplus_{r(\alpha)=i} j_{\alpha *}H^{i+j}(N(S_\alpha),N(S_\alpha)_0)$$
\end{thm}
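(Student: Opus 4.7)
The plan is to fix $p\in\mathfrak{L}$, identify $E_{1,p}^{i,j}(\mathcal{A})$ with a direct sum of cohomologies of tubular neighborhoods indexed by the strata $S_\alpha$ of rank $i$ with $\alpha\geq p$, and then check that as $p$ varies the resulting presheaf transition maps $f_{p,q,1}$ are precisely the inclusions of direct summands predicted by the sky-scraper description.

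First, for fixed $p$, the short exact sequence of cochain complexes
$$0 \to F^{i+1}\mathcal{C}(\mathcal{A})_p \to F^i\mathcal{C}(\mathcal{A})_p \to F^i\mathcal{C}(\mathcal{A})_p/F^{i+1}\mathcal{C}(\mathcal{A})_p \to 0$$
gives, by the standard construction of the spectral sequence of a decreasing filtration, $E_{1,p}^{i,j}=H^{i+j}(F^i/F^{i+1})$. Comparing the resulting long exact cohomology sequence with the long exact sequence of the triple $(M,F_p^{i+1}M,F_p^iM)$ via the five lemma yields $H^{i+j}(F^i/F^{i+1})\cong H^{i+j}(F_p^{i+1}M,F_p^iM)$. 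By Lemma \ref{lem of filtration}, $F_p^iM$ is obtained from $F_p^{i+1}M$ by removing the disjoint closed submanifolds $\{S_\alpha:\alpha\geq p,\,r(\alpha)=i\}$, so the disjoint tubular neighborhood decomposition of Remark \ref{rem of excision} together with excision produces
$$E_{1,p}^{i,j}\cong H^{i+j}(N_{p,i},N_{p,i,0})\cong\bigoplus_{\alpha\geq p,\,r(\alpha)=i}H^{i+j}(N(S_\alpha),N(S_\alpha)_0).$$

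Next I would handle the presheaf maps. For $p\geq q$ in $\mathfrak{L}$ we have $M_p\subset M_q$ and hence $\{\alpha\geq p\}\subset\{\alpha\geq q\}$, which gives $F_p^iM\supset F_q^iM$ and thereby an inclusion of pairs that induces $f_{p,q,1}$. Choosing the tubular neighborhood $N(S_\alpha)$ of each $S_\alpha$ inside $M$ once and for all, the neighborhood $N_{p,i}$ for $p$ is simply the sub-union of $N_{q,i}$ indexed by $\alpha\geq p$, so $f_{p,q,1}$ acts as the identity on the summand corresponding to such $\alpha$ and kills nothing on the $p$-side. Comparing with the definition of the sky-scraper presheaf, $(j_{\alpha*}A)_p=A$ precisely when $p\leq\alpha$ with identity transitions between nonzero stalks, so setting $A_\alpha=H^{i+j}(N(S_\alpha),N(S_\alpha)_0)$ for each $\alpha$ of rank $i$ and summing gives exactly the presheaf computed above, establishing the isomorphism.

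I expect the main obstacle to be the naturality check in the middle step: ensuring that the identification $H^*(F^i/F^{i+1})\cong H^*(F_p^{i+1}M,F_p^iM)$ coming from the five lemma, composed with the excision isomorphism, really does intertwine $f_{p,q,1}$ with summand inclusion rather than with some more complicated map. This is conceptually routine but requires making the choice of tubular neighborhoods uniform in $p$ so that the inclusions $F_p^\ast M\supset F_q^\ast M$ are respected by the excision data, and then carefully chasing the diagram that compares the two triple long exact sequences for $p$ and $q$.
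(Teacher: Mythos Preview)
Your proposal is correct and follows essentially the same route as the paper. The paper carries out the naturality check you anticipate by writing down the square of inclusions relating $(N_{p,i},N_{p,i,0})\hookrightarrow(N_{q,i},N_{q,i,0})$ to $\chi:(F_q^{i+1}M,F_q^iM)\hookrightarrow(F_p^{i+1}M,F_p^iM)$, and then verifying directly that under the excision isomorphisms the composite $\phi_q^*\circ f_{p,q,1}\circ(\phi_p^*)^{-1}$ projects to zero on each extra summand $H^*(N(S_{\alpha_0}),N(S_{\alpha_0})_0)$ with $\alpha_0\geq q$, $\alpha_0\not\geq p$ (because $N(S_{\alpha_0})\subset F_p^iM$), which together with the square forces $f_{p,q,1}$ to be the inclusion of summands.
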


\begin{proof}
	
	If $p>\textbf{0}$ or $i>0$, using Lemma \ref{lem of filtration} and  Remark \ref{rem of excision}, we have
	\begin{align*}
	E_{1,p}^{i,j}&=H^{i+j}(F^iC^*(M,M- M_p)/F^{i+1}C^*(M,M-M_p))\\
	&=H^{i+j}(F_p^{i+1}M,F_p^{i}M)\\
	&=H^{i+j}(N_{p,i},N_{p,i,0}).
	\end{align*}
	   Now consider the map $f_{p,q,1}$ of $E_1$-page as mentioned in Definition \ref{presheaf of SS}. There is the following commutative diagram of spaces
	\[
	\begin{CD}
	(N_{q,i},N_{q,i,0}) @>\phi_q>> (F_q^{i+1}M,F_q^{i}M)\\
	@A\varphi AA @V\chi VV\\
	(N_{p,i},N_{p,i,0}) @>\phi_p>> (F_p^{i+1}M,F_p^{i}M)
	\end{CD}
	\]
	where $\phi_q,\phi_p$ are the inclusion maps induced by excision, and $\varphi,\chi$ are also  inclusion maps.  Then we have the following commutative diagram
	\[
	\begin{CD}
	\bigoplus_{\alpha\geq q,r(\alpha)=i}H^{i+j}(N(S_\alpha),N(S_\alpha)_0) @<\phi_q^*<\cong< E_{1,q}^{i,j}\\
	@V\varphi^* VV @A A \chi^*=f_{p,q,1} A\\
	\bigoplus_{\alpha\geq p,r(\alpha)=i}H^{i+j}(N(S_\alpha),N(S_\alpha)_0) @<\phi_p^*<\cong< E_{1,p}^{i,j}.
	\end{CD}
	\]
	Now we are going to show that $\phi_q^* \circ f_{p,q,1}\circ(\phi_p^*)^{-1}$ is an inclusion map. For this, we notice that the set $\{\alpha|\alpha\geq q,r(\alpha)=i\}$ can be divided into two disjoint parts: $\{\alpha|\alpha\geq p,r(\alpha)=i\}$ and $\{\alpha|\alpha\geq q,\alpha\ngeq p,r(\alpha)=i\}$.
 Let $\varphi_0:(N(S_{\alpha_0}),N(S_{\alpha_0})_0)\rightarrow (N_{q,i},N_{q,i,0})$ be the inclusion map for any $\alpha_0$ in the second part. We first show that $\varphi_0^*\circ \phi_q^* \circ f_{p,q,1}=0$. This follows from fact that $N(S_{\alpha_0})$ is contained in $F_p^{i}M$ because $F_p^{i}M=F_p^{i+1}M-\sqcup_{\alpha\geq p,r(\alpha)=i}S_\alpha$ and $N(S_{\alpha_0})\cap N(S_{\alpha})=\emptyset$ for any $\alpha$ lying in the first part; namely $\chi\circ\phi_q\circ \varphi_0$ maps $N(S_{\alpha_0})$ into $F_p^{i}M$.  Combining with the  above  commutative diagram, we see that $f_{p,q,1}$ must be the inclusion map under the isomorphisms $\phi_q^*$ and $\phi_p^*$,  i.e.,
	\begin{subnumcases}
		{E_{1,p}^{i,j}\cong}
		\bigoplus_{\alpha\geq p,r(\alpha)=i}H^{i+j}(N(S_\alpha),N(S_\alpha)_0), &~$i\geq r(p)$ \nonumber\\
		0 &else \nonumber
	\end{subnumcases}
	and $f_{p,q,1}$ is the inclusion map $$\bigoplus_{\alpha\geq p,r(\alpha)=i}H^{i+j}(N(S_\alpha),N(S_\alpha)_0)\hookrightarrow \bigoplus_{\alpha\geq q,r(\alpha)=i}H^{i+j}(N(S_\alpha),N(S_\alpha)_0)$$ under the sense of the above isomorphisms. Thus, $E_{1,*}^{*,*}(\mathcal{A})$ is isomorphic to the direct sum of sky-scraper presheaves as follows $$E_{1,*}^{i,j}(\mathcal{A})\cong \bigoplus_{r(\alpha)=i} j_{\alpha *}H^{i+j}(N(S_\alpha),N(S_\alpha)_0).$$
\end{proof}

\section{Construction of main model}
In Section~\ref{manifold arrangement}, for a manifold arrangement $\mathcal{A}$ with a quasi-intersection lattice $\mathfrak{L}$,
 the associated presheaf $\mathcal{C}(\mathcal{A})_p$   is chosen as a cochain complex for every $p\in \mathfrak{L}$, and the structure map $f_{p,q}$ is commutative with the differential $\delta$ of every complex $\mathcal{C}(\mathcal{A})_p$. 
\subsection{Presheaf of cochain complex}
Let $\mathcal{C}$ be a presheaf on a locally geometric poset $\mathfrak{L}$ in a general sense.
\begin{defn}
	We call $\mathcal{C}$ is a \textbf{presheaf of cochain complex} if for $p\in \mathfrak{L}$, $\mathcal{C}_p=\bigoplus_i \mathcal{C}_p^i$ is a cochain complex with the differential
	$$\delta : \mathcal{C}_p^i \rightarrow \mathcal{C}_p^{i+1} $$
	and $f_{p,q}$ is cochain map between cochain complexes.
\end{defn}

We have seen from  Definition \ref{OS complex} that $A^*(\mathfrak{L},\mathcal{C})$ is a  complex  with differential $\partial$, where $\partial$ is induced by the differential on $A^*([0,p])$. Now assume that $\mathcal{C}$ is a presheaf of cochain complex. Then we
can use the differential on $\mathcal{C}$ to define another differential on $A^*(\mathfrak{L},\mathcal{C})$ by $$\delta(x\otimes c)= (-1)^{r(p)}x\otimes\delta(c)$$ where $x\in A^*([0,p])_p, c\in \mathcal{C}_p^i$ and $\delta$ in the right side is the differential of complex $\mathcal{C}_p$. This means that $A^*(\mathfrak{L},\mathcal{C})$ becomes  a double complex.
\vskip .2cm
Furthermore, consider the operation
$\partial+ \delta$ on $A^*(\mathfrak{L},\mathcal{C})$. We claim that $\partial+ \delta$ is a differential on $A^*(\mathfrak{L},\mathcal{C})$.
It suffices to check that $(\delta\partial + \partial\delta)(x\otimes c)=0$ for $x\in A^*([\mathbf{0},p])_p$ and $c \in \mathcal{C}_p^i$.
In fact, let $\partial x=\sum_j x_j$ as in Definition \ref{OS complex} where $x_j\in A^*([\mathbf{0},p])_{p_j}$ and $p$ covers $p_j$. Then $$\delta\partial(x\otimes c)= (-1)^{r(p)-1}\sum_j x_j\otimes f_{p,p_j}\delta (c)$$ and $$\partial\delta(x\otimes c)= (-1)^{r(p)}\sum_j x_j\otimes f_{p,p_j}\delta (c)$$ by definition, from which follows that  $(\delta\partial + \partial\delta)(x\otimes c)=0$ as desired. As well-known,  $A^*(L,\mathcal{C})$ with $\partial+ \delta$ is called the \textbf{associated total complex}, denoted by $Tot(A^*(L,\mathcal{C}))$, and $\partial+ \delta$ is called the \textbf{total differential}, which is of degree $+1$. In addition, for $x\in A^*([\mathbf{0},p])_p$ and $c \in \mathcal{C}_p^i$, the total degree of $x\otimes c$ is defined as $\deg(x\otimes c)=i-r(p)$.

\vskip .2cm
 Combining the above arguments, we have that

\begin{prop}
Let $\mathcal{C}$ be a presheaf of cochain complex on a locally geometric poset $\mathfrak{L}$. Then
	$A^*(\mathfrak{L},\mathcal{C})$ naturally admits a double complex structure with two differential $\partial$ and $\delta$, and  the associated total complex  $Tot(A^*(\mathfrak{L},\mathcal{C}))$ is also a cochain complex with the total differential $\partial+ \delta$ of degree $+1$.
\end{prop}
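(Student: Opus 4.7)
The proposition packages three things to verify, which together make $(A^*(\mathfrak{L},\mathcal{C}),\partial,\delta)$ into a genuine double complex: $\partial^2=0$, $\delta^2=0$, and the anticommutation $\partial\delta+\delta\partial=0$. Once these hold, $(Tot(A^*(\mathfrak{L},\mathcal{C})),\partial+\delta)$ is automatically a cochain complex; and under the bigrading $A^{-i,j}:=\bigoplus_{r(p)=i}A^*([\mathbf{0},p])_p\otimes\mathcal{C}_p^j$ the operator $\partial$ has bidegree $(+1,0)$ and $\delta$ has bidegree $(0,+1)$, so $\partial+\delta$ has total degree $+1$. The plan is therefore simply to dispose of the three relations, each of which reduces to facts already available in the excerpt.

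For $\partial^2=0$ my approach is to reduce to the classical nilpotency of the OS-differential on each piece $A^*([\mathbf{0},p])$ recorded in Proposition~\ref{OS-property}(4). Expanding $\partial x=\sum_i x_i$ with $x_i\in A^*([\mathbf{0},p])_{p_i}$ and then $\partial x_i=\sum_j x_{i,j}$ with $x_{i,j}$ of rank $r(p)-2$, grouping the terms by the target element $q$ of rank $r(p)-2$ and invoking $\partial^2 x=0$ in the OS-algebra yields $\sum_{i}x_{i,j}=0$ for each such $q$. The presheaf relation $f_{p_i,q}\circ f_{p,p_i}=f_{p,q}$ makes the coefficient of each grouped sum independent of the intermediate $p_i$, so the whole double sum collapses. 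This is exactly the argument sketched just after Definition~\ref{OS complex}. For $\delta^2=0$ there is nothing to do: $\delta$ acts only on the second tensor factor and $\delta^2=0$ holds in every $\mathcal{C}_p$.

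The only delicate point is the anticommutation, and the sign $(-1)^{r(p)}$ in the definition of $\delta$ is engineered precisely to enforce it. Writing $\partial x=\sum_j x_j$ with $x_j\in A^*([\mathbf{0},p])_{p_j}$ of rank $r(p)-1$ and using that each $f_{p,p_j}$ is a cochain map, the definitions yield
\[
\partial\delta(x\otimes c)=(-1)^{r(p)}\sum_j x_j\otimes f_{p,p_j}\delta(c),\quad \delta\partial(x\otimes c)=(-1)^{r(p)-1}\sum_j x_j\otimes f_{p,p_j}\delta(c),
\]
and the two cancel termwise. This calculation is reproduced essentially verbatim in the paragraph immediately preceding the proposition. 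The main (and really only) obstacle is careful sign bookkeeping; once the three relations are in hand, squaring gives $(\partial+\delta)^2=\partial^2+(\partial\delta+\delta\partial)+\delta^2=0$, and the proposition follows.
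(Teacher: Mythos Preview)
Your proposal is correct and follows exactly the paper's approach: the proposition is stated as a summary of the discussion preceding it, and you have correctly identified that $\partial^2=0$ is the computation after Definition~\ref{OS complex}, $\delta^2=0$ is immediate, and $\partial\delta+\delta\partial=0$ is the sign computation in the paragraph just before the proposition. There is nothing to add.
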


It is well known that for every double complex, there are two filtrations and two spectral sequences associated with it. For the double complex $A^*(\mathfrak{L},\mathcal{C})$, we choose the "column-wise" filtration
\begin{equation}\label{col filt}
\tau^{-k}Tot(A^*(\mathfrak{L},\mathcal{C}))=\bigoplus_{q\text{ with }r(q)\leq k} A^*(\mathfrak{L},\mathcal{C})_q
\end{equation}
It is a decreasing filtration of modules, satisfying the condition  of \cite[Theorem  2.6]{McCleary2000}. Thus we have
\begin{cor}\label{SS0}
	The spectral sequence associated with $\tau^{-*}$ satisfying
	$$E_2^{-i,j}=H^{-i}(A^*(\mathfrak{L},H^{j}(\mathcal{C})),\partial)$$
	which converges to $H^*(Tot(A^*(\mathfrak{L},\mathcal{C})))$ as modules.
\end{cor}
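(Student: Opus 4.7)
The plan is to recognise this spectral sequence as the standard column-wise spectral sequence of the double complex $A^*(\mathfrak{L},\mathcal{C})$, and then read off its first two pages. Introduce the bidegree $(-r(p), i)$ on a generator $x \otimes c$ with $x \in A^*([\mathbf{0},p])_p$ and $c \in \mathcal{C}_p^i$, so that $\partial$ has bidegree $(+1,0)$, $\delta$ has bidegree $(0,+1)$, and the total degree is $i-r(p)$ as required. In this bigrading, $\tau^{-k}$ is exactly the subspace of bidegrees $(-\ell,\cdot)$ with $\ell \le k$, which is precisely the standard column-wise decreasing filtration; it is bounded above (since $r(q) \ge 0$ forces $\tau^{n}=0$ for $n\ge 1$) and exhaustive from below.

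Next I would compute the $E_0$ and $E_1$ pages. On the associated graded quotient
$$\tau^{-k}/\tau^{-k+1} \;\cong\; \bigoplus_{r(p)=k} A^*([\mathbf{0},p])_p \otimes \mathcal{C}_p,$$
the induced differential $d_0$ equals $\delta$ (up to the sign $(-1)^{r(p)}$ in its definition): the $\partial$-piece strictly raises the column index and so dies in the quotient, while $\delta$ preserves the column. Because each $A^*([\mathbf{0},p])_p$ is a free $R$-module by Proposition~\ref{OS-property}(1), taking $\delta$-cohomology commutes with the tensor product and yields
$$E_1^{-k,q} \;\cong\; \bigoplus_{r(p)=k} A^*([\mathbf{0},p])_p \otimes H^q(\mathcal{C}_p) \;=\; A^*(\mathfrak{L}, H^q(\mathcal{C}))_{-k},$$
where $H^q(\mathcal{C})$ is the pointwise $q$-th cohomology, a bona fide presheaf on $\mathfrak{L}$ because each $f_{p,q'}$ is a cochain map. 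The induced $d_1$-differential has bidegree $(+1,0)$ and unravels to precisely the $\partial$ of Definition~\ref{OS complex} applied to the presheaf $H^q(\mathcal{C})$; hence $E_2^{-i,j} = H^{-i}(A^*(\mathfrak{L}, H^j(\mathcal{C})), \partial)$ as claimed. Convergence to $H^*(Tot(A^*(\mathfrak{L},\mathcal{C})))$ as modules follows directly from \cite[Theorem~2.6]{McCleary2000}.

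I do not expect a genuine obstacle here: the argument is a routine filtration-by-columns computation, and the only substantive input is the freeness of each $A^*([\mathbf{0},p])_p$, which is what lets us commute $\delta$-cohomology past the tensor product. The one point worth checking carefully is that the filtration is regular in each total degree (i.e., each $H^n$ only receives contributions from finitely many columns); this is automatic for the intended applications to $\mathcal{C}(\mathcal{A})$ for a finite manifold arrangement with $\mathcal{C}_p^i$ concentrated in bounded non-negative degrees, and should be recorded as a standing hypothesis if one wants the corollary in full generality.
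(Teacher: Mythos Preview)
Your proposal is correct and follows exactly the approach the paper intends: the paper simply asserts that $\tau^{-*}$ is a decreasing filtration satisfying the hypotheses of \cite[Theorem~2.6]{McCleary2000} and states the corollary without further detail, so you have merely spelled out the standard column-wise spectral sequence computation that the paper leaves implicit. Your observation about needing freeness of $A^*([\mathbf{0},p])_p$ to pass $\delta$-cohomology through the tensor, and your caveat about regularity of the filtration, are both valid refinements that the paper does not make explicit.
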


\subsection{Double complex model of manifold arrangements}\

For a manifold arrangement $\mathcal{A}$ in $M$, clearly $\mathcal{C}(\mathcal{A})$ is a presheaf of cochain complex. Then $A^*(\mathfrak{L},\mathcal{C}(\mathcal{A}))$ has a double complex structure. Furthermore, $A^*([p,\infty),\mathcal{C}(\mathcal{A}))$ is also a double complex for any $p\in \mathfrak{L}$ since $[p,\infty)$ is also a locally geometric poset.

The results in this subsection is not new, covered by \cite{Tosteson2016} or \cite{Petersen2017}, but it is written here for a local completeness so as to consider the case with multiplicative structure in Section 5 by doing an expansion. 

\begin{thm}\label{main}
	$H^*(Tot(A^*([p,\infty),\mathcal{C}(\mathcal{A}))))$ and $H^*(N(S_p),N(S_p)_0)$ are isomorphic as module.
\end{thm}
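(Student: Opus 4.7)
The plan is to build a spectral sequence from the geometric filtration $F^*\mathcal{C}(\mathcal{A})$ of Definition~\ref{geo filt} on the total complex, and localize its $E_1$-page at $\alpha=p$ using Theorem~\ref{presheaf of E1} and Lemma~\ref{sky}.

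First, I would extend the filtration of each cochain complex $\mathcal{C}(\mathcal{A})_q$ to a filtration of the double complex by setting
$$G^i := \bigoplus_{q \in [p,\infty)} A^*([p,q])_q \otimes F^i\mathcal{C}(\mathcal{A})_q.$$
The differential $\delta$ preserves $G^i$ because each $F^i\mathcal{C}(\mathcal{A})_q$ is a subcomplex, and $\partial$ preserves $G^i$ because the structure map $f_{q_1,q_2}$ sends $F^i\mathcal{C}(\mathcal{A})_{q_1}$ into $F^i\mathcal{C}(\mathcal{A})_{q_2}$ whenever $q_1\ge q_2$ (an easy check using $F_{q_2}^iM\subset F_{q_1}^iM$, which itself follows from the containment $\{\alpha\ge q_1\}\subset\{\alpha\ge q_2\}$). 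Since $\mathfrak{L}$ is finite the filtration $G^*$ is bounded, so the associated spectral sequence converges to $H^*(Tot(A^*([p,\infty),\mathcal{C}(\mathcal{A}))))$.

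On the associated graded $E_0^i = G^i/G^{i+1} = Tot(A^*([p,\infty),V^i))$, where $V^i_q := F^i\mathcal{C}(\mathcal{A})_q/F^{i+1}\mathcal{C}(\mathcal{A})_q$ forms a presheaf of cochain complexes, the differential $d_0$ is the total differential of this smaller double complex. To compute $E_1^i = H^*(E_0^i,d_0)$ I would apply the column-wise spectral sequence of Corollary~\ref{SS0} to $A^*([p,\infty),V^i)$: its $E_2$-page is $H^{-k}(A^*([p,\infty), H^j(V^i)),\partial)$, and Theorem~\ref{presheaf of E1} identifies the cohomology presheaf $H^j(V^i)\cong \bigoplus_{r(\alpha)=i} j_{\alpha*}H^j(N(S_\alpha),N(S_\alpha)_0)$ as a direct sum of sky-scraper presheaves.

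The key input is now Lemma~\ref{sky}, which forces $A^*([p,\infty),j_{\alpha*}A)$ to be exact unless $\alpha=p$, in which case its degree-$0$ cohomology is $A$. Thus the auxiliary $E_2$-page vanishes except when $r(\alpha)=i=r(p)$ and $k=0$, where it equals $H^j(N(S_p),N(S_p)_0)$. The auxiliary spectral sequence collapses at $E_2$, giving $H^*(E_0^i,d_0) = 0$ for $i\ne r(p)$ and $H^*(E_0^{r(p)},d_0) \cong H^*(N(S_p),N(S_p)_0)$. Since the main $E_1$-page is concentrated in the single column $i = r(p)$, all higher differentials $d_r$ ($r\ge 1$) vanish for bidegree reasons, the main spectral sequence collapses at $E_1$, and the filtration on the abutment reduces to this single surviving column, yielding the desired isomorphism as modules.

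The main technical obstacle I anticipate is the careful tracking of the three gradings (the OS-degree $r(q)$, the cochain degree on $\mathcal{C}(\mathcal{A})_q$, and the geometric-filtration index $i$) through the nested spectral sequences, ensuring the total-degree shifts match so the answer sits in the correct cohomological degree of $N(S_p)$. Once the containment $F_{q_2}^iM\subset F_{q_1}^iM$ and the vanishing from Lemma~\ref{sky} are in hand, the collapse of both spectral sequences and the identification of the abutment are essentially degree-counting.
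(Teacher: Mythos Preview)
Your proposal is correct and follows essentially the same approach as the paper: filter the total complex by the geometric filtration $F^i\mathcal{C}(\mathcal{A})$, identify the associated graded via Theorem~\ref{presheaf of E1} as built from sky-scraper presheaves, and invoke Lemma~\ref{sky} to collapse everything onto the single column $i=r(p)$. The only notable difference is that the paper goes one step further and exhibits the isomorphism as induced by an explicit quotient map $Tot(A^*([p,\infty),\mathcal{C}(\mathcal{A})))\to C^*(F_p^{r(p)+1}M,F_p^{r(p)}M)$ (using that the quotient presheaf $F^{r(p)}\mathcal{C}(\mathcal{A})/F^{r(p)+1}\mathcal{C}(\mathcal{A})$ is concentrated at $p$ when restricted to $[p,\infty)$); this concreteness is not needed for the theorem as stated but becomes essential in Corollary~\ref{main cor} and for the algebra version in Theorem~\ref{main alg}.
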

\begin{proof} Consider the filtration of $\mathcal{C}(\mathcal{A})$ in Definitin~\ref{presheaf of SS}.
	Denote $F^i\mathcal{C}(\mathcal{A}): p\longmapsto F^i\mathcal{C}(\mathcal{A})_p$. Then  $F^i\mathcal{C}(\mathcal{A})$ is also a presheaf of cochain complex on $\mathfrak{L}$ since the
	inclusion map $f_{p,q}$ is compatible with the filtration $F^i\mathcal{C}(\mathcal{A})_p$. Moreover, we may
 use $F^i\mathcal{C}(\mathcal{A})$ to give  a filtration $Tot(A^*([p,\infty),F^i\mathcal{C}(\mathcal{A})))$ of $Tot(A^*([p,\infty),\mathcal{C}(\mathcal{A})))$, also denoted by 
 $F^iTot(A^*([p,\infty),$ $\mathcal{C}(\mathcal{A})))$.

 \vskip .2cm Now we calculate $H^*(Tot(A^*([p,\infty),\mathcal{C}(\mathcal{A}))))$ by this filtration. By Definitin~\ref{presheaf of SS}, we see that $$F^i\mathcal{C}(\mathcal{A})_p /F^{i+1}\mathcal{C}(\mathcal{A})_p= C^*(F_p^{i+1}M,F_p^{i}M)$$ which is also a presheaf of cochain complex on $\mathfrak{L}$ when $p$ runs over $\mathfrak{L}$.  Then $E_0$-term is
$$F^iTot(A^*([p,\infty),\mathcal{C}(\mathcal{A})))/F^{i+1}Tot(A^*([p,\infty),\mathcal{C}(\mathcal{A})))=Tot(A^*([p,\infty),F^i\mathcal{C}(\mathcal{A}) /F^{i+1}\mathcal{C}(\mathcal{A})))$$
We may calculate the homology of this total complex by "calculating homology twice" as seen in \cite[Theorem 2.15]{McCleary2000}.  Firstly let us calculate the homology under differential $\delta$:
	$$H^{i+j}(A^*([p,\infty),F^i\mathcal{C}(\mathcal{A}) /F^{i+1}\mathcal{C}(\mathcal{A})),\delta)=A^*([p,\infty),E_{1,*}^{i,j}(\mathcal{A}))$$
	where $E_{1,*}^{i,j}(\mathcal{A}) \cong \bigoplus_{r(\alpha)=i} j_{\alpha *}H^{i+j}(N(S_\alpha),N(S_\alpha)_0)$ is the direct sum of some sky-scraper presheaves as we calculated in Theorem \ref{presheaf of E1}. Secondly we calculate the homology under differential $\partial$ as follows: $H^{-*}(A^*([p,\infty),E_{1,*}^{i,j}(\mathcal{A})),\partial)=0$ for all $i\neq r(p)$  by Lemma \ref{sky}, which means  that $H^*(Tot(A^*([p,\infty),F^i\mathcal{C}(\mathcal{A}) /F^{i+1}\mathcal{C}(\mathcal{A}))))$ vanishes for all $i\neq r(p)$; in other words, the $E_1^{i,j}$-term of filtration $F^iTot(A^*([p,\infty),\mathcal{C}(\mathcal{A})))$ vanishes for all $i\neq r(p)$. So the quotient map $$Tot(A^*([p,\infty),F^{r(p)}\mathcal{C}(\mathcal{A})))\rightarrow Tot(A^*([p,\infty),F^{r(p)}\mathcal{C}(\mathcal{A})/F^{r(p)+1}\mathcal{C}(\mathcal{A})))$$ induces an isomorphism of cohomologies.  When restricted on $[p,\infty)$, since $F^{r(p)}\mathcal{C}(\mathcal{A})_\alpha$ always equals $C^*(M,M-M_p)$ for all $\alpha\geq p$ by the definition of $F^*\mathcal{C}(\mathcal{A})$, we see that $F^{r(p)}\mathcal{C}(\mathcal{A})/F^{r(p)+1}\mathcal{C}(\mathcal{A})$ can only  have  nonzero elements of $C^*(F^{r(p)+1}_pM,F^{r(p)}_pM)$ on $p$,  and zero otherwise. So $$Tot(A^*([p,\infty),F^{r(p)}\mathcal{C}(\mathcal{A})/F^{r(p)+1}\mathcal{C}(\mathcal{A})))=
(C^*(F^{r(p)+1}_pM,F^{r(p)}_pM),\delta).$$
Combining this equality and above quotient map, we conclude that the map $$Tot(A^*([p,\infty),\mathcal{C}(\mathcal{A})))\rightarrow C^*(F^{r(p)+1}_pM,F^{r(p)}_pM)$$ induces an isomorphism of cohomologies. On the other hand, we know by Remark \ref{rem of excision} that $H^*(F^{r(p)+1}_pM,F^{r(p)}_pM)\cong H^*(N(S_p),N(S_p)_0)$. This completes the proof.
\end{proof}

\begin{rem}
	Theorem~\ref{main} gives the equivalence expression of  $H^*(N(S_p),N(S_p)_0)$ for every $p$  rather than  only $H^*(\mathcal{M}(\mathcal{A}))$.
\end{rem}

Let $p=\textbf{0}$ be minimal element in $\mathfrak{L}$. Then we have following corollary
\begin{cor}\label{main cor}
$ H^*(Tot(A^*(\mathfrak{L},\mathcal{C}(\mathcal{A}))))$ and $H^*(\mathcal{M}(\mathcal{A}))$ are isomorphic as modules. In particular, this isomorphism is actually induced by the quotient map $A^*(\mathfrak{L},\mathcal{C}(\mathcal{A})) \rightarrow C^*(\mathcal{M}(\mathcal{A}))$ that maps $A^*(\mathfrak{L},\mathcal{C}(\mathcal{A}))_q$ to zero for $q>\mathbf{0}$ and  $A^*(\mathfrak{L},\mathcal{C}(\mathcal{A}))_\mathbf{0}=C^*(M)$ to $C^*(\mathcal{M}(\mathcal{A}))$ (naturally induced by the inclusion $\mathcal{M}(\mathcal{A}) \hookrightarrow M$).
\end{cor}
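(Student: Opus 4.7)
The plan is to obtain Corollary \ref{main cor} as a direct specialization of Theorem \ref{main} to the minimal element $p=\mathbf{0}$, and then to upgrade the qualitative statement to the explicit quotient-map description by tracing through the proof of Theorem \ref{main}.

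First I would set $p=\mathbf{0}$, so that $[p,\infty)=\mathfrak{L}$ and $r(\mathbf{0})=0$. The key topological identification is $S_\mathbf{0}=M_\mathbf{0}-\bigcup_{q>\mathbf{0}}M_q = M-\bigcup_{q>\mathbf{0}}M_q=\mathcal{M}(\mathcal{A})$, which is an open subset of $M$; thus $S_\mathbf{0}$ has codimension zero in $M$, and by the tubular neighborhood convention stated just before Remark \ref{rem of excision} we have $N(S_\mathbf{0})=S_\mathbf{0}=\mathcal{M}(\mathcal{A})$ and $N(S_\mathbf{0})_0=\emptyset$. Consequently $H^*(N(S_\mathbf{0}),N(S_\mathbf{0})_0)=H^*(\mathcal{M}(\mathcal{A}))$, so Theorem \ref{main} immediately gives the module isomorphism $H^*(Tot(A^*(\mathfrak{L},\mathcal{C}(\mathcal{A}))))\cong H^*(\mathcal{M}(\mathcal{A}))$.

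Next I would identify the explicit map realizing this isomorphism by revisiting the filtration argument in the proof of Theorem \ref{main}. The isomorphism there is produced by the composition of the quotient map $Tot(A^*(\mathfrak{L},F^{r(p)}\mathcal{C}(\mathcal{A})))\to Tot(A^*(\mathfrak{L},F^{r(p)}\mathcal{C}(\mathcal{A})/F^{r(p)+1}\mathcal{C}(\mathcal{A})))$ with the excision isomorphism. For $p=\mathbf{0}$ we have $r(p)=0$ and $F^{0}\mathcal{C}(\mathcal{A})=\mathcal{C}(\mathcal{A})$, so the map under consideration is already defined on the whole double complex. I would then compute the two relevant filtration stages at $\mathbf{0}$: since the only rank-$0$ element is $\mathbf{0}$ itself, we get $F_\mathbf{0}^{0}M=M-M_\mathbf{0}=\emptyset$; and because every $q>\mathbf{0}$ dominates some atom, we obtain $F_\mathbf{0}^{1}M=M-\bigcup_{q:r(q)=1}M_q=M-\bigcup_{q>\mathbf{0}}M_q=\mathcal{M}(\mathcal{A})$. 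Hence at the $\mathbf{0}$-spot the quotient presheaf reads $F^{0}\mathcal{C}(\mathcal{A})_\mathbf{0}/F^{1}\mathcal{C}(\mathcal{A})_\mathbf{0}=C^*(\mathcal{M}(\mathcal{A}),\emptyset)=C^*(\mathcal{M}(\mathcal{A}))$, and it vanishes at every $q>\mathbf{0}$ because $F^{0}\mathcal{C}(\mathcal{A})_q=F^{1}\mathcal{C}(\mathcal{A})_q$ in that range.

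To finish, I would combine these pieces with the fact that $A^*([\mathbf{0},\mathbf{0}])_\mathbf{0}\cong R$ is generated by the unit, so the induced quotient map $A^*(\mathfrak{L},\mathcal{C}(\mathcal{A}))\to Tot(A^*(\mathfrak{L},F^{0}\mathcal{C}(\mathcal{A})/F^{1}\mathcal{C}(\mathcal{A})))=C^*(\mathcal{M}(\mathcal{A}))$ kills every summand $A^*(\mathfrak{L},\mathcal{C}(\mathcal{A}))_q$ with $q>\mathbf{0}$ and restricts on the $\mathbf{0}$-summand $C^*(M)=C^*(M,\emptyset)$ to the map $C^*(M)\to C^*(\mathcal{M}(\mathcal{A}))$ induced by the open inclusion $\mathcal{M}(\mathcal{A})\hookrightarrow M$. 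The only point requiring a little care is that Theorem \ref{main} was stated as an abstract isomorphism, so I need to verify that its proof actually witnesses the isomorphism by this specific quotient map rather than by some auxiliary zig-zag; this is immediate from the spectral sequence collapse argument there, since the isomorphism arises as a composition of canonical quotient and excision maps with no intermediate inverse. I expect no serious obstacle: the main mild subtlety is the verification $\bigcup_{q>\mathbf{0}}M_q=\bigcup_{r(q)=1}M_q$, which follows because every non-minimal $q$ dominates an atom and the corresponding quasi-layer $M_q$ is then contained in $M_{q'}$ for that atom $q'$.
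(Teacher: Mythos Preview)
Your proposal is correct and follows essentially the same route as the paper: you specialize Theorem \ref{main} to $p=\mathbf{0}$, identify $N(S_\mathbf{0})=\mathcal{M}(\mathcal{A})$ and $N(S_\mathbf{0})_0=\emptyset$, compute $F_\mathbf{0}^0M=\emptyset$ and $F_\mathbf{0}^1M=\mathcal{M}(\mathcal{A})$, and then read off that the quotient presheaf $\mathcal{C}(\mathcal{A})/F^1\mathcal{C}(\mathcal{A})$ is $C^*(\mathcal{M}(\mathcal{A}))$ at $\mathbf{0}$ and zero above, so that the quotient map has the stated form. Your added remarks (the verification that $\bigcup_{q>\mathbf{0}}M_q=\bigcup_{r(q)=1}M_q$ and that the isomorphism in Theorem \ref{main} is realized by the explicit quotient rather than a zig-zag) are helpful clarifications but do not change the argument.
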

\begin{proof}
	Let $p=\textbf{0}$ in the proof of Theorem~\ref{main}. We note that $N(S_{\textbf{0}})=S_{\textbf{0}}=\mathcal{M}(\mathcal{A})$, $N(S_{\textbf{0}})_0=\emptyset$, $F^{1}_{\textbf{0}}M=\mathcal{M}(\mathcal{A})$, and $F^{0}_{\textbf{0}}M=\emptyset$. Then the quotient map in the proof of Theorem~\ref{main} $$Tot(A^*([p,1],F^{r(p)}\mathcal{C}(\mathcal{A})))\rightarrow Tot(A^*([p,1],F^{r(p)}\mathcal{C}(\mathcal{A})/F^{r(p)+1}\mathcal{C}(\mathcal{A})))$$ becomes $$Tot(A^*(L,\mathcal{C}(\mathcal{A})))\rightarrow Tot(A^*(L,\mathcal{C}(\mathcal{A})/F^{1}\mathcal{C}(\mathcal{A}))).$$
Now, since  the quotient presheaf $\mathcal{C}(\mathcal{A})_p/F^{1}\mathcal{C}(\mathcal{A})_p$ is zero if $p>\textbf{0}$ and be $C^*(\mathcal{M}(\mathcal{A})) $ if $p=\mathbf{0}$, we see that the above quotient map would be a zero-morphism if  $p>\mathbf{0}$ and the quotient map $C^*(M) \rightarrow C^*(\mathcal{M}(\mathcal{A}))$ induced by the inclusion $\mathcal{M}(\mathcal{A}) \hookrightarrow M$ if  $p=\mathbf{0}$.
\end{proof}

Consider the spectral sequence associated with the double complex $A^*([p,\infty),\mathcal{C}(\mathcal{A}))$, we have an immediate corollary.
\begin{cor}\label{immed cor}
Associated with double complex $A^*([p,\infty),\mathcal{C}(\mathcal{A}))$, there is a spectral sequence  with $$E_1^{-i,j}=A^*([p,\infty),H^{j}(\mathcal{A}))_{-i}$$
 $$E_2^{-i,j}=H^{-i}(A^*([p,\infty),H^{j}(\mathcal{A})),\partial)$$ converges to $H^*(N(S_p),N(S_p)_0)$ as modules. In particular, for $p=\textbf{0}$, there exists a spectral sequence with  $$E_2^{-i,j}=H^{-i}(A^*(\mathfrak{L},H^{j}(\mathcal{A})),\partial)$$ converges to $H^*(\mathcal{M}(\mathcal{A}))$ as modules. Recall that $H^*(\mathcal{A})_p:=H^*(M,M-M_p)$.
\end{cor}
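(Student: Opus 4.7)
The plan is to recognize this corollary as essentially a direct combination of two ingredients already established in the excerpt: the general spectral sequence of a column-filtered double complex from Corollary \ref{SS0}, and the convergence identification supplied by Theorem \ref{main} (together with Corollary \ref{main cor} for the special case $p=\mathbf{0}$). Since $[p,\infty)$ is a locally geometric poset and $\mathcal{C}(\mathcal{A})|_{[p,\infty)}$ is a presheaf of cochain complexes on it, the double complex machinery from Section 4 applies verbatim to $A^*([p,\infty),\mathcal{C}(\mathcal{A}))$.

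First I would fix the column-wise filtration $\tau^{-k}$ of $Tot(A^*([p,\infty),\mathcal{C}(\mathcal{A})))$ from equation (\ref{col filt}), and write down the $E_0$ page as the associated graded, which at bidegree $(-i,j)$ is $\bigoplus_{r(q)=i}A^*([\mathbf{0},q])_q\otimes \mathcal{C}(\mathcal{A})_q^{i+j}$ restricted to $q\geq p$, with vertical differential induced solely by $\delta$ (the $\partial$-contribution drops one column and thus vanishes on the graded piece). Since $\delta$ acts only on the $\mathcal{C}(\mathcal{A})_q$-factor and $H^{j}(\mathcal{C}(\mathcal{A})_q)=H^{j}(M,M-M_q)=H^{j}(\mathcal{A})_q$ by definition, taking vertical cohomology yields
\[
E_1^{-i,j}=\bigoplus_{q\in[p,\infty),\,r(q)=i}A^*([\mathbf{0},q])_q\otimes H^{j}(\mathcal{A})_q = A^*([p,\infty),H^{j}(\mathcal{A}))_{-i},
\]
exactly as claimed. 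The $d_1$-differential on this page is the residual differential induced by $\partial$, which by construction is the differential of the complex $(A^*([p,\infty),H^{j}(\mathcal{A})),\partial)$ in the sense of Definition \ref{OS complex}. Hence
\[
E_2^{-i,j}=H^{-i}\bigl(A^*([p,\infty),H^{j}(\mathcal{A})),\partial\bigr).
\]

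For convergence, I would invoke Theorem \ref{main}, which gives $H^*(Tot(A^*([p,\infty),\mathcal{C}(\mathcal{A}))))\cong H^*(N(S_p),N(S_p)_0)$ as modules. The standard convergence hypothesis of \cite[Theorem 2.6]{McCleary2000} is satisfied for $\tau^{-*}$ because the filtration is bounded in each total degree (only finitely many $q\in [p,\infty)$ contribute below a given rank, since $\mathfrak{L}$ is finite). The particular case $p=\mathbf{0}$ is then read off from Corollary \ref{main cor}, since $N(S_\mathbf{0})=\mathcal{M}(\mathcal{A})$ and $N(S_\mathbf{0})_0=\emptyset$.

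No real obstacle arises; the only slightly non-routine bookkeeping is verifying that $d_1$ on the $E_1$ page really coincides with the differential $\partial$ of Definition \ref{OS complex} — this is essentially a sign/indexing check using that the differential on $A^*(\mathfrak{L},\mathcal{C})$ is $\partial+\delta$ and that $\partial$ decreases the column index by one while preserving the cohomological degree of the $\mathcal{C}(\mathcal{A})$-factor. Once this is written out carefully, the corollary follows.
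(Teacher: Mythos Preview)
Your proposal is correct and matches the paper's approach: the paper presents this as an immediate corollary obtained by applying the column-wise spectral sequence of Corollary~\ref{SS0} to the double complex $A^*([p,\infty),\mathcal{C}(\mathcal{A}))$ and then invoking Theorem~\ref{main} (and Corollary~\ref{main cor} for $p=\mathbf{0}$) to identify the target. Your additional unpacking of the $E_1$ page and the identification of $d_1$ with $\partial$ is correct bookkeeping that the paper leaves implicit.
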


Using the approach developed in this section, we can easily reprove Zaslavsky's result \cite{Zaslavsky1975} about the $f$-polynomials of hyperplane arrangements in $M=\mathbb{R}^d$.
\begin{cor}\label{f-poly}
	Let $\mathcal{A}$ be a central hyperplane arrangement in $\mathbb{R}^d$ with intersection lattice $L$. Then the number of $k$-faces equals $$\sum_{p\in L,r(p)=d-k}\dim(A^*([p,\mathbf{1}]))$$ where $A^*([p,\mathbf{1}])$ is the canonical OS-algebra of $[p,\mathbf{1}]$ with any field coefficients.
\end{cor}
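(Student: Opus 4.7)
The plan is to reduce the counting of $k$-faces to computing the zeroth cohomology of the complement of an induced central arrangement, and then evaluate that cohomology via the spectral sequence of Corollary~\ref{immed cor}. First I would observe that each $k$-face of $\mathcal{A}$ lies on exactly one $M_p$ with $r(p) = d - k$, and is a connected component of $S_p = \mathcal{M}(\mathcal{A}_p)$, where $\mathcal{A}_p$ is the induced central hyperplane arrangement in $M_p \cong \mathbb{R}^k$. Each such component is an open convex cone, hence contractible, so over any field $R$ the dimension $\dim_R H^0(\mathcal{M}(\mathcal{A}_p))$ equals the number of $k$-faces of $\mathcal{A}$ contained in $M_p$, while $H^j(\mathcal{M}(\mathcal{A}_p)) = 0$ for $j > 0$. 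It therefore suffices to prove $\dim H^0(\mathcal{M}(\mathcal{A}_p)) = \dim A^*([p, \mathbf{1}])$ for every such $p$.

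Next I would compute the $E_1$ (and, as will be seen, $E_2$) page of the spectral sequence in Corollary~\ref{immed cor} applied to the sub-arrangement $\mathcal{A}_p$, whose poset is $[p, \mathbf{1}]$ with minimum element $p$. For any $q \in [p, \mathbf{1}]$, the subspace $M_q \subseteq M_p$ is a linear subspace of codimension $r(q) - r(p)$ with trivial (hence orientable) normal bundle, so by the Thom isomorphism
\[
H^j(\mathcal{A}_p)_q \;=\; H^j(M_p, M_p - M_q) \;=\; \begin{cases} R & \text{if } j = r(q) - r(p), \\ 0 & \text{otherwise.} \end{cases}
\]
For each fixed $j$, the presheaf $H^j(\mathcal{A}_p)$ on $[p, \mathbf{1}]$ is thus supported only on elements of rank $r(p) + j$, and every structure map $f_{q, q'}$ with $q > q'$ vanishes for rank reasons. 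Consequently $\partial \equiv 0$ on the complex $A^*([p, \mathbf{1}], H^j(\mathcal{A}_p))$, and
\[
E_2^{-i, j} \;\cong\; \begin{cases} \bigoplus_{r(q) - r(p) = i} A^*([p, q])_q & \text{if } i = j, \\ 0 & \text{if } i \neq j. \end{cases}
\]

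Finally I would establish degeneration and conclude. Since $E_2$ is concentrated on the diagonal $i = j$ (equivalently, on total degree $j - i = 0$), any differential $d_r$ with $r \geq 2$ shifts $(-i, j) \mapsto (-i + r, j - r + 1)$; the target lies on the diagonal only when $j = i + 1$, which contradicts the source condition $i = j$, so every $d_r$ with $r \geq 2$ vanishes. The spectral sequence therefore collapses at $E_2$, and
\[
\dim H^0(\mathcal{M}(\mathcal{A}_p)) \;=\; \sum_{i \geq 0} \dim E_\infty^{-i, i} \;=\; \sum_{q \in [p, \mathbf{1}]} \dim A^*([p, q])_q \;=\; \dim A^*([p, \mathbf{1}]),
\]
where the last equality uses the decomposition of Proposition~\ref{OS-property}(1) together with the identifications $A^*([p, q])_q \cong A^*([p, \mathbf{1}])_q$ from Proposition~\ref{OS-property}(3). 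Summing over $p$ with $r(p) = d - k$ produces the asserted formula. I expect the only delicate point to be the degeneration claim; its essence is that $H^j(\mathcal{A}_p)_q$ is concentrated in the single cohomological degree $j = r(q) - r(p)$, which forces the entire spectral sequence to live on a single diagonal and all higher differentials to vanish purely by bidegree.
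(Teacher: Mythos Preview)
Your proposal is correct and follows essentially the same approach as the paper: reduce to counting components of $S_p$ for each $p$ with $r(p)=d-k$, apply the spectral sequence of Corollary~\ref{immed cor}, observe that $H^j(\mathcal{A}_p)_q$ is concentrated in a single degree (so the structure maps vanish and the spectral sequence collapses for bidegree reasons), and sum dimensions to get $\dim A^*([p,\mathbf{1}])$. The only cosmetic difference is that the paper phrases the target as $H^*(N(S_p),N(S_p)_0)$ and invokes Thom isomorphism once more, while you work directly with $H^*(\mathcal{M}(\mathcal{A}_p))$; the spectral-sequence computation is identical.
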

\begin{proof}
	Choose quasi-intersection poset to be intersection lattice of $\mathcal{A}$. Every $k$-face is a region of some $S_p$ with $r(p)=d-k$.  Then the number of $k$-faces equals\\ $\sum_{p\in L,r(p)=d-k}\dim(H^*(N(S_p),N(S_p)_0))$ by Thom isomorphism. Since $M_p=\mathbb{R}^{d-r(p)}$, the structure map $f_{p,q}^*$ of $H^*(\mathcal{A})$ is zero and the spectral sequence in Corollary \ref{immed cor} collapses on $E^1$-term because of the dimensional  reason, so $\dim(H^*(N(S_p),N(S_p)_0))=\dim(A^*([p,\mathbf{1}]))$.
\end{proof}

\begin{rem}
	Above corollary agrees with the original description of Zaslavsky \cite{Zaslavsky1975}. In this view point,  Theorem \ref{main} essentially considers not only "regions (or chambers)" but also the information of all "faces" of a manifold arrangement $\mathcal{A}$, so it can be regarded as  a topological generalization of Zaslavsky's $f$-polynomial.
\end{rem}

\subsection{The relationship with mixed Hodge structure}\label{hodge}~\\

 In \cite{Petersen2017}, Petersen obtained a spectral sequence of mixed Hodge structures in the complex algebraic setting.
In this subsection, we discuss the mixed Hodge structure of our model, by a different approach.

\vskip .2cm
Let $R$ be a noetherian subring of $\mathbb{C}$ such that $R\otimes \mathbb{Q}$ is a field. A Hodge $R$-complex $K$ of weight $n$ is a system $$[K_R,(K_{\mathbb{C}},F),\alpha:K_R\otimes\mathbb{C}\simeq K_{\mathbb{C}}]$$ where $K_R$ is a $R$-complex with finite type cohomology, $K_{\mathbb{C}}$ is a $\mathbb{C}$-complex with decreasing filtration $F$ and $\alpha$ is an isomorphism in $D^+(\mathbb{C})$, satisfying: (1) the differential of $K_{\mathbb{C}}$ is strictly compatible with filtration $F$; (2) the induced filtration F on $H^k(K_{\mathbb{C}}) $ defines a pure Hodge structure of weight $n+k$.
\vskip .2cm
A $R$-mixed Hodge complex (MHC) $K$ is a system
$$[K_R;(K_{R\otimes\mathbb{Q}},W),\alpha:K_R\otimes\mathbb{Q}\simeq K_{R\otimes\mathbb{Q}};(K_{\mathbb{C}},W,F),\beta:(K_{R\otimes\mathbb{Q}},W)\otimes\mathbb{C}\simeq (K_{\mathbb{C}},W)]$$ where $K_R$ is a $R$-complex with finite type cohomology, $K_{R\otimes\mathbb{Q}}$ is a $R\otimes\mathbb{Q}$-complex with increasing filtration $W$ and $\alpha$ is an isomorphism in derived category of $R\otimes\mathbb{Q}$-module, $K_{\mathbb{C}}$ is a $\mathbb{C}$-complex with increasing filtration $W$ and decreasing filtration $F$ and $\beta$ is an isomorphism in derived category of filtrated $\mathbb{C}$-module, satisfying: the $n$-th graded piece $Gr_n^WK=[Gr_n^WK_{R\otimes\mathbb{Q}},Gr_n^W(K_{\mathbb{C}},F),Gr_n^W\beta]$ is a $R\otimes\mathbb{Q}$-Hodge complex of weight $n$ for every $n$.
\vskip .2cm
There is a functorial MHC associated with any smooth complex algebraic variety $U$ (see \cite[Theorem 4.8]{Zein2017}) , denoted by $K(U)$, which induces the canonical mixed Hodge structure on $H^*(U)$. For more details about MHC and the definition of morphism of MHC, we refer to \cite{Board2008}.
\vskip .2cm
The model $Tot(A^*(\mathfrak{L},\mathcal{C}))$ will be a mixed Hodge complex if $\mathcal{C}$ is a presheaf of mixed Hodge complex, i.e., all $f_{p,q}:\mathcal{C}_p \rightarrow \mathcal{C}_q$ are morphisms of mixed Hodge complex.
\begin{defn}
	Assume $\mathcal{C}$ is a presheaf of mixed Hodge complex. Namely
	$$\mathcal{C}=[\mathcal{C}_{R};(\mathcal{C}_{R\otimes\mathbb{Q}},W),\alpha:\mathcal{C}_{R}\otimes\mathbb{Q}\simeq \mathcal{C}_{R\otimes\mathbb{Q}};(\mathcal{C}_{\mathbb{C}},W,F),\beta:(\mathcal{C}_{R\otimes\mathbb{Q}},W)\otimes\mathbb{C}\simeq (\mathcal{C}_{\mathbb{C}},W)]$$
	where $\mathcal{C}_{R}:p\longmapsto \mathcal{C}_{pR}$ (resp. $\mathcal{C}_{R\otimes\mathbb{Q}},\mathcal{C}_{\mathbb{C}}$) is a presheaf of $R$ (resp. $R\otimes\mathbb{Q},\mathbb{C}$) module and $f_{p,q}:\mathcal{C}_p \rightarrow \mathcal{C}_q$ is the morphism of mixed Hodge complex. Define $Tot(A^*(\mathfrak{L},\mathcal{C}))$ be a system
	$$[Tot(A^*(\mathfrak{L},\mathcal{C}_{R}));(Tot(A^*(\mathfrak{L},\mathcal{C}_{R\otimes\mathbb{Q}})),W),\tilde{\alpha};(Tot(A^*(\mathfrak{L},\mathcal{C}_{\mathbb{C}})),W,F),\tilde{\beta}]$$ where $$W_m Tot(A^*(\mathfrak{L},\mathcal{C}_{R\otimes\mathbb{Q}}))=\bigoplus_p A^*([0,p])_p \otimes W_{m-r(p)}\mathcal{C}_{pR\otimes\mathbb{Q}}$$
	$$F_m Tot(A^*(\mathfrak{L},\mathcal{C}_{\mathbb{C}}))=\bigoplus_p A^*([0,p])_p \otimes F_{m}\mathcal{C}_{p\mathbb{C}}$$
	and $\tilde{\alpha},\tilde{\beta}$ is the isomorphism induced by $\alpha,\beta$.
	
\end{defn}

\begin{prop}\label{ind hodge}
	If $\mathcal{C}$ is a presheaf of mixed Hodge complex, then $Tot(A^*(\mathfrak{L},\mathcal{C}))$ in above definition is a well defined mixed Hodge complex.
\end{prop}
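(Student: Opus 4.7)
The plan is to verify the four defining pieces of an MHC in turn: well-definedness of the filtrations $W$ and $F$ on $Tot(A^*(\mathfrak{L},\mathcal{C}))$, well-definedness of the quasi-isomorphisms $\tilde{\alpha}$ and $\tilde{\beta}$, finite-type cohomology, and the central assertion that each graded piece $Gr_n^W$ is a Hodge $R\otimes\mathbb{Q}$-complex of weight $n$. The guiding principle is a clean splitting of the two differentials with respect to $W$: the differential $\delta$ preserves $W_m$ while $\partial$ strictly decreases it by one, so that only $\delta$ survives on the graded pieces.

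First I would verify the filtration compatibilities. For $c \in W_{m-r(p)} \mathcal{C}_p$ and any $p_i$ covered by $p$, the structural map $f_{p,p_i}$ preserves $W$, so $f_{p,p_i}(c) \in W_{m-r(p)} \mathcal{C}_{p_i}$. Under the canonical identification $A^*([0,p])_{p_i} \cong A^*([0,p_i])_{p_i}$ of Proposition~\ref{OS-property}(3), the $p_i$-summand of $W_m Tot$ is $A^*([0,p_i])_{p_i} \otimes W_{m-r(p_i)}\mathcal{C}_{p_i}$, and since $r(p_i)=r(p)-1$ one has $W_{m-r(p)} = W_{(m-1)-r(p_i)}$, so $\partial(W_m Tot) \subset W_{m-1} Tot$. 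Meanwhile $\delta$ preserves $W_m$ and $F_m$ immediately from the corresponding properties on each $\mathcal{C}_p$, and $\partial$ preserves $F_m$ for the same reason (no shift is required on $F$). Consequently the induced differential on $Gr_m^W Tot$ is just $\pm\delta$.

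The central computation is that of the graded pieces. Since $A^*([0,p])_p$ is a free $R\otimes\mathbb{Q}$-module of finite rank, contributing total-degree shift $-r(p)$ with trivial internal differential, one obtains
\[
Gr_n^W Tot(A^*(\mathfrak{L},\mathcal{C})) \;=\; \bigoplus_{p \in \mathfrak{L}} A^*([0,p])_p \otimes Gr_{n-r(p)}^W \mathcal{C}_p
\]
with differential $\pm\delta$, and hence
\[
H^k\bigl(Gr_n^W Tot\bigr) \;=\; \bigoplus_{p \in \mathfrak{L}} A^*([0,p])_p \otimes H^{k+r(p)}\bigl(Gr_{n-r(p)}^W \mathcal{C}_p\bigr).
\]
By hypothesis each $Gr_{n-r(p)}^W \mathcal{C}_p$ is a Hodge complex of weight $n-r(p)$, so $H^{k+r(p)}(Gr_{n-r(p)}^W \mathcal{C}_p)$ carries a pure Hodge structure of weight $(n-r(p))+(k+r(p)) = n+k$, with Hodge filtration inherited from $F$ on $\mathcal{C}_{p\mathbb{C}}$. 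Tensoring with the free, trivially-weighted module $A^*([0,p])_p$ preserves purity and the Hodge filtration, so $H^k(Gr_n^W Tot)$ is pure of weight $n+k$; strict compatibility of the differential with $F$ on $Gr_n^W Tot_{\mathbb{C}}$ reduces to that on each $Gr_{n-r(p)}^W \mathcal{C}_{p\mathbb{C}}$. This identifies $Gr_n^W Tot$ as a Hodge complex of weight $n$.

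It remains to note that $\tilde{\alpha}$ and $\tilde{\beta}$ are constructed summand-wise as $\id_{A^*([0,p])_p}\otimes\alpha_p$ and $\id_{A^*([0,p])_p}\otimes\beta_p$; these commute with the natural inclusions defining $W$ and $F$, and pass to derived-category isomorphisms because tensoring with free modules is exact. Finiteness of $\mathfrak{L}$ combined with the finite-type hypothesis on each $\mathcal{C}_{pR}$ and the spectral sequence of Corollary~\ref{SS0} ensures finite-type cohomology of $Tot(A^*(\mathfrak{L},\mathcal{C}_R))$, while biregularity of $W$ follows from boundedness of the rank function on the finite poset. The main obstacle I anticipate is purely bookkeeping: keeping the negative grading convention consistent while verifying that the weight shift $m \mapsto m - r(p)$ in the definition of $W_m$ exactly compensates for the total-degree shift contributed by $A^*([0,p])_p$, so that the weights of $H^k(Gr_n^W)$ come out to $n+k$ without the need for any explicit Tate twist.
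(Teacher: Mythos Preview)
Your proposal is correct and follows essentially the same approach as the paper: compute $Gr_n^W Tot$ as the direct sum $\bigoplus_p A^*([0,p])_p \otimes Gr_{n-r(p)}^W\mathcal{C}_p$, observe that only $\delta$ survives on graded pieces, and check that the degree shift by $-r(p)$ exactly compensates the weight shift so that each summand is a Hodge complex of weight $n$. Your treatment is in fact more careful than the paper's short proof: you correctly note that $\partial(W_m)\subset W_{m-1}$ (strict decrease), which is the actual reason $\partial$ vanishes on $Gr^W$, whereas the paper's phrasing ``$\partial$ preserves $W$'' is by itself not enough to draw that conclusion; and you also spell out the auxiliary verifications (well-definedness of $\tilde\alpha,\tilde\beta$, finite type, biregularity) that the paper leaves implicit.
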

\begin{proof}
	A direct calculation gives that the $n$-th graded piece $Gr_m^WTot(A^*(\mathfrak{L},\mathcal{C}))$ equals $\bigoplus_p A^*(\mathfrak{L})_p \otimes Gr_{m-r(p)}^W\mathcal{C}_p$ with differential induced by $\delta$ on every $\mathcal{C}_p$ (the contribution of $\partial$ to the differential vanishes since $\partial$ preserve $W$), notice every element $\bullet\otimes\mathcal{C}_p^i$ have total degree $i-r(p)$, so $A^*(\mathfrak{L})_p \otimes Gr_{m-r(p)}^W\mathcal{C}_p$ is a weight $m$ Hodge complex for any $p \in \mathfrak{L}$.
\end{proof}

Calculate the spectral sequence associated with weight filtration of $Tot(A^*(\mathfrak{L},\mathcal{C}))$ carefully, we have $E_1^{-p,q}=\bigoplus_s A^*([0,s])_s \otimes E_1^{-p+r(s),q}(\mathcal{C}_s)$, and the complex of $E_1$ page $$\cdots\rightarrow  E_1^{-p,q}\rightarrow E_1^{-p+1,q}\rightarrow\cdots$$ equals $Tot(A^*(\mathfrak{L},E_1^{*,q}(\mathcal{C})))$, where $E_1^{*,q}(\mathcal{C})_s$ is the $E_1$ page of weight filtration of every $\mathcal{C}_s$.
Notice that $E_2^{-p,q}=Gr_q^{W}H^{q-p}(K_{R\otimes\mathbb{Q}})$ for every MHC $K$ \cite[Theorem 3.18]{Board2008}, we have
\begin{cor}
	$$Gr_q^{W}H^{q-p}(Tot(A^*(\mathfrak{L},\mathcal{C}_{R\otimes\mathbb{Q}})))=
	H^{-p}(Tot(A^*(\mathfrak{L},E_1^{*,q}(\mathcal{C}_{R\otimes\mathbb{Q}}))))$$
\end{cor}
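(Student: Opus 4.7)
The plan is to show that this corollary is an immediate consequence of the spectral sequence analysis recorded in the paragraph preceding it, combined with the standard degeneration theorem for weight spectral sequences of mixed Hodge complexes. First I would examine how the total differential $\partial + \delta$ interacts with the weight filtration $W$ on $Tot(A^*(\mathfrak{L},\mathcal{C}_{R\otimes\mathbb{Q}}))$. Since $\delta$ preserves $W$ while $\partial$ strictly lowers $W$ by one — a generator of $A^*([0,p])_p \otimes W_{m-r(p)}\mathcal{C}_{p,R\otimes\mathbb{Q}}$ maps under $\partial$ into components $A^*([0,p])_{p'} \otimes W_{m-r(p)}\mathcal{C}_{p',R\otimes\mathbb{Q}} = A^*([0,p])_{p'}\otimes W_{(m-1)-r(p')}\mathcal{C}_{p',R\otimes\mathbb{Q}}$ for $p$ covering $p'$ — the $E_0$-differential of the weight spectral sequence reduces to $\delta$ acting separately on each summand. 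Taking $\delta$-cohomology fibrewise over $s \in \mathfrak{L}$ produces the $E_1$ page
\[ E_1^{-p,q} = \bigoplus_{s} A^*([0,s])_s \otimes E_1^{-p+r(s),q}(\mathcal{C}_{s,R\otimes\mathbb{Q}}), \]
as stated just before the corollary.

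Next I would identify the $d_1$-differential, which is induced by $\partial$ together with the structure maps $f_{p,q}$. By the functoriality of the weight spectral sequence the assignment $s \longmapsto E_1^{*,q}(\mathcal{C}_{s,R\otimes\mathbb{Q}})$ is itself a presheaf on $\mathfrak{L}$ (with structure maps induced by $f_{p,q}$), so the $E_1$-row in total degree $q$ with its $d_1$-differential is precisely the complex $Tot(A^*(\mathfrak{L},E_1^{*,q}(\mathcal{C}_{R\otimes\mathbb{Q}})))$. Taking cohomology along this row yields
\[ E_2^{-p,q} = H^{-p}\bigl(Tot(A^*(\mathfrak{L},E_1^{*,q}(\mathcal{C}_{R\otimes\mathbb{Q}})))\bigr), \]
which is exactly the right-hand side of the corollary.

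Finally, by Proposition~\ref{ind hodge} the system $Tot(A^*(\mathfrak{L},\mathcal{C}))$ is a mixed Hodge complex, so the classical degeneration theorem \cite{Board2008} for the weight spectral sequence of any MHC gives $E_2 = E_\infty$, hence $E_2^{-p,q} = Gr_q^W H^{q-p}(Tot(A^*(\mathfrak{L},\mathcal{C}_{R\otimes\mathbb{Q}})))$. Combining with the formula for $E_2^{-p,q}$ above completes the proof. The main potential obstacle is purely bookkeeping: verifying carefully that the sign conventions in the indexing $(-p,q)$ used in the preceding spectral sequence match the conventions of the cited degeneration theorem, and double-checking that the shift by $r(p)$ in the definition of $W$ on $Tot(A^*(\mathfrak{L},\mathcal{C}))$ combined with the total-degree convention $\deg(x\otimes c) = i - r(p)$ indeed produces a weight $q$-piece landing in total degree $q-p$.
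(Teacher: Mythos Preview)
Your proposal is correct and follows essentially the same route as the paper: the paper records the $E_1$-page and $d_1$-differential of the weight spectral sequence of $Tot(A^*(\mathfrak{L},\mathcal{C}))$ in the paragraph immediately preceding the corollary, and then deduces the result in one line from the standard fact \cite[Theorem 3.18]{Board2008} that $E_2^{-p,q}=Gr_q^{W}H^{q-p}$ for any mixed Hodge complex. Your write-up simply makes explicit the $E_0$-step (that $\partial$ drops weight by one and hence vanishes on $Gr^W$, leaving $\delta$) which the paper absorbs into the phrase ``calculate the spectral sequence \ldots carefully.''
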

The total complex $Tot(A^*(\mathfrak{L},E_1^{*,q}(\mathcal{C}_{R\otimes\mathbb{Q}})))$ in the right side will be simpler if $\mathcal{C}$ satisfies some "purity" condition, for example:
\begin{cor}\label{pure result}
	If every $H^k(\mathcal{C}_s)$ is pure of weight $k$, then
	$$Gr_q^{W}H^{q-p}(Tot(A^*(\mathfrak{L},\mathcal{C}_{R\otimes\mathbb{Q}})))= H^{-p}(A^*(\mathfrak{L},H^q(\mathcal{C}_{R\otimes\mathbb{Q}})),\partial).$$
\end{cor}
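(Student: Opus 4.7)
The plan is to deduce this corollary directly from the preceding one together with the purity hypothesis. By that corollary it suffices to establish a natural isomorphism
\[
H^{-p}\bigl(Tot(A^*(\mathfrak{L}, E_1^{*,q}(\mathcal{C}_{R\otimes\mathbb{Q}})))\bigr)\;\cong\;H^{-p}\bigl(A^*(\mathfrak{L}, H^q(\mathcal{C}_{R\otimes\mathbb{Q}})),\partial\bigr),
\]
and for this I would first examine each fibre $E_1^{*,q}(\mathcal{C}_s)$ as a complex under the horizontal differential $d_1$ coming from the weight spectral sequence of the MHC $\mathcal{C}_s$.

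The key input of purity is exactly this: since $\mathcal{C}_s$ is a MHC its weight spectral sequence degenerates at $E_2$ with $E_2^{-p,q}(\mathcal{C}_s) = Gr_q^W H^{q-p}(\mathcal{C}_s)$, and the hypothesis that $H^k(\mathcal{C}_s)$ is pure of weight $k$ forces $Gr_q^W H^{q-p}(\mathcal{C}_s)$ to vanish unless $p=0$, in which case it equals $H^q(\mathcal{C}_s)$. Therefore the $q$-th row $(E_1^{*,q}(\mathcal{C}_s), d_1)$ is a resolution of $H^q(\mathcal{C}_s)$, with cohomology concentrated in column $p=0$. Because the structure maps $f_{p,q}$ are, by assumption, morphisms of MHC, they are strictly compatible with $W$, so this description is natural in $s\in\mathfrak{L}$: the presheaf of complexes $s\mapsto E_1^{*,q}(\mathcal{C}_s)$ admits a quasi-isomorphism onto the presheaf $s\mapsto H^q(\mathcal{C}_s)$ regarded as sitting in column $p=0$.

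Applying the functor $Tot\circ A^*(\mathfrak{L},-)$ to this quasi-isomorphism produces the asserted identification. Concretely, one runs the double-complex spectral sequence of Corollary~\ref{SS0} on $A^*(\mathfrak{L}, E_1^{*,q}(\mathcal{C}))$: taking horizontal $d_1$-cohomology first collapses every fibre onto $H^q(\mathcal{C}_s)$ in column $p=0$, so the spectral sequence degenerates at $E_2$, and the remaining $\partial$-cohomology is exactly $H^{-p}(A^*(\mathfrak{L}, H^q(\mathcal{C})),\partial)$. The main point requiring care is the naturality of the weight spectral sequence in $s$ together with the grading bookkeeping inherited from the definition of the weight filtration on $Tot(A^*(\mathfrak{L},\mathcal{C}))$; both follow from the fact that every $f_{p,q}$ is a morphism of MHC and hence preserves $W$ strictly, so no new ingredient beyond the standard properties of MHC morphisms is required.
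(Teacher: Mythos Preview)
Your proposal is correct and follows essentially the same route as the paper's proof: both invoke the preceding corollary to reduce to computing $H^{-p}(Tot(A^*(\mathfrak{L},E_1^{*,q}(\mathcal{C}))))$, use purity to see that $E_2^{p,q}(\mathcal{C}_s)$ is concentrated in the column $p=0$ where it equals $H^q(\mathcal{C}_s)$, and then apply the ``compute cohomology twice'' principle (your invocation of Corollary~\ref{SS0}) to collapse the double complex onto $H^{-p}(A^*(\mathfrak{L},H^q(\mathcal{C})),\partial)$. Your additional remarks on naturality and strict compatibility of the $f_{p,q}$ with $W$ are accurate but not needed beyond what the paper already assumes about $\mathcal{C}$ being a presheaf of MHC.
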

\begin{proof}
	In this case, $E_2^{p,q}(\mathcal{C}_{R\otimes\mathbb{Q}}) = 0$ for any $p\neq 0$ and $E_2^{0,q}(\mathcal{C}_{R\otimes\mathbb{Q}}) = H^q(\mathcal{C}_{R\otimes\mathbb{Q}})$. Then we can calculate $H^*(Tot(A^*(\mathfrak{L},E_1^{*,q}(\mathcal{C}_{R\otimes\mathbb{Q}}))))$ by "calculate cohomology twice", that is, $H^{-p}(Tot(A^*(\mathfrak{L},E_1^{*,q}(\mathcal{C}_{R\otimes\mathbb{Q}}))))
=H^{-p}(A^*(\mathfrak{L},H^q(\mathcal{C}_{R\otimes\mathbb{Q}})),\partial)$ as desired. 
\end{proof}

Now, suppose that $M$ and every $N_i\in \mathcal{A}$ are complex smooth algebraic varieties. We are going  to construct a presheaf of mixed Hodge complex as a substitute for $\mathcal{C}(\mathcal{A})$ in Theorem \ref{main}. Recall that $\mathcal{C}(\mathcal{A})_p= C^*(M,M-M_p)$. One possible choice is the substitute $\mathcal{K}(\mathcal{A})_p=Cone(K(M) \rightarrow K(M-M_p))[-1]$ for $\mathcal{C}(\mathcal{A})_p$, where $K(\bullet)$ is the associated MHC of any variety and $Cone$ denotes the mixed cone of a map. This simple construction does not behave very well since the quotient map $Tot(A^*(\mathfrak{L},\mathcal{C}(\mathcal{A}))) \rightarrow C^*(\mathcal{M}(\mathcal{A}))$ in Corollary \ref{main cor} is no longer a chain map of complexes if we replace $\mathcal{C}$ by $\mathcal{K}$. We will use the technique of mapping telescope (cone) to fix this problem.

\vskip .2cm

Recall that there is a mixed cone $Cone(\phi)$ for any morphism $\phi: K \rightarrow L$ of MHC $K,L$. Roughly speaking, $Cone(\phi)= K[1] \oplus L$ with $d(x,y)=(dx,f(x)+dy)$ and filtrations $W[1]\oplus W$ and $F\oplus F$. Actually $Cone(\phi)$ is a MHC with these filtrations,  we refer to \cite[Definition 3.31]{Zein2017} for strict definition about mixed cone.

\vskip .2cm
We define our {mixed telescope cone} by using mixed cone as follows.
\begin{defn}
	Let $K_1 \stackrel{f_1}{\rightarrow} K_2 \stackrel{f_2}{\rightarrow}\cdots \stackrel{f_{n-1}}{\rightarrow} K_n $ be a sequence of map of MHC and $n\geq 2$. Construct a function $\tilde{f}: \bigoplus_1^{n-1} K_i \rightarrow \bigoplus_2^{n} K_i$ as follows $$\tilde{f}(x_1,x_2,\ldots,x_{n-1})=(x_2+f_1(x_1),\ldots,x_{n-1}+f_{n-2}(x_{n-2}),f_{n-1}(x_{n-1})).$$
We define 	the \textbf{mixed telescope cone} of the sequence $K_1 \stackrel{f_1}{\rightarrow} K_2 \stackrel{f_2}{\rightarrow}\cdots \stackrel{f_{n-1}}{\rightarrow} K_n $ as the mixed cone $Cone(\tilde{f})$, also denoted by   $TCone(K_1 \stackrel{f_1}{\rightarrow} K_2 \stackrel{f_2}{\rightarrow}\cdots \stackrel{f_{n-1}}{\rightarrow} K_n)$. This  coincides with the usual definition of mixed cone if $n=2$.	
\end{defn}
\begin{rem}
	It is easy to check there is a short exact sequence of MHC
	{\small $$0 \rightarrow TCone(K_1 \stackrel{f_1}{\rightarrow} \cdots \stackrel{f_{s-1}}{\rightarrow} K_s) \rightarrow TCone(K_1 \stackrel{f_1}{\rightarrow} \cdots \stackrel{f_{n-1}}{\rightarrow} K_n) \rightarrow TCone(K_s \stackrel{f_s}{\rightarrow} \cdots \stackrel{f_{n-1}}{\rightarrow} K_n) \rightarrow 0.$$} 
If $K_1 \stackrel{f_1}{\rightarrow} \cdots \stackrel{f_{n-1}}{\rightarrow} K_n$ is the associated MHC of $U_n\subseteq \cdots \subseteq U_1$, then the long exact sequence induced by above short exact sequence is just the long exact sequence of triple $(U_1,U_s,U_n)$.
\end{rem}

\begin{defn}
	Define $$\mathcal{K}(\mathcal{A})_p= TCone(K(M)\rightarrow\cdots \rightarrow K(F_p^iM)\rightarrow\cdots\rightarrow K(M-M_p))[-1]$$ where $F_p^iM$ is the filtration in Definition \ref{geo filt}.
\end{defn}
\begin{prop}
	 The $\mathcal{K}(\mathcal{A})$ in above definition is a presheaf of mixed Hodge complex, and there is a quotient map   $$Tot(A^*(\mathfrak{L},\mathcal{K}(\mathcal{A}))) \rightarrow Cone(\mathcal{K}(F^{r(p)+1}_pM)\rightarrow \mathcal{K}(F^{r(p)}_pM))[-1]$$ 
which is a morphism of MHC and induces an isomorphism with mixed Hodge structure $$H^*(Tot(A^*([p,\infty),\mathcal{K}(\mathcal{A}))))\cong H^*(F^{r(p)+1}_pM,M-M_p).$$
	 In particular, for $p=\mathbf{0}$, $$H^*(Tot(A^*(\mathfrak{L},\mathcal{K}(\mathcal{A}))))\cong H^*(\mathcal{M}(\mathcal{A}))$$ with mixed Hodge structure.
\end{prop}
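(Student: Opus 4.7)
The plan is to mirror the proof of Theorem~\ref{main} and Corollary~\ref{main cor}, upgrading each construction to respect the mixed Hodge structure. First I would verify that $\mathcal{K}(\mathcal{A})$ is a presheaf of MHC. Every $F_p^i M$ is an open algebraic subvariety of $M$ (the complement of a union of closed algebraic subvarieties), hence has a canonical MHC $K(F_p^i M)$ in the sense of \cite{Zein2017}. The inclusions $F_p^i M \subset F_p^{i+1} M$ are open immersions of smooth algebraic varieties and so induce MHC morphisms $K(F_p^{i+1} M) \to K(F_p^i M)$; the iterated mixed cone (mixed telescope cone) of these morphisms is again an MHC, giving $\mathcal{K}(\mathcal{A})_p$ the structure of an MHC. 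For $p \geq q$, the containments $F_q^i M \subseteq F_p^i M$ give termwise MHC restrictions $K(F_p^i M) \to K(F_q^i M)$ compatible with the telescope maps, hence a morphism of MHC $f_{p,q}: \mathcal{K}(\mathcal{A})_p \to \mathcal{K}(\mathcal{A})_q$. Functoriality of these structure maps follows from the functoriality of $K(\cdot)$ and of the mixed cone construction.

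Next I would construct the quotient map and establish the cohomology isomorphism simultaneously. The telescope $\mathcal{K}(\mathcal{A})_p$ admits a natural projection onto its last two-term segment $Cone(K(F_p^{r(p)+1} M) \to K(F_p^{r(p)} M))[-1]$; this is a morphism of MHC by the short exact sequence of mixed cones noted in the remark after the definition of $TCone$. Combining this with the zero map on the summands $A^*([0,q])_q \otimes \mathcal{K}(\mathcal{A})_q$ for $q > p$ defines a morphism of MHC $Tot(A^*([p,\infty),\mathcal{K}(\mathcal{A}))) \to Cone(\mathcal{K}(F_p^{r(p)+1} M) \to \mathcal{K}(F_p^{r(p)} M))[-1]$ (compatibility with the $W$ and $F$ filtrations is automatic since the factor $A^*([0,p])_p$ sits in a single total degree with trivial MHC). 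To show that this is a quasi-isomorphism, I would adapt the spectral sequence argument of Theorem~\ref{main} using the telescope filtration on $\mathcal{K}(\mathcal{A})$, which is a presheaf filtration compatible with the weight structure. The associated $E_0$ quotients take the form $Tot(A^*([p,\infty), H^*(F_*^{i+1} M, F_*^i M)))$, which by the MHC enhancement of Theorem~\ref{presheaf of E1} decomposes as a direct sum of skyscraper presheaves of MHC; Lemma~\ref{sky} then forces the spectral sequence to vanish outside the column $i = r(p)$, yielding the stated isomorphism $H^*(Tot(A^*([p,\infty), \mathcal{K}(\mathcal{A})))) \cong H^*(F_p^{r(p)+1} M, M - M_p)$ as MHC. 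Taking $p = \mathbf{0}$ gives $H^*(\mathcal{M}(\mathcal{A}))$ with its canonical mixed Hodge structure, since $F_{\mathbf{0}}^1 M = \mathcal{M}(\mathcal{A})$ and $M - M_{\mathbf{0}} = \emptyset$.

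The hard part will be the MHC enhancement of Theorem~\ref{presheaf of E1}: one must promote the excision isomorphism $H^*(F_p^{i+1} M, F_p^i M) \cong H^*(N_{p,i}, N_{p,i,0})$ to an isomorphism of mixed Hodge structures, and verify that the identification of the $E_1$ page as a sum of skyscraper presheaves respects the MHC structure on the transition maps $f_{p,q,1}$. This reduces to the functoriality of $K(\cdot)$ for open immersions of smooth algebraic varieties together with the Hodge-theoretic treatment of relative cohomology via mixed cones, but tracking the $W$ and $F$ filtrations through the excision step requires some care, particularly because the tubular neighborhoods must be realized inside the category of smooth algebraic varieties. Once this is in place, Proposition~\ref{ind hodge} guarantees that every page of the spectral sequence carries compatible mixed Hodge structures, and the isomorphism at the level of cohomology groups automatically upgrades to an isomorphism of mixed Hodge structures.
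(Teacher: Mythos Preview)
Your overall strategy---mirror Theorem~\ref{main} with an MHC-compatible filtration on $\mathcal{K}(\mathcal{A})$---is exactly what the paper does, and your construction of the presheaf structure maps $f_{p,q}$ and of the quotient map matches the paper's. Where you diverge is in what you identify as ``the hard part'': you propose to promote the excision isomorphism of Theorem~\ref{presheaf of E1} to an isomorphism of mixed Hodge structures, and you correctly flag that realising tubular neighbourhoods algebraically is delicate. In fact this step is unnecessary, and the concern about algebraic tubular neighbourhoods is a red herring.

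The cleaner route, which the paper takes, is to separate the two obligations. First, the quotient map is a morphism of MHC purely by construction (as you already argued). Second, one only needs to know it is a quasi-isomorphism of the \emph{underlying complexes}---no Hodge filtrations need be tracked through the spectral sequence. For this second point, the paper defines the telescope filtration
\[
F^i\mathcal{K}(\mathcal{A})_p =
\begin{cases}
TCone\bigl(K(M)\to\cdots\to K(F_p^iM)\bigr)[-1] & i\geq r(p),\\
\mathcal{K}(\mathcal{A})_p & i<r(p),
\end{cases}
\]
and observes that $H^*(F^i\mathcal{K}(\mathcal{A})_p)\cong H^*(M,F_p^iM)\cong H^*(F^i\mathcal{C}(\mathcal{A})_p)$ as modules. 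Hence the entire spectral sequence argument of Theorem~\ref{main}---including the identification of the $E_1$-presheaf as a sum of skyscrapers via Theorem~\ref{presheaf of E1} and Lemma~\ref{sky}---goes through verbatim at the level of ordinary cohomology, with no need to upgrade excision or tubular neighbourhoods to the MHC setting. Once the quotient map is known to be both an MHC morphism and a quasi-isomorphism, the induced map on cohomology is automatically an isomorphism of mixed Hodge structures. A minor slip: your ``$E_0$ quotients'' should be at the chain level, not already $H^*(F_*^{i+1}M,F_*^iM)$; the cohomology appears only at $E_1$.
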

\begin{proof}
	Firstly, we need to define map $f_{p,q}: \mathcal{K}(\mathcal{A})_p \rightarrow \mathcal{K}(\mathcal{A})_q$. The $F_q^iM$ is always a subspace of $F_p^iM$ for any $p\geq q$ by definition. Then these maps $K(F_p^iM) \rightarrow K(F_q^iM)$ induce map of telescope cone $f_{p,q}:\mathcal{K}(\mathcal{A})_p \rightarrow \mathcal{K}(\mathcal{A})_q$. It is obvious that $\mathcal{K}(\mathcal{A})$ is a presheaf of mixed Hodge complex with these $f_{p,q}$.
	
	In the proof of Theorem \ref{main}, we use a filtration of $\mathcal{C}(\mathcal{A})$ such that $E_1$-term is sky-scraper presheaf. Similarly, let
	\begin{subnumcases}
	{F^i\mathcal{K}(\mathcal{A})_p=}
	TCone(K(M)\rightarrow\cdots\rightarrow K(F_p^*M)\rightarrow\cdots\rightarrow K(M-M_p))[-1], &if \ ~$i < r(p)$ \nonumber\\
	TCone(K(M)\rightarrow\cdots \rightarrow K(F_p^iM))[-1], &if \ ~$i \geq r(p).$ \nonumber
	\end{subnumcases}
	It is easy to check $F^i\mathcal{K}(\mathcal{A})$ is a filtration of presheaf ($f_{p,q}$ preserves this filtration). There is no difference between the homologies of $F^i\mathcal{K}(\mathcal{A})_p$ and $F^i\mathcal{C}(\mathcal{A})_p$, i.e.,  they are isomorphic to  $H^*(M,F^i_pM)$. So all processes in the proof of theorem \ref{main} can still be carried out very well for the filtration $F^i\mathcal{K}(\mathcal{A})_p$. Notice that 
$$F^{r(p)}\mathcal{K}(\mathcal{A})_q/F^{r(p)+1}\mathcal{K}(\mathcal{A})_q \cong 
\begin{cases} Cone(\mathcal{K}(F^{r(p)+1}_pM)\rightarrow \mathcal{K}(F^{r(p)}_pM))[-1] & \text{ if }p=q\\
0 &  \text{ if } q>p.
\end{cases}
$$ 
	Then we have that\\
	$$H^*(Tot(A^*([p,\infty),\mathcal{K}(\mathcal{A}))))\cong H^*(F^{r(p)+1}_pM,M-M_p)$$ with mixed Hodge structure.
\end{proof}

In particular, if $M$ is projective, we know that $H^k(\mathcal{A})_p=H^k(M,M-M_p)$ is pure of weight $k$ (actually, it equals a Tate twist of $H^*(M_p)$). Applying Corollary \ref{pure result}, we have
\begin{cor}\label{alg arg}
	Assume  $M$ is projective and using $\mathbb{Q}$ coefficients, we have
	 $$Gr_n^WH^{n-i}(\mathcal{M}(\mathcal{A})) =H^{-i}(A^*(\mathfrak{L},H^{n}(\mathcal{A})),\partial)$$
\end{cor}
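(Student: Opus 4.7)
The plan is to derive the corollary as a direct application of Corollary \ref{pure result} to the presheaf $\mathcal{K}(\mathcal{A})$ constructed above, once the requisite purity hypothesis is verified. So the corollary will follow in essentially three moves: identify the Hodge-theoretic model, check purity locally, and match the indexing.

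First, recall from the preceding proposition that $Tot(A^*(\mathfrak{L},\mathcal{K}(\mathcal{A})))$ is a mixed Hodge complex whose cohomology carries the canonical mixed Hodge structure on $H^*(\mathcal{M}(\mathcal{A}))$, and that for each $p \in \mathfrak{L}$ one has $H^k(\mathcal{K}(\mathcal{A})_p) \cong H^k(M,M-M_p) = H^k(\mathcal{A})_p$ as mixed Hodge structures. Thus to invoke Corollary \ref{pure result} it suffices to show that $H^k(M,M-M_p)$ is pure of weight $k$ for every $p \in \mathfrak{L}$.

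To establish this purity, I would argue as follows. Since $M$ is smooth projective and $M_p$ is a finite disjoint union of connected components of an intersection of the (closed algebraic smooth) subvarieties $N_{i_1}\cap\cdots\cap N_{i_k}$, the submanifold $M_p$ is itself a smooth closed subvariety of $M$, in particular smooth projective, with a well-defined complex codimension $c_p$. The Thom--Gysin isomorphism in the category of mixed Hodge structures then gives
\[
H^k(M,M-M_p) \;\cong\; H^{k-2c_p}(M_p)(-c_p).
\]
Because $M_p$ is smooth projective, $H^{k-2c_p}(M_p)$ is pure of weight $k-2c_p$, and the Tate twist $(-c_p)$ shifts the weight by $+2c_p$, so the left-hand side is pure of weight $k$, as required.

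With purity in hand, Corollary \ref{pure result} applied to $\mathcal{C}=\mathcal{K}(\mathcal{A})$ yields
\[
Gr_q^W H^{q-p}\bigl(Tot(A^*(\mathfrak{L},\mathcal{K}(\mathcal{A})_{\mathbb{Q}}))\bigr) \;\cong\; H^{-p}\bigl(A^*(\mathfrak{L},H^q(\mathcal{A})),\partial\bigr),
\]
and substituting the isomorphism $H^*(Tot(A^*(\mathfrak{L},\mathcal{K}(\mathcal{A})))) \cong H^*(\mathcal{M}(\mathcal{A}))$ together with the renaming $q=n$, $p=i$, gives exactly the statement of the corollary. The only step requiring genuine verification is the Thom--Gysin purity claim; I do not expect any obstacle there, but one must be careful that it is applied component-wise to $M_p$ (since the quasi-layer may have several components, all of the same codimension) and that the identification with $H^k(\mathcal{K}(\mathcal{A})_p)$ is compatible with both weight and Hodge filtrations --- this compatibility is built into the functorial MHC machinery already invoked in the construction of $\mathcal{K}(\mathcal{A})$.
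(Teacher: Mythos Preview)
Your proof is correct and follows essentially the same approach as the paper: verify that $H^k(M,M-M_p)$ is pure of weight $k$ via the Thom--Gysin isomorphism with Tate twist (the paper states this in one parenthetical line), then invoke Corollary~\ref{pure result} with $\mathcal{C}=\mathcal{K}(\mathcal{A})$. One minor remark: your parenthetical that the components of a quasi-layer all have the same codimension need not hold in full generality, but this does not affect the argument, since each component separately contributes a pure summand of weight $k$ regardless of its individual codimension.
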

At the end of this subsection, we shall compare the weight filtration with the "column-wise" filtration (\ref{col filt}) $\tau^{-*}$ of $Tot(A^*(\mathfrak{L},\mathcal{K}(\mathcal{A})))$.
\begin{thm}\label{compare thm}
	Assume that $M$ is projective and we use $\mathbb{Q}$ as coefficients. Then the "column-wise" filtration $\tau^{-*}$ and the weight filtration $W_*$ on $Tot(A^*(\mathfrak{L},\mathcal{K}(\mathcal{A})))$ induce the same filtration on cohomology group $H^*(Tot(A^*(\mathfrak{L},\mathcal{K}(\mathcal{A}))))\cong H^*(\mathcal{M}(\mathcal{A}))$.
\end{thm}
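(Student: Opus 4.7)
The plan is to identify the column-wise filtration $\tau^{\bullet}$ with the weight filtration $W_{\bullet}$ on $H^*(\mathcal{M}(\mathcal{A}))$ via the uniqueness characterization of $W$: a MHS admits at most one increasing filtration by sub-MHSs whose graded pieces are pure of the filtration-index weight. The matching on graded pieces $Gr^{\tau}_{-k}H^n=Gr^W_{n+k}H^n$ already follows from Corollary \ref{pure result}; what must be promoted is the whole filtration, not just the associated graded.

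First I would check the structural statement that each $\tau^{-k}Tot(A^*(\mathfrak{L},\mathcal{K}(\mathcal{A})))=\bigoplus_{r(q)\leq k}A^*([\mathbf{0},q])_q\otimes\mathcal{K}(\mathcal{A})_q$ is a sub-MHC. The subcomplex property is clear since $\partial$ strictly lowers $r(q)$ while $\delta$ preserves it, and the restrictions of $W$ and $F$ are just the same defining formulas for $Tot$ truncated to $r(q)\leq k$, so the inclusion is a morphism of MHC. Consequently $Gr^{\tau}_{-k}Tot$ inherits a MHC structure and the column-wise spectral sequence upgrades to a spectral sequence of mixed Hodge structures, and the induced filtration $\tau^{-\bullet}H^*(\mathcal{M}(\mathcal{A}))$ is by sub-MHSs.

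Next I would exploit purity to force $E_2$-degeneration. Projectivity of $M$ together with the Gysin identification $H^t(M,M-M_q)\cong H^{t-2c}(M_q)(-c)$ gives $H^t(\mathcal{A})_q$ pure of weight $t$, so both $E_1^{-k,t}(\tau)\cong\bigoplus_{r(q)=k}A^*([\mathbf{0},q])_q\otimes H^t(\mathcal{A})_q$ and $E_2^{-k,t}(\tau)\cong H^{-k}(A^*(\mathfrak{L},H^t(\mathcal{A})),\partial)$ are pure of weight $t$. Any $d_r$ with $r\geq 2$ would be a morphism of MHS between pure pieces of distinct weights and must therefore vanish, so $E_2=E_\infty$ and $Gr^{\tau}_{-k}H^n(\mathcal{M}(\mathcal{A}))$ is pure of weight $n+k$.

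To close I invoke the uniqueness lemma: if $F_\bullet$ is any increasing filtration of a MHS $V$ by sub-MHSs with each $Gr^F_m V$ pure of weight $m$, then $F_\bullet=W_\bullet$. This is a short induction: $F_m$ has weights $\leq m$ so $F_m\subseteq W_m$, while $V/F_m$ has only weights $>m$ so the image of $W_m V$ in $V/F_m$ lies in $W_m(V/F_m)=0$, giving $W_m\subseteq F_m$. Applied with $F_m H^n:=\tau^{-(m-n)}H^n$ this yields the theorem. The main technical obstacle is the first step: one must carefully verify that $\tau$, $W$, and $F$ are mutually compatible in the strong sense required so that Deligne's bifiltered machinery upgrades the $\tau$-spectral sequence to a spectral sequence of MHS with MHS-morphism differentials; once this is in place, the weight-parity collapse and the uniqueness argument essentially do the rest.
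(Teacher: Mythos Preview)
Your proof is correct and takes a genuinely different route from the paper's.

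Your strategy is: (i) upgrade the column-wise spectral sequence to a spectral sequence of mixed Hodge structures, (ii) use purity of $E_1$ to force $E_2$-degeneration by weight reasons, (iii) invoke a uniqueness characterization of the weight filtration (any filtration by sub-MHS whose graded pieces are pure of the correct weight must equal $W$). This is clean and conceptual, and the uniqueness lemma is a nice standalone statement.

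The paper sidesteps the bifiltered spectral-sequence machinery entirely. It observes that $\tau^{-i}Tot = Tot(A^*(\{p:r(p)\leq i\},\mathcal{C}))$ is itself a MHC on the truncated poset, so Corollary~\ref{pure result} applies to it directly and yields an explicit formula for $Gr^W_\bullet H^*(\tau^{-i}Tot)$ analogous to the one for the full complex. Comparing the two formulas shows (a) $Gr^W_{k+j}H^k(\tau^{-i}Tot)=0$ for $j>i$, so $W_iH^*(\tau^{-i}Tot)=H^*(\tau^{-i}Tot)$, and (b) the inclusion induces a surjection on $Gr^W_{k+j}H^k$ for all $j\leq i$. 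Strictness of morphisms of MHS then gives $\mathrm{Im}\big(H^*(\tau^{-i}Tot)\to H^*(Tot)\big)=W_iH^*(Tot)$ immediately.

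The trade-off: your approach isolates a reusable general principle, but leans on the claim that the $\tau$-spectral sequence is a spectral sequence of MHS---which you rightly flag as the main technical obstacle. The paper's argument avoids this entirely: it never needs any $E_r$-page for $r\geq 2$ to carry a Hodge structure, only that a single morphism of MHCs induces a morphism of MHS on cohomology, together with two applications of Corollary~\ref{pure result}. In that sense the paper's argument is more elementary, though yours is more structural and transferable.
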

\begin{proof}
	Assume $\mathcal{C}$ is a presheaf of MHC and every $H^k(\mathcal{C}_s)$ is pure of weight $k$. We prove a more general version that the "column-wise" filtration $\tau^{-*}$ and the weight filtration $W_*$ on $Tot(A^*(\mathfrak{L},\mathcal{C}))$ induce the same filtration on cohomology $H^*(Tot(A^*(\mathfrak{L},\mathcal{C})))$. 
	\vskip .2cm
	Recall that $\tau^{-i}Tot(A^*(\mathfrak{L},\mathcal{C}))= \bigoplus_{p\in\mathfrak{L},r(p)\leq i}A^*([0,p])_p \otimes \mathcal{C}_p= Tot(A^*(\{p\in\mathfrak{L} | r(p)\leq i\},\mathcal{C}))$. We need to show that $\IM(H^*(\tau^{-i}Tot(A^*(\mathfrak{L},\mathcal{C})))\rightarrow H^*(Tot(A^*(\mathfrak{L},\mathcal{C}))))$ equals $W_i H^*(Tot(A^*(\mathfrak{L},\mathcal{C})))$.
	\vskip .2cm
	Corollary \ref{pure result} shows that
	\begin{equation}\tag{1}
	Gr_{k+j}^WH^k(\tau^{-i}Tot(A^*(\mathfrak{L},\mathcal{C})))=H^{-j}(A^*(\{p\in\mathfrak{L} | r(p)\leq i\},H^{k+j}\mathcal{C}),\partial)
	\end{equation}
	\begin{equation}\tag{2} 
	Gr_{k+j}^WH^k(Tot(A^*(\mathfrak{L},\mathcal{C})))=H^{-j}(A^*(\mathfrak{L},H^{k+j}\mathcal{C}),\partial)
	\end{equation}
	 , so the inclusion map induces a surjective map on $Gr_{k+j}^WH^k=W_j/W_{j-1}$ for $j=i$ and an isomorphism for $j<i$. Then it  induces a surjective map on $W_i$ as well and therefore $\IM(W_iH^*(\tau^{-i}Tot(A^*(\mathfrak{L},\mathcal{C})))\rightarrow H^*(Tot(A^*(\mathfrak{L},\mathcal{C}))))$ equals $W_i H^*(Tot(A^*(\mathfrak{L},\mathcal{C})))$.
	\vskip .2cm
	Now, we only need to check $W_iH^*(\tau^{-i}Tot(A^*(\mathfrak{L},\mathcal{C})))$ equals $ H^*(\tau^{-i}Tot(A^*(\mathfrak{L},\mathcal{C})))$ itself. Observe that $Gr_{k+j}^WH^k(\tau^{-i}Tot(A^*(\mathfrak{L},\mathcal{C})))=0$ for $j>i$ by equation (1), which means that $W_j/W_{j-1}=0$ for $j>i$, so   $W_iH^*(\tau^{-i}Tot(A^*(\mathfrak{L},\mathcal{C})))$ equals $H^*(\tau^{-i}Tot(A^*(\mathfrak{L},\mathcal{C})))$ itself, which completes the proof.
\end{proof}

\begin{rem}
	Unlike the weight filtration $W_*$, the filtration $\tau^{-*}$ is also defined for our model $Tot(A^*(\mathfrak{L},\mathcal{C}(\mathcal{A})))$ with any coefficients where the manifolds in $\mathcal{A}$ need not to be varieties.
\end{rem}

\subsection{Inclusion map of sub-arrangements}\label{inc of sub}
In this section, we will consider the sub-arrangements $\mathcal{A}|p=\{M_{a_i}|r(a_i)=1,~a_i\leq p\}$. It is easy to see that every $\mathcal{A}|p$ is also a manifold arrangement with intersection lattice $[0,p]$. $\mathcal{M}(\mathcal{A})$ is obviously a subspace of $ \mathcal{M}(\mathcal{A}|p)$, so there is the inclusion  $i: \mathcal{M}(\mathcal{A})\hookrightarrow\mathcal{M}(\mathcal{A}|p)$. A natural question arises:
{\em what is the induced map  $i^*: H^*(\mathcal{M}(\mathcal{A}|p))\rightarrow H^*(\mathcal{M}(\mathcal{A}))$ under the isomorphism in Theorem \ref{main}?}

\vskip .2cm

For an OS-algebra $A^*(L)$, we know that there is an inclusion map $A^*([\mathbf{0},p])\rightarrow A^*(L)$ for every $p\in L$, which  maps $A^*([\mathbf{0},p])_q$ isomorphically onto $A^*(L)_q$ for every $q\leq p$. Then we have an inclusion map  $j: A^*([\mathbf{0},p],\mathcal{C}(\mathcal{A}))\rightarrow A^*(L,\mathcal{C}(\mathcal{A}))$ as double complexes.

\begin{prop}\label{sub arrangement}
	There is a commutative diagram
	\[
	\begin{CD}
	H^*(\mathcal{M}(\mathcal{A}|p)) @>i^*>> H^*(\mathcal{M}(\mathcal{A}))\\
	@AAA @AAA\\
	H^*(Tot(A^*([\mathbf{0},p],\mathcal{C}(\mathcal{A})))) @>j^*>> H^*(Tot(A^*(L,\mathcal{C}(\mathcal{A}))))
	\end{CD}
	\]
where $j^*$ is the induced map of cohomology by $j$, and   every column arrow is an isomorphism as in Corollary \ref{main cor}.
\end{prop}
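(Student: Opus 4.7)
The plan is to lift the diagram to the cochain level, using the explicit description of the vertical isomorphisms given in Corollary \ref{main cor}, and then check commutativity rank by rank.

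First I would note that the presheaf $\mathcal{C}(\mathcal{A})|_{[\mathbf{0},p]}$ agrees with $\mathcal{C}(\mathcal{A}|p)$: both assign to $q\leq p$ the cochain complex $C^*(M,M-M_q)$ with the same inclusion-induced structure maps (here the ambient manifold for $\mathcal{A}|p$ is still $M$, since $\mathcal{A}|p$ is a manifold arrangement in $M$). Consequently, the double complex $A^*([\mathbf{0},p],\mathcal{C}(\mathcal{A}))$ is identified with $A^*([\mathbf{0},p],\mathcal{C}(\mathcal{A}|p))$, and Corollary~\ref{main cor} applied to the sub-arrangement $\mathcal{A}|p$ supplies the left vertical isomorphism of the diagram.

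Next, I would reduce to a commutative square at the cochain level:
\[
\begin{CD}
Tot(A^*([\mathbf{0},p],\mathcal{C}(\mathcal{A}))) @>j>> Tot(A^*(L,\mathcal{C}(\mathcal{A})))\\
@VV\pi_{\mathcal{A}|p}V @VV\pi_{\mathcal{A}}V\\
C^*(\mathcal{M}(\mathcal{A}|p)) @>i^\sharp>> C^*(\mathcal{M}(\mathcal{A}))
\end{CD}
\]
where, per Corollary~\ref{main cor}, the vertical maps $\pi_{\mathcal{A}|p}$ and $\pi_{\mathcal{A}}$ are the quotient maps that kill all summands of positive rank and coincide with the restriction along the inclusions $\mathcal{M}(\mathcal{A}|p)\hookrightarrow M$ and $\mathcal{M}(\mathcal{A})\hookrightarrow M$ respectively on the rank-$\mathbf{0}$ piece $C^*(M)$, and $i^\sharp$ is the restriction induced by $i:\mathcal{M}(\mathcal{A})\hookrightarrow \mathcal{M}(\mathcal{A}|p)$.

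Commutativity of this square is immediate by a two-case check on homogeneous elements $x\otimes c\in A^*([\mathbf{0},p],\mathcal{C}(\mathcal{A}))_q$. If $q>\mathbf{0}$, then $j(x\otimes c)$ lies in $A^*(L,\mathcal{C}(\mathcal{A}))_q$ with $q>\mathbf{0}$, so both $\pi_{\mathcal{A}}\circ j$ and $i^\sharp\circ \pi_{\mathcal{A}|p}$ vanish. If $q=\mathbf{0}$, then $x\otimes c$ is simply an element of $C^*(M)$; going down and then right restricts it first to $\mathcal{M}(\mathcal{A}|p)$ and then to $\mathcal{M}(\mathcal{A})$, while going right and then down restricts it directly to $\mathcal{M}(\mathcal{A})$, and the two agree because the inclusions compose as $\mathcal{M}(\mathcal{A})\hookrightarrow\mathcal{M}(\mathcal{A}|p)\hookrightarrow M$. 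Passing to cohomology then yields the stated commutative diagram.

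There is no real obstacle here, only careful bookkeeping: one must verify that $j$ respects both differentials $\partial$ and $\delta$ (which is clear since $j$ is the inclusion of a subcomplex, and the OS-differential on $A^*([\mathbf{0},p])$ is compatible with the imbedding $i_s:A^*([\mathbf{0},p])\hookrightarrow A^*([\mathbf{0},s])$ by Proposition~\ref{OS-property}(3)--(4)), so that $j$ is an honest chain map of total complexes and therefore induces $j^*$ on cohomology.
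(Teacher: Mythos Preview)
Your proposal is correct and follows essentially the same approach as the paper: lift the square to the cochain level using the explicit quotient maps of Corollary~\ref{main cor}, then verify commutativity by noting that both compositions kill summands of positive rank and, on the rank-$\mathbf{0}$ piece $C^*(M)$, reduce to restriction along $\mathcal{M}(\mathcal{A})\hookrightarrow M$. Your write-up is slightly more careful than the paper's (you explicitly identify $\mathcal{C}(\mathcal{A})|_{[\mathbf{0},p]}$ with $\mathcal{C}(\mathcal{A}|p)$ and record why $j$ is a chain map), but the argument is the same.
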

\begin{proof}
	We see in the proof of Theorem \ref{main} that the isomorphism $H^*(Tot(A^*(L,\mathcal{C}(\mathcal{A}))))\rightarrow H^*(\mathcal{M}(\mathcal{A}))$ is induced by the quotient $A^*(L,\mathcal{C}(\mathcal{A}))\rightarrow C^*(\mathcal{M}(\mathcal{A}))$ by mapping $A^*(L)_q\otimes \mathcal{C}(\mathcal{A})_q$ to zero for all $q\neq \textbf{0}$ and mapping $\mathcal{C}(\mathcal{A})_0=C^*(M)$ to  $C^*(\mathcal{M}(\mathcal{A}))$. Now it suffices to check that the following diagram is commutative:
	\[
	\begin{CD}
	C^*(\mathcal{M}(\mathcal{A}|p)) @>>> C^*(\mathcal{M}(\mathcal{A}))\\
	@AAA @AAA\\
	A^*([\mathbf{0},p],\mathcal{C}(\mathcal{A})) @>>> A^*(L,\mathcal{C}(\mathcal{A})).
	\end{CD}
	\]
	Choose an element $\sum x_q\otimes c_q \in A^*([\mathbf{0},p],\mathcal{C}(\mathcal{A}))$ where $x_q\in A^*([\mathbf{0},p])_q$ and $c_q \in \mathcal{C}(\mathcal{A})_q$, the image of this element in $C^*(\mathcal{M}(\mathcal{A}))$ is the image of $x_{\textbf{0}}c_{\textbf{0}}$ under the quotient  $C^*(M)\rightarrow C^*(\mathcal{M}(\mathcal{A}))$ regardless of the 'path' we choose.
\end{proof}

\section{Product Structure}\label{prod str}

In this section, we are going to study the product structure on the double complex $A^*([p,\infty),\mathcal{C}(\mathcal{A}))$. We first discuss some general construction in subsection 5.1.  We will see that if $\mathcal{C}$ is a monoidal presheaf of DGA, then $Tot(A^*(\mathfrak{L},\mathcal{C}))$ is a differential algebra. Unfortunately,  $\mathcal{C}(\mathcal{A})_p=C^*(M,M-M_p)$ is not a monoidal presheaf under the cup product because the cup product $c_1\cup c_2$ for $c_1 \in C^*(M,M-M_p)$ and $c_2\in C^*(M,M-M_q)$ may not be contained in $\bigoplus_{s\in p\mathring{\vee}q}C^*(M,M-M_{s})$, where $C^*(M,M-M_p)$ and $C^*(M,M-M_q)$ are regarded as sub-complexes of $C^*(M)$.  We will modify $\mathcal{C}(\mathcal{A})$ into a monoidal presheaf $\hat{\mathcal{C}}(\mathcal{A})$ in subsection 5.2.

\subsection{Monoidal presheaf of DGA}\label{our double complex}\

\begin{defn}
	Let $\mathcal{C}$ be a presheaf of cochain complex. We call $\mathcal{C}$  a \textbf{monoidal presheaf of DGA} if $\mathcal{C}$ is  a monoidal presheaf with the monoidal product satisfying
	$$\delta(c_1c_2)=\delta(c_1)c_2+(-1)^{i}c_1\delta(c_2)$$
	for all $c_1 \in \mathcal{C}_p^i, c_2 \in \mathcal{C}_q^j$, where $\delta$ is the differential of every cochain complex $\mathcal{C}_p$.
\end{defn}

\begin{rem}
	For the monoidal product on $\mathcal{C}$, we have $\mathcal{C}_p \cdot \mathcal{C}_p \subset \mathcal{C}_p$ since $p\mathring{\vee} p=\{p\}$, so every $\mathcal{C}_p$ be a differential graded algebra. This is the reason why we use the name 'monoidal presheaf of DGA'.
\end{rem}

	Let $\mathcal{C}$ be a monoidal presheaf of DGA on locally geometric poset $\mathfrak{L}$.  Then the associated double complex $(A^*(\mathfrak{L},\mathcal{C}), \partial, \delta)$ has a natural product structure induced by the OS-algebra and the monoidal product of $\mathcal{C}$, as we defined in Definition \ref{OS-alg with coef}, that is	
\begin{equation} \label{product}
(x\otimes c_1)\cdot (y\otimes c_2)=(-1)^{\deg(c_1)r(q)}\sum_{s\in p\mathring{\vee} q}(i_{s}x\cdot i_{s}y)\otimes j_s(c_1\cdot c_2)
\end{equation} for $x\in A^*([\mathbf{0},p])_p,y\in A^*([\mathbf{0},q])_q, c_1 \in \mathcal{C}_p, c_2 \in \mathcal{C}_q$, where product "$\cdot$" on the right side is the product of OS-algebra $A^*([\mathbf{0},s])$, $j_s$ is projection on $\mathcal{C}_s$ and $i_s$ is the imbedding map in Proposition \ref{OS-property}(3).
Furthermore, $\partial$ and $\delta$ are both derivation of algebra $A^*(\mathfrak{L},\mathcal{C})$, we need a lemma before prove this property.

\begin{lem}\label{a simple lem}
	The following equations hold in $A^*(\mathfrak{L},\mathcal{C})$
	$$\sum_{t\in q\mathring{\vee} s}x_t\otimes j_t(b\cdot f_{p,q}a)= \sum_{k\in p\mathring{\vee} s}x_{\lambda k}\otimes f_{k,\lambda k}(j_{k}(b\cdot a))$$
and
	$$\sum_{t\in q\mathring{\vee} s}x_t\otimes j_t((f_{p,q}a) \cdot b)= \sum_{k\in p\mathring{\vee} s}x_{\lambda k}\otimes f_{k,\lambda k}(j_{k}(a\cdot b))$$
	where $p,q,s\in \mathfrak{L}, q\leq p$, $x_t\in A^*([\mathbf{0},t])_t$ for all $t\in q\mathring{\vee} s$, $a\in \mathcal{C}_p, b\in \mathcal{C}_s$, $\lambda=\lambda_{psqs}$ be the canonical map $p\mathring{\vee} s \rightarrow q\mathring{\vee} s$
\end{lem}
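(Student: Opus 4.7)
The plan is to unwind both sides using the defining identity of a monoidal presheaf and then reindex the resulting sum via the canonical map $\lambda = \lambda_{psqs}\colon p\mathring{\vee} s \to q\mathring{\vee} s$ supplied by Proposition 2.2(3). First I would apply the monoidal compatibility from Definition 2.12 directly to the product appearing on the left-hand side, which gives
$$b \cdot f_{p,q}(a) \;=\; f_{p\mathring{\vee} s,\, q\mathring{\vee} s}(b \cdot a) \;=\; \bigoplus_{k\in p\mathring{\vee} s} f_{k,\lambda k}\!\bigl(j_k(b\cdot a)\bigr),$$
where the last equality uses the notational convention $f_{I,J} = \bigoplus_p f_{p,\lambda p}$ introduced just before Definition 2.12.

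Next I would project onto the $\mathcal{C}_t$-summand for each $t\in q\mathring{\vee} s$. Because $f_{k,\lambda k}$ lands in $\mathcal{C}_{\lambda k}$, the projection $j_t$ selects exactly those indices $k$ with $\lambda k = t$, giving
$$j_t\!\bigl(b\cdot f_{p,q}(a)\bigr) \;=\; \sum_{k\in\lambda^{-1}(t)} f_{k,\lambda k}\!\bigl(j_k(b\cdot a)\bigr).$$
Here the crucial input is Proposition 2.2(3): the map $\lambda$ is well-defined, so the fibers $\lambda^{-1}(t)$, as $t$ ranges over $q\mathring{\vee} s$, partition $p\mathring{\vee} s$. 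Tensoring by $x_t$ on the left and summing over $t$ therefore collapses the double sum into a single sum over $k\in p\mathring{\vee} s$, with $t$ replaced by $\lambda k$ and $x_t$ replaced by $x_{\lambda k}$, which is precisely the right-hand side of the claimed identity.

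The second equation is formally identical: one simply invokes the other monoidal compatibility $f_{p,q}(a)\cdot b = f_{p\mathring{\vee} s,\, q\mathring{\vee} s}(a\cdot b)$ in place of the first, and the same projection-and-reindexing argument goes through verbatim. There is no real obstacle here; the only thing to be careful about is the bookkeeping of the canonical map $\lambda$ and the verification that the fibers of $\lambda$ partition $p\mathring{\vee} s$, which is exactly the content of the existence-and-uniqueness statement in Proposition 2.2(3). Once this partitioning is in place, both identities are mere rearrangements of a single sum.
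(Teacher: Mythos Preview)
Your proposal is correct and follows essentially the same approach as the paper: apply the monoidal compatibility $b\cdot f_{p,q}(a)=f_{p\mathring{\vee} s,\,q\mathring{\vee} s}(b\cdot a)$, expand via the notational convention for $f_{I,J}$, project onto each $\mathcal{C}_t$, and then reindex the resulting double sum over the fibers $\lambda^{-1}(t)$ into a single sum over $k\in p\mathring{\vee} s$. The paper's proof is line-for-line the same computation.
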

\begin{proof} By a direct calculation, we have that 
	\begin{align*}
	b\cdot f_{p,q}(a)&=f_{p\mathring{\vee} s, q\mathring{\vee} s}(b\cdot a)
	=\sum_{k\in p\mathring{\vee} s}f_{k,\lambda k}j_k(b\cdot a)\\
	&=\sum_{t\in q\mathring{\vee} s}\sum_{k\in \lambda^{-1}t}f_{k,t}j_k(b\cdot a).
	\end{align*}
Doing the projection on two sides to $\mathcal{C}_t$ term, we have $j_t(b\cdot f_{p,q}a)=\sum_{k\in \lambda^{-1}t}f_{k,t}j_k(b\cdot a)$. Then
    \begin{align*}
    \sum_{t\in q\mathring{\vee} s}x_t\otimes j_t(b\cdot f_{p,q}a)&=
    \sum_{t\in q\mathring{\vee} s}\sum_{k\in \lambda^{-1}t}x_t\otimes f_{k,t}j_k(b\cdot a)\\
    &=\sum_{k\in p\mathring{\vee} s} x_{\lambda k} \otimes f_{k,\lambda k}j_k(b\cdot a)
    \end{align*}
    as desired. The second equation follows in a similar way as above.
\end{proof}

\begin{prop}\label{Leibniz}
    Two differentials $\partial$ and $\delta$ with respect to the product (\ref{product}) of  $A^*(L,\mathcal{C})$  satisfy the  Leibniz laws
    $$\partial(\alpha\beta) = \partial(\alpha)\beta + (-1)^{i-r(p)}\alpha\partial(\beta)$$
    $$\delta(\alpha\beta)= \delta(\alpha)\beta + (-1)^{i-r(p)}\alpha\delta(\beta)$$
    for all $\alpha\in A^*([\mathbf{0},p])_p\otimes \mathcal{C}_p^i$ and $\beta\in A^*([\mathbf{0},q])_q\otimes \mathcal{C}_q^j$.
\end{prop}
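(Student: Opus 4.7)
My plan is to verify both Leibniz identities on pure tensors $\alpha = x \otimes c_1$ and $\beta = y \otimes c_2$ with $x \in A^*([\mathbf{0},p])_p$, $c_1 \in \mathcal{C}_p^i$, $y \in A^*([\mathbf{0},q])_q$, $c_2 \in \mathcal{C}_q^j$, and then extend by bilinearity. Throughout, by Proposition~\ref{OS-property}(2), only those $s \in p \mathring{\vee} q$ with $r(s) = r(p) + r(q)$ contribute to the product, so $r(s) \equiv r(p)+r(q) \pmod{2}$ can be used freely in any sign calculation.

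For $\delta$, the verification is essentially mechanical. Since $\delta$ acts only on the $\mathcal{C}$-factor and each projection $j_s$ commutes with $\delta$ (being the projection of the direct-sum chain complex $\bigoplus_p \mathcal{C}_p$ onto a summand), expanding both sides using the DGA hypothesis $\delta(c_1 c_2) = \delta(c_1) c_2 + (-1)^i c_1 \delta(c_2)$ reduces the claim to a sign identity. Concretely, the coefficient of the summand $(i_s x \cdot i_s y) \otimes j_s(\delta(c_1) \cdot c_2)$ on the left is $(-1)^{ir(q)+r(s)}$, while on the right it is $(-1)^{r(p)+(i+1)r(q)}$; these agree once $r(s) = r(p)+r(q)$ is substituted. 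The coefficient of $(i_s x \cdot i_s y) \otimes j_s(c_1 \cdot \delta(c_2))$ matches analogously.

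For $\partial$, I would apply the OS-Leibniz rule (Proposition~\ref{OS-property}(4)) inside each $A^*([\mathbf{0},s])$:
\[
\partial(i_s x \cdot i_s y) = \sum_k (i_s x_k)(i_s y) + (-1)^{r(p)}\sum_j (i_s x)(i_s y_j),
\]
where $\partial x = \sum_k x_k$ and $\partial y = \sum_j y_j$. This splits $\partial(\alpha\beta)$ into an ``$x$-part'' and a ``$y$-part'', which I aim to match, respectively, with $\partial(\alpha)\beta = (-1)^{ir(q)}\sum_k \sum_{t \in p_k \mathring{\vee} q}(i_t x_k \cdot i_t y) \otimes j_t(f_{p,p_k}(c_1)\cdot c_2)$ and with $(-1)^{i-r(p)}\alpha\, \partial(\beta)$.

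The main technical obstacle is reconciling the indexing: the $x$-part of $\partial(\alpha\beta)$ is indexed by $s \in p \mathring{\vee} q$, whereas $\partial(\alpha)\beta$ is indexed by $t \in p_k \mathring{\vee} q$. The bridge is the canonical map $\lambda : p \mathring{\vee} q \to p_k \mathring{\vee} q$ from Proposition~\ref{canonical map}(3), together with the second identity of Lemma~\ref{a simple lem}, which translates $\sum_{t \in p_k \mathring{\vee} q}(i_t x_k \cdot i_t y) \otimes j_t(f_{p,p_k}(c_1)\cdot c_2)$ into $\sum_{s}(i_s x_k \cdot i_s y) \otimes f_{s,\lambda(s)}(j_s(c_1 c_2))$, exactly matching the $x$-part of $\partial(\alpha\beta)$ term by term. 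The $y$-part is handled symmetrically (using the first identity of Lemma~\ref{a simple lem}), and the extra $(-1)^{r(p)}$ from the OS-Leibniz combines with the product sign $(-1)^{ir(q_j)} = (-1)^{ir(q)-i}$ (since $q$ covers $q_j$) to yield precisely the required $(-1)^{i-r(p)}$ prefactor on $\alpha\, \partial(\beta)$. Finally, any $t \in p_k \mathring{\vee} q$ with $r(t) < r(p_k) + r(q)$ (i.e.~$t$ outside the image of $\lambda$, or $s$ for which $\lambda(s)$ has the ``wrong'' rank) contributes zero on both sides by Proposition~\ref{OS-property}(2), so such degenerate terms do not disturb the bijective matching.
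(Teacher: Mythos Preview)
Your proposal is correct and follows essentially the same route as the paper's proof: for $\delta$ you do a direct sign check using the DGA hypothesis on $\mathcal{C}$, and for $\partial$ you split via the OS-algebra Leibniz rule and invoke Lemma~\ref{a simple lem} (both identities, one for each ``half'') to reconcile the sums over $p\mathring\vee q$ with those over $p_k\mathring\vee q$ and $p\mathring\vee q_j$, exactly as the paper does. One minor remark: your closing sentence conflates ``$r(t)<r(p_k)+r(q)$'' with ``$t$ outside the image of $\lambda$'', which are not the same condition, and the vanishing for $t\notin\operatorname{im}\lambda$ is not a consequence of Proposition~\ref{OS-property}(2) but rather is built into Lemma~\ref{a simple lem} itself (its proof shows $j_t(f_{p,p_k}(c_1)\cdot c_2)=0$ for such $t$); since you are applying the lemma as a black-box identity over the full index sets, this case is already absorbed and the remark is unnecessary.
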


\begin{proof}
	Write $\alpha=x_1\otimes c_1$ and $\beta=x_2\otimes c_2$ where $x_1\in A^*([\mathbf{0},p])_p, x_2\in A^*([\mathbf{0},q])_q, c_1 \in \mathcal{C}_p^i$, and $c_2 \in \mathcal{C}_q^j$. Let $\partial(x_1) = \sum_u x_{p_u}$ and $\partial(x_2) = \sum_v x_{q_v}$ such that $p$ covers $p_u$ and $q$ covers $q_v$. Since  the differential  of the OS-algebra $A^*(L)$ satisfies the Leibniz law,
we have that $\partial(x_1x_2)=\partial(x_1)x_2+(-1)^{r(p)}x_1\partial(x_2)=\sum x_{p_u}x_2 +(-1)^{r(p)}\sum x_1x_{q_v}$. So	
	\begin{align*}
	(-1)^{r(q)i}\partial(\alpha\beta)&=\sum_{k\in p\mathring{\vee} q}\partial ((i_kx_1 i_kx_2)\otimes j_k(c_1c_2))\\
	&=\sum_{k\in p\mathring{\vee} q}(\sum_u i_kx_{p_u}i_kx_2\otimes f_{k,\lambda k}j_s(c_1c_2)+(-1)^{r(p)}\sum_v i_kx_1i_kx_{q_v} \otimes f_{k,\lambda k}j_s(c_1c_2))
	\end{align*}
	where the first $\lambda$ is $\lambda_{pqp_uq}$, the second is $\lambda_{pqpq_v}$. Using Lemma \ref{a simple lem}, the right side equals
	\begin{align*}
	&\sum_u \sum_{t\in p_u\mathring{\vee} q}i_tx_{p_u}i_tx_2\otimes j_t(f_{p, p_u}(c_1)c_2)+(-1)^{r(p)}\sum_v \sum_{t\in p\mathring{\vee} q_v}i_tx_1i_tx_{q_v} \otimes j_t(c_1f_{q,q_v}(c_2))\\
	&= (-1)^{r(q)i}(\sum_u x_{p_u} \otimes f_{p, p_u}(c_1))(x_2\otimes c_2)
	   + (-1)^{r(p)+i(r(q)-1)}(x_1\otimes c_1)(\sum_v x_{q_v} \otimes f_{q, q_v}(c_2))\\
   &= (-1)^{r(q)i}\partial (\alpha)\beta+ (-1)^{r(p)+r(q)i-i}\alpha\partial(\beta)
	\end{align*}
	from which  our first equation follows, and $(-1)^{r(q)i}\delta(\alpha\beta)$ equals
	\begin{align*}
	&\sum_{k\in p\mathring{\vee} q}\delta ((i_kx_1 i_kx_2)\otimes j_k(c_1c_2))\\
	&= (-1)^{r(p)+r(q)}\sum_{k\in p\mathring{\vee} q}(i_kx_1i_kx_2)\otimes \delta j_k(c_1c_2)\\
	&=(-1)^{r(p)+r(q)}\sum_{k\in p\mathring{\vee} q}(i_kx_1i_kx_2)\otimes j_k(\delta(c_1)c_2) + (-1)^{r(p)+r(q)+i}\sum_{k\in p\mathring{\vee} q}(i_kx_1i_kx_2)\otimes j_k(c_1\delta(c_2))\\
	&=(-1)^{r(p)+r(q)i}[(x_1 \otimes \delta(c_1))(x_2 \otimes c_2)
	  + (-1)^{r(q)+i}(x_1 \otimes c_1)(x_2 \otimes \delta(c_2))]\\
	&=(-1)^{r(q)i}\delta(x_1 \otimes c_1)(x_2 \otimes c_2)
	   + (-1)^{r(p)+i+r(q)i}(x_1 \otimes c_1)\delta(x_2 \otimes c_2)\\
	&=(-1)^{r(q)i}\delta(\alpha)\beta
	   + (-1)^{r(p)+i+r(q)i}\alpha\delta(\beta)
	\end{align*}
	which induces  our second required equation.
\end{proof}

\begin{cor}
 Let	$\mathcal{C}$ be a monoidal presheaf of DGA on a locally geometric poset $\mathfrak{L}$, then the total complex $Tot(A^*(\mathfrak{L},\mathcal{C}))$ is a differential algebra with the total differential $\delta+\partial$.
\end{cor}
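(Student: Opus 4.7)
The plan is to reduce the corollary to three checks: (i) $(\partial+\delta)^2 = 0$, (ii) the Leibniz rule for $\partial+\delta$ with respect to the product (\ref{product}), and (iii) associativity of that product. Items (i) and (ii) are essentially free: $\partial^2=0$ holds in the OS-algebra by Proposition \ref{OS-property}(4), $\delta^2=0$ holds in each cochain complex $\mathcal{C}_p$, and the anticommutation $\partial\delta+\delta\partial=0$ was verified when the double complex was introduced. Summing the two Leibniz identities in Proposition \ref{Leibniz} yields the Leibniz rule for $\partial+\delta$, with sign governed by the total degree $i-r(p)$. So the substantive work is to prove associativity, which is where I would focus.

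For associativity I would take three elementary tensors $\alpha_k = x_k\otimes c_k$ with $x_k\in A^*([\mathbf{0},p_k])_{p_k}$ and $c_k\in\mathcal{C}_{p_k}$, and expand both bracketings. The outer sum in $(\alpha_1\alpha_2)\alpha_3$ ranges over pairs $(s,t)$ with $s\in p_1\mathring{\vee} p_2$ and $t\in s\mathring{\vee} p_3$, while in $\alpha_1(\alpha_2\alpha_3)$ it ranges over pairs $(u,v)$ with $u\in p_2\mathring{\vee} p_3$ and $v\in p_1\mathring{\vee} u$. The first key step is to observe that both index sets are in canonical bijection with the set $T$ of minimal upper bounds of $\{p_1,p_2,p_3\}$ in $\mathfrak{L}$: given $t\in T$, the interval $[\mathbf{0},t]$ is a geometric lattice, so $s:=p_1\vee p_2$ and $u:=p_2\vee p_3$ are well defined there, and a minimality argument modeled on Proposition \ref{canonical map}(3) shows that $s\in p_1\mathring{\vee} p_2$, $u\in p_2\mathring{\vee} p_3$, $t\in s\mathring{\vee} p_3$, and $t\in p_1\mathring{\vee} u$; conversely any such pair arises from a unique $t\in T$. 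Once reindexed by $t\in T$, both products land in the same summand $A^*([\mathbf{0},t])_t\otimes \mathcal{C}_t$, and I would invoke associativity of $A^*([\mathbf{0},t])$ together with associativity of the monoidal product in $\mathcal{C}$, routing the projections $j_s,j_u$ and the structure maps $f_{k,\lambda k}$ through the Lemma \ref{a simple lem} identities already deployed in the proof of Proposition \ref{Leibniz}.

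The hard part will be sign bookkeeping. The product (\ref{product}) carries a sign $(-1)^{\deg(c_1)r(q)}$ depending on the right factor's rank, so the two bracketings yield a priori different nested signs. I would first reduce to the case where only the top-rank summands $t$ contribute, using Proposition \ref{OS-property}(2): the OS-product $i_t x_1\cdot i_t x_2\cdot i_t x_3$ vanishes unless $r(t)=r(p_1)+r(p_2)+r(p_3)$, and on this locus $r(s)=r(p_1)+r(p_2)$ and $r(u)=r(p_2)+r(p_3)$ are forced. A direct count then shows that both bracketings acquire the same total sign $(-1)^{\deg(c_1)r(p_2)+\deg(c_1)r(p_3)+\deg(c_2)r(p_3)}$, and associativity of the underlying algebras does the rest. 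Combined with (i) and (ii), this establishes that $Tot(A^*(\mathfrak{L},\mathcal{C}))$ is a differential graded algebra, as claimed.
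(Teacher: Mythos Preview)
Your proof is correct and, for the differential part, matches the paper exactly: the paper's proof of this corollary consists of the single observation that summing the two Leibniz identities in Proposition~\ref{Leibniz} yields the Leibniz rule for $\partial+\delta$, with $(\partial+\delta)^2=0$ already established when the double complex was introduced.

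Where you diverge is in treating associativity of the product~(\ref{product}) as a substantive claim to be verified. The paper never proves this explicitly; it is tacitly absorbed into the phrase ``differential algebra.'' Your argument is nonetheless correct and worth having: the reindexing of both iterated sums by the set $T$ of minimal upper bounds of $\{p_1,p_2,p_3\}$ in $\mathfrak{L}$ is exactly the right move, and the bijection you describe follows from repeated use of Proposition~\ref{canonical map}(3). Your reduction via Proposition~\ref{OS-property}(2) to the ``independent'' locus $r(t)=r(p_1)+r(p_2)+r(p_3)$ is also correct and makes the sign check clean, since it forces $r(s)=r(p_1)+r(p_2)$ and $r(u)=r(p_2)+r(p_3)$ in the intermediate steps. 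So your proposal is a strict elaboration of the paper's proof, filling in a point the authors left to the reader.
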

\begin{proof}
	It suffices to show that $\delta+\partial$ satisfies the Leibniz law, that is,
$$(\delta+\partial)(\alpha\beta) = (\delta+\partial)(\alpha)\beta + (-1)^{i-r(p)}\alpha(\delta+\partial)(\beta).$$
This immediately  follows from the Leibniz laws of $\delta$ and $\partial$ in Proposition~\ref{Leibniz}.
\end{proof}

Recall that the "column-wise" filtration of the double complex $A^*(\mathfrak{L},\mathcal{C})$ is $$\tau^{-k}Tot(A^*(\mathfrak{L},\mathcal{C}))=\bigoplus_{q\text{ with }r(q)\leq k} A^*(\mathfrak{L},\mathcal{C})_q.$$
If $\mathcal{C}$ is a monoidal presheaf, this is a decreasing filtration of algebra by above discussing ,  satisfying the condition  of \cite[Theorem  2.14]{McCleary2000}. Thus we have
\begin{cor}\label{SS1}
	There is a spectral sequence associated with filtration $\tau^{-*}$ satisfying
	$$E_2^{-i,j}=H^{-i}(A^*([p,\infty),H^{j}(\mathcal{C})),\partial)$$
which	converges to $H^*(Tot(A^*([p,\infty),\mathcal{C})))$ as algebras, i.e. $E_\infty^{-*,*} = Gr_*^\tau H^*(Tot(A^*([p,\infty),\mathcal{C})))$ as algebras.
\end{cor}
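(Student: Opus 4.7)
The plan is to apply the standard machinery for spectral sequences of filtered differential graded algebras, e.g.\ \cite[Theorem 2.14]{McCleary2000}, to the column-wise filtration $\tau^{-*}$ on $Tot(A^*([p,\infty),\mathcal{C}))$. The module-level convergence and the $E_2$-identification are essentially the content of Corollary \ref{SS0} (applied to the locally geometric sub-poset $[p,\infty)$); the new content here is to promote everything to the category of algebras, so the real work reduces to checking that $\tau^{-*}$ is a multiplicative filtration and that each page $E_r$ inherits a product for which $d_r$ is a derivation.

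For multiplicativity of $\tau^{-*}$, I would take $\alpha \in A^*([p,\infty),\mathcal{C})_{q_1}$ with $r(q_1) \leq i$ and $\beta \in A^*([p,\infty),\mathcal{C})_{q_2}$ with $r(q_2) \leq j$. The product formula (\ref{product}) places $\alpha \cdot \beta$ into $\bigoplus_{s \in q_1 \mathring{\vee} q_2} A^*([p,\infty),\mathcal{C})_s$, and for each such $s$ the submodular inequality inside the geometric lattice $[\mathbf{0},s]$ gives $r(s) = r(q_1 \vee q_2) \leq r(q_1) + r(q_2) \leq i + j$. Hence $\tau^{-i} \cdot \tau^{-j} \subseteq \tau^{-(i+j)}$. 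Since $\delta$ preserves $r(q)$ and $\partial$ strictly lowers $r(q)$ by one, both differentials respect $\tau^{-*}$, and by the corollary immediately preceding the statement the total differential $\partial+\delta$ is a derivation of $Tot(A^*([p,\infty),\mathcal{C}))$.

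To identify the $E_2$-page, I would observe that on the associated graded $\tau^{-k}/\tau^{-(k-1)}$ only $\delta$ survives, since the $\partial$-contribution lands one filtration step lower and is quotiented out. Thus $E_0^{-i,*}$ is a direct sum over $q$ with $r(q)=i$ of the complexes $A^*([\mathbf{0},q])_q \otimes (\mathcal{C}_q,\delta)$, so taking $\delta$-cohomology yields $E_1^{-i,j} \cong A^*([p,\infty),H^{j}(\mathcal{C}))_{-i}$. The induced $d_1$ then agrees with the $\partial$-differential of Definition \ref{OS complex} applied to the presheaf $H^j(\mathcal{C})$, giving $E_2^{-i,j} = H^{-i}(A^*([p,\infty),H^j(\mathcal{C})),\partial)$. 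The multiplicative structure on $E_0$ is inherited from (\ref{product}) and passes to each $E_r$; because $\partial + \delta$ is a derivation of the total complex, each $d_r$ is automatically a derivation of $E_r$.

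I do not anticipate a serious obstacle. The only subtlety is strong convergence as algebras: since $\mathfrak{L}$ (and therefore $[p,\infty)$) is finite, the rank $r(q)$ is bounded, the filtration $\tau^{-*}$ is bounded, and \cite[Theorem 2.14]{McCleary2000} applies to yield $E_\infty^{-*,*} \cong Gr_*^\tau H^*(Tot(A^*([p,\infty),\mathcal{C})))$ as bigraded algebras. The whole argument is essentially a direct transcription of the classical filtered-DGA spectral sequence once multiplicativity of $\tau^{-*}$ has been secured in the first step.
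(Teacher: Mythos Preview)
Your proposal is correct and matches the paper's approach exactly: the paper simply notes in the paragraph preceding the corollary that $\tau^{-*}$ is a multiplicative filtration ``by above discussing'' (i.e.\ by the product formula and the Leibniz laws just established) and then invokes \cite[Theorem~2.14]{McCleary2000}. Your elaboration of the multiplicativity via the submodular inequality, the $E_1/E_2$ identification via Corollary~\ref{SS0}, and the boundedness from finiteness of $\mathfrak{L}$ are precisely the details the paper leaves implicit.
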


 Our spectral sequence with the product structure may induce some good conditions of degeneration.  
\begin{thm}\label{DG1}
	Assume that the algebra $H^{-*}(A^*([p,\infty),H^*(\mathcal{C})))$ is generated by $H^0$ and $H^{-1}$ of the chain complex $(A^*([p,\infty),H^*(\mathcal{C})),\partial)$. Then the spectral sequence in  Corollary \ref{SS1} collapse at $E_2$; namely, $$\bigoplus_{i,j}E_2^{-i,j}\cong Gr_*^\tau H^*(Tot(A^*([p,\infty),\mathcal{C})))$$ as algebras.
\end{thm}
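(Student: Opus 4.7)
The plan is to exploit the fact that the spectral sequence of Corollary~\ref{SS1} is multiplicative: because $Tot(A^*([p,\infty),\mathcal{C}))$ is a differential graded algebra and the filtration $\tau^{-*}$ is multiplicative, each differential $d_r$ on $E_r^{*,*}$ is a derivation with respect to the induced product. Once this is in place, collapse will follow from a bidegree check on a set of generators, by the Leibniz rule.

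First I would record the bidegree of $d_r$ in our sign convention: since $\tau^{-*}$ is a decreasing filtration indexed by nonpositive integers $-i$ with $i=r(q)\geq 0$, we have $E_r^{-i,j}=0$ whenever $-i>0$, and the differentials run
\[
d_r:E_r^{-i,j}\longrightarrow E_r^{-i+r,\,j-r+1}.
\]
In particular, the target vanishes automatically whenever $-i+r>0$, i.e.\ whenever $r>i$.

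Next I would use the hypothesis. By assumption, the algebra $H^{-*}(A^*([p,\infty),H^*(\mathcal{C})))=\bigoplus_{i,j}E_2^{-i,j}$ is generated, as a bigraded algebra, by classes lying in $E_2^{0,*}$ and $E_2^{-1,*}$. For a class in $E_2^{0,j}$, the above bidegree count gives $d_r=0$ for all $r\geq 1$ since $E_r^{r,*}=0$; for a class in $E_2^{-1,j}$, one gets $d_r=0$ for all $r\geq 2$ since $E_r^{r-1,*}=0$. Lifting a generating set through the pages is harmless: at each stage the classes in these two columns are permanent cycles for purely degree-theoretic reasons, so they survive to $E_r$ and continue to generate $E_r$ as an algebra by induction on $r$. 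The Leibniz rule for $d_r$ then forces $d_r$ to vanish on the whole of $E_r$ for every $r\geq 2$, and we conclude $E_2=E_\infty$, which gives the desired isomorphism $\bigoplus E_2^{-i,j}\cong Gr_*^\tau H^*(Tot(A^*([p,\infty),\mathcal{C})))$ as algebras.

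The only delicate point I expect is the multiplicativity of the spectral sequence, i.e.\ that $d_r$ is a derivation of the induced bigraded algebra structure on $E_r$. This is exactly the content invoked by Corollary~\ref{SS1} (via \cite[Theorem~2.14]{McCleary2000}), so once we cite that result the rest is formal: a derivation that kills a generating set of the algebra is zero. Everything else is a routine bidegree accounting in the $(-i,j)$ plane.
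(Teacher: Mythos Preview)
Your argument is correct and essentially identical to the paper's own proof: both use that $E_r^{-i,j}=0$ for $i<0$ to see that $d_r$ vanishes on the columns $-i=0,-1$ for $r\geq 2$, then invoke the Leibniz rule and the generation hypothesis to kill $d_r$ everywhere and conclude $E_2=E_\infty$. The paper is slightly terser about the induction on $r$, but the content is the same.
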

\begin{proof}
	The "column-wise" filtration $$F^{-k}Tot(A^*([p,\infty),\mathcal{C}))=\bigoplus_{q \text{ with }r(q)\leq k+r(p)} A^*([p,\infty),\mathcal{C})_q$$ is an decreasing filtration, so the differential  $d_2$ maps $E_2^{-i,j}$ to $E_2^{-i+2,j-1}$. Then $d_2(E_2^{-1,j})=0$ and $d_2(E_2^{0,j})=0$ since $E_2^{-i,j}=H^{-i}(A^*([p,\infty),H^{j}(\mathcal{C})))=0$ for $i< 0$. Since we have assumed that $H^{-*}(A^*([p,\infty),H^*(\mathcal{C})))$ is generated by degree $H^0$ and $H^{-1}$, this means that $E_2^{i,j}$ is generated by $E_2^{1,*}$ and $E_2^{0,*}$. Furthermore,  $d_2(E_2^{-i,j})=0$ for all $i,j$ since $d_2$ satisfies the Leibniz law on $E_2$ term, so $E_2^{-i,j}=E_3^{-i,j}$, and  all $d_r$ for $r\geq 2$ are zero.
\end{proof}


\subsection{Construction of monoidal presheaf $\hat{\mathcal{C}}(\mathcal{A})$}\label{main coef sys}

Firstly, let us review some definition about presheaf of singular cochains.

\begin{defn}
	Let $C^*_0(X)$ be those singular cochains $f$ which are zero on all elements of a suitable open cover of $X$ (which may depends on $f$), define $\hat{C}^*(X)=C^*(X)/C^*_0(X)$. If $\varphi:Y\rightarrow X$ is a map of space, $\varphi^*: C^*(X)\rightarrow C^*(Y)$ maps $C^*_0(X)$ to $C^*_0(Y)$, so it induces a well defined map $\hat{\varphi}^*:\hat{C}^*(X)\rightarrow \hat{C}^*(Y)$. Let $\hat{C}^*(X,A)$ be the kernal of map $\hat{C}^*(X)\rightarrow \hat{C}^*(A)$ where $A$ is a subspace of $X$. Literally, $\hat{C}^*(X,A)$ is the subset of $\hat{C}^*(X)$, each of whose elements can be represented by a cochain $f$ which valuates  zero on all elements of a suitable open cover of $A$.
\end{defn}
\begin{rem}\label{sheaf sing}
	Let $\mathcal{S}^*:U\rightarrow C^*(U)$ be the presheaf of singular cochains on $X$ and $\hat{\mathcal{S}}^*$ be the associated sheaf.  Theorem 6.2 of \cite[Chapter I]{Bredon1997} tells us that $\hat{C}^*(X)=\varGamma(X,\hat{\mathcal{S}}^*)$ if $X$ is paracompact. $C^*_0(X)$ has zero cohomology by the discussion of subdivision, so the quotient map $C^*(X)\rightarrow \hat{C}^*(X)$ is a quasi-isomorphism of complexes.	
\end{rem}

\begin{lem}\label{lem of hat c}
	Let $A,B$ be open subspaces of a metric space $X$. Then
	\begin{enumerate}
	\item The cup product on $C^*(X)$ induces a cup product $$\cup: \hat{C}^*(X,A)\otimes \hat{C}^*(X,B)\rightarrow \hat{C}^*(X,A\cup B).$$
	\item There is a short exact sequence $$0\rightarrow \hat{C}^*(X,A\cup B)\rightarrow \hat{C}^*(X,A)\oplus  \hat{C}^*(X,B)\rightarrow \hat{C}^*(X,A\cap B)\rightarrow 0.$$
	\item If $\{A_i\}$ consists of  open subspaces of $X$ such that $A_i\cup A_j=X$ for any different $i,j$, then
	$$  \hat{C}^*(X,\bigcap_iA_i)=\bigoplus_i\hat{C}^*(X,A_i).$$
	\end{enumerate}
\end{lem}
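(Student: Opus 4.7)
My plan is to prove the three parts in the order stated, leveraging the sheaf-theoretic identification of Remark 3.10. Recall that for any paracompact space $Y$ one has $\hat{C}^*(Y)=\Gamma(Y,\hat{\mathcal{S}}^*)$; since $X$ is metric, every open $U\subseteq X$ is metric and hence paracompact, so this identification applies throughout. The crucial structural fact I will use is that the sheaf $\hat{\mathcal{S}}^*$ is flasque: the presheaf $U\mapsto C^*(U)$ is flasque (a cochain on $U$ extends to any larger open $V$ by setting it to zero on simplices whose image is not contained in $U$), and this flasqueness descends to $\hat{C}^*=C^*/C_0^*$ on opens.

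For (1), I would unwind the definitions directly. Pick representatives $f,g\in C^*(X)$ such that $f$ vanishes on every simplex contained in some element of an open cover $\mathcal{U}$ of $A$, and $g$ vanishes on every simplex contained in some element of an open cover $\mathcal{V}$ of $B$. Then $\mathcal{U}\cup\mathcal{V}$ is an open cover of $A\cup B$, and for any simplex $\sigma$ with image in some $U\in\mathcal{U}$, the front face $\sigma_{\mathrm{front}}$ also has image in $U$, so
\[(f\cup g)(\sigma)=f(\sigma_{\mathrm{front}})\,g(\sigma_{\mathrm{back}})=0;\]
the case of $\mathcal{V}$ is symmetric. Hence $f\cup g\in\hat{C}^*(X,A\cup B)$. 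Well-definedness modulo $C_0^*(X)$ follows from the same calculation, which in particular shows that $C_0^*(X)$ is a two-sided ideal under $\cup$.

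For (2), I will deduce the short exact sequence from the Mayer--Vietoris sequence for $\hat{\mathcal{S}}^*$ via a $3\times 3$ diagram chase. Flasqueness supplies two ingredients: first, each restriction $\hat{C}^*(X)\to\hat{C}^*(U)$ is surjective for open $U\subseteq X$, giving a short exact sequence
\[0\to\hat{C}^*(X,U)\to\hat{C}^*(X)\to\hat{C}^*(U)\to 0\]
for $U\in\{A,B,A\cap B,A\cup B\}$; second, flasqueness implies the Mayer--Vietoris sequence
\[0\to\hat{C}^*(A\cup B)\to\hat{C}^*(A)\oplus\hat{C}^*(B)\to\hat{C}^*(A\cap B)\to 0\]
is short exact. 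Arranging these as the columns of a $3\times 3$ diagram whose three rows are, in turn, the desired sequence, the tautological SES $0\to\hat{C}^*(X)\xrightarrow{\Delta}\hat{C}^*(X)^2\xrightarrow{-}\hat{C}^*(X)\to 0$, and the Mayer--Vietoris sequence, I obtain a short exact sequence of three-term complexes. The associated long exact sequence in homology, combined with the acyclicity of the middle and bottom rows, forces the top row to be acyclic as well, which is the desired short exact sequence; in particular the hard part, surjectivity of the right-hand map, follows automatically.

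For (3), I will induct on the cardinality $n$ of $\{A_i\}$. The pairwise covering hypothesis gives $A_i\cup A_j=X$ and hence $\hat{C}^*(X,A_i\cup A_j)=\hat{C}^*(X,X)=0$, so the case $n=2$ of (2) yields the internal direct sum $\hat{C}^*(X,A_1\cap A_2)=\hat{C}^*(X,A_1)\oplus\hat{C}^*(X,A_2)$. For the inductive step I observe that $A_n\cup A_i=X$ implies $X\setminus A_n\subseteq A_i$ for every $i<n$, so $X\setminus A_n\subseteq\bigcap_{i<n}A_i$ and therefore $A_n\cup\bigcap_{i<n}A_i=X$; applying (2) to these two opens and invoking the induction hypothesis on $\{A_i\}_{i<n}$ completes the argument. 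The main obstacle is part (2): one must verify carefully that $\hat{\mathcal{S}}^*$ is flasque on opens of $X$ and that the Mayer--Vietoris sequence is short exact on the nose---not merely up to quasi-isomorphism---before the $3\times 3$ chase can be executed; once this is in hand the other two parts are short.
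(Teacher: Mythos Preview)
Your proof is correct. Parts (1) and (3) match the paper's argument essentially verbatim: for (1) both of you observe that $\mathcal{U}\cup\mathcal{V}$ covers $A\cup B$ and the cup product vanishes term by term, and for (3) both of you induct after noting that $A_n\cup\bigcap_{i<n}A_i=X$ (the paper writes it as $A_1\cup(A_2\cap A_3\cap\cdots)=X$).

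The only genuine difference is in part (2). You package the argument as a $3\times 3$ diagram chase, feeding in the short exact Mayer--Vietoris sequence for the flasque sheaf $\hat{\mathcal{S}}^*$ together with the sequences $0\to\hat{C}^*(X,U)\to\hat{C}^*(X)\to\hat{C}^*(U)\to 0$. The paper instead works by hand: it views $\hat{C}^*(X,A)$ and $\hat{C}^*(X,B)$ as subgroups of $\hat{C}^*(X,A\cap B)$, notes their intersection is $\hat{C}^*(X,A\cup B)$, and proves surjectivity on the right by an explicit gluing---given $f\in\hat{C}^*(X,A\cap B)$, the sheaf axiom glues $f|_A$ and $0|_B$ to a section on $A\cup B$, which flasqueness then extends to some $g\in\hat{C}^*(X)$, and one checks $f=(f-g)+g$ is the desired splitting. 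Both routes rest on exactly the same two ingredients (the sheaf property of $\hat{\mathcal{S}}^*$ and flasqueness), so the content is the same; your version is cleaner to state but hides the decomposition inside the nine-lemma, while the paper's version produces the splitting explicitly.
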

\begin{proof}
	(1) Let $f_1$ (resp. $f_2$) be a cochain  of $\hat{C}^*(X,A)$ (resp. $\hat{C}^*(X,B)$) such that its valuation vanishes on all elements of open cover $\mathscr{U}_1$ (resp. $\mathscr{U}_2$) of $A$ (resp. $B$). Then $f_1\cup f_2$ is a cochain whose valuation vanishes on an open cover $\mathscr{U}_1\cup\mathscr{U}_2$ of $A\cup B$, representing an element of $\hat{C}^*(X,A\cup B)$. 

\vskip .1cm (2) $\hat{C}^*(X,A),\hat{C}^*(X,B)$ are both subgroups of $\hat{C}^*(X,A\cap B)$. It is clear that $\hat{C}^*(X,A)\cap \hat{C}^*(X,B)= \hat{C}^*(X,A\cup B)$. Thus it suffices to show $\hat{C}^*(X,A\cap B)=\hat{C}^*(X,A)+\hat{C}^*(X,B)$. Let $f$ represent an element of $\hat{C}^*(X,A\cap B)$. Two elements $f|A\in \hat{C}^*(A)$ and $0\in \hat{C}^*(B)$ give the same restriction $0$ on $A\cap B$, so we can "glue" them together and get an element $f_A \in \hat{C}^*(A\cup B)$ satisfying that $f_A|A=f|A$ and  $f_A|B=0$ since $U \longmapsto \hat{C}^*(U)$ is a sheaf by Remark \ref{sheaf sing}. Now let $g \in \hat{C}^*(X)$ such that $g|A\cup B=f_A$, then $f-f|A$ represents an element of $ \hat{C}^*(X,A)$ and $f|A$ represents an element of $\hat{C}^*(X,B)$. These two elements give us the required decomposition. 

\vskip .1cm (3) Notice that $A_1\cup (A_2\cap A_3\cap \cdots)=(A_1\cup A_2)\cap(A_1\cup A_3)\cap \cdots=X\cap X \cap\cdots=X$.  Then the required result follows by using (2) and an induction.
\end{proof}

Next let us return back to the manifold arrangement $\mathcal{A}$ with a quasi-intersection poset $\mathfrak{L}$.
\begin{defn}
	We define $$\hat{\mathcal{C}}(\mathcal{A})_p=\hat{C}^*(M,M-M_p)$$
\end{defn}

\begin{lem}
	$\hat{\mathcal{C}}(\mathcal{A})$ is a monoidal presheaf.
\end{lem}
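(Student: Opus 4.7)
The plan is to define the product on $\bigoplus_p \hat{\mathcal{C}}(\mathcal{A})_p$ as the restriction of the cup product on $\hat{C}^*(M)$, and then verify the three requirements of Definition \ref{monoidal presheaf}: associativity, the fact that $\hat{\mathcal{C}}(\mathcal{A})_p\cdot\hat{\mathcal{C}}(\mathcal{A})_q\subseteq \bigoplus_{s\in p\mathring{\vee}q}\hat{\mathcal{C}}(\mathcal{A})_s$, and the compatibility identities with the structure maps $f_{p,q}$. The key observation driving the whole argument is that each $\hat{\mathcal{C}}(\mathcal{A})_p=\hat{C}^*(M,M-M_p)$ is naturally a subcomplex of $\hat{C}^*(M)$, and for $p\ge q$ the structure map $f_{p,q}$ is literally the inclusion $\hat{C}^*(M,M-M_p)\hookrightarrow\hat{C}^*(M,M-M_q)$, since $M-M_p\supseteq M-M_q$.

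First I would treat the product landing condition. Given $a\in\hat{\mathcal{C}}(\mathcal{A})_p$ and $b\in\hat{\mathcal{C}}(\mathcal{A})_q$, Lemma \ref{lem of hat c}(1) gives $a\cup b\in \hat{C}^*(M,(M-M_p)\cup(M-M_q))=\hat{C}^*(M,M-(M_p\cap M_q))$. By the defining property of a quasi-intersection poset, $M_p\cap M_q=\bigsqcup_{s\in p\mathring{\vee}q}M_s$ as a disjoint union of closed subsets of $M$, so for distinct $s,t\in p\mathring{\vee}q$ the open sets $M-M_s$ and $M-M_t$ cover $M$. Hence Lemma \ref{lem of hat c}(3) applies and yields
$$\hat{C}^*(M,M-(M_p\cap M_q))=\bigoplus_{s\in p\mathring{\vee}q}\hat{C}^*(M,M-M_s)=\bigoplus_{s\in p\mathring{\vee}q}\hat{\mathcal{C}}(\mathcal{A})_s,$$
which gives the product landing condition. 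Associativity then follows at once from associativity of cup product in $\hat{C}^*(M)$, and the unit of the algebra is the class of $1\in\hat{C}^0(M)=\hat{\mathcal{C}}(\mathcal{A})_{\mathbf{0}}$.

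For the compatibility identities I would fix $q\le p$, $a\in\hat{\mathcal{C}}(\mathcal{A})_p$, $b\in\hat{\mathcal{C}}(\mathcal{A})_s$, and compare the two sides inside $\hat{C}^*(M)$. Since $f_{p,q}$ is simply inclusion, $b\cdot f_{p,q}(a)$ equals the same cochain $b\cup a\in \hat{C}^*(M)$ as the product $b\cdot a$; the only difference is that it is expressed relative to the finer decomposition associated with $q\mathring{\vee}s$ rather than $p\mathring{\vee}s$. The natural inclusion $\hat{C}^*(M,M-(M_p\cap M_s))\hookrightarrow \hat{C}^*(M,M-(M_q\cap M_s))$ sends each summand $\hat{C}^*(M,M-M_k)$ with $k\in p\mathring{\vee}s$ into the summand indexed by the unique $\lambda k\in q\mathring{\vee}s$ with $\lambda k\le k$ (Proposition \ref{canonical map}(3)); regrouping these inclusions according to the fibres of $\lambda=\lambda_{psqs}$ reproduces the map $f_{p\mathring{\vee}s,q\mathring{\vee}s}$ by its definition, and the identity $b\cdot f_{p,q}(a)=f_{p\mathring{\vee}s,q\mathring{\vee}s}(b\cdot a)$ follows. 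The other identity is identical after swapping factors. The main obstacle here is purely notational bookkeeping: one must check that the disjoint-union decomposition of $M_p\cap M_s$ embeds cleanly into that of $M_q\cap M_s$ via the canonical poset map, which is exactly the content of Proposition \ref{canonical map}(3). Once this naturality is set up, the compatibility identities are tautological.
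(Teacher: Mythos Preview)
Your proof is correct and follows essentially the same route as the paper: use Lemma~\ref{lem of hat c}(1) together with the quasi-intersection identity $M_p\cap M_q=\bigsqcup_{s\in p\mathring{\vee}q}M_s$ and Lemma~\ref{lem of hat c}(3) to obtain the product landing condition, and then observe that the compatibility identities are tautological because every $f_{p,q}$ is an inclusion in $\hat{C}^*(M)$, so both sides of each identity are literally the same cochain $b\cup a$ (resp.\ $a\cup b$). Your additional unpacking of $f_{p\mathring{\vee}s,\,q\mathring{\vee}s}$ via the canonical map $\lambda$ of Proposition~\ref{canonical map}(3) is a welcome clarification but not a different argument.
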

\begin{proof}
	The map $f_{p,q}: \hat{\mathcal{C}}(\mathcal{A})_p \rightarrow \hat{\mathcal{C}}(\mathcal{A})_q$ induced by inclusion make $\hat{\mathcal{C}}(\mathcal{A})$ is a presheaf. The cup product $\cup$ maps $\hat{\mathcal{C}}(\mathcal{A})_p\otimes \hat{\mathcal{C}}(\mathcal{A})_q$ to $\hat{C}^*(M,M-M_p\cap M_q)$ by Lemma \ref{lem of hat c}(1), where $M_p\cap M_q=\sqcup_{ s\in p\mathring{\vee} q}M_s$. Then $\hat{C}^*(M,M-M_p\cap M_q)=\bigoplus_{ s\in p\mathring{\vee} q}\hat{\mathcal{C}}(\mathcal{A})_s$ by  Lemma \ref{lem of hat c}(3). The condition $b\cdot
	f_{p,q}(a)=f_{p\mathring{\vee} s, q\mathring{\vee} s}(b\cdot a)$ (resp. $f_{p,q}(a)\cdot b=f_{p\mathring{\vee} s, q\mathring{\vee} s}(a\cdot b)$) of monoidal presheaf is trivial in this case since each $f_{*,*}$ is an inclusion map and each side of equation is represented by $b\cup a$ (resp. $a\cup b$).
\end{proof}

\vskip .2cm

\begin{rem}\label{diff presheaf}
As two monoidal presheaves of cohomology groups, $H^*(\hat{\mathcal{C}}(\mathcal{A}))$ and $H^*(\mathcal{C}(\mathcal{A}))$ have  no any difference  by the discussion of subdivision.  They both are isomorphic to the monoidal presheaf $H^*(\mathcal{A})_p=H^*(M,M-M_p)$.
\end{rem}


	\textbf{Notation.} For any manifold arrangement $\mathcal{A}$ with a quasi-intersection poset $\mathfrak{L}$, the double complex $A^*(\mathfrak{L},\hat{\mathcal{C}}(\mathcal{A}))$ is an algebra as discussed in last subsection, abbreviate it as $A^*(\mathcal{A})$, which is called the {\em global OS-algebra} associated with  $\mathcal{A}$.

\vskip .2cm
We are going to prove a similar result of Theorem \ref{main} in the sense of algebras. For each $p\in \mathfrak{L}$, given  a similar filtration of  $\hat{\mathcal{C}}(\mathcal{A})$ as
$$ F^i\hat{\mathcal{C}}(\mathcal{A})_p=\hat{C}^*(M,F_p^iM).$$
 This is a similar "hat" version of the filtration $F^i\mathcal{C}(\mathcal{A})_p$ appeared in Definition \ref{presheaf of SS}. We can  calculate the cohomology algebra of $Tot(A^*([p,\infty),\hat{\mathcal{C}}(\mathcal{A})))$ by this filtration.
\begin{thm}\label{main alg}
	$H^*(Tot(A^*([p,\infty),\hat{\mathcal{C}}(\mathcal{A}))))$ and $H^*(N(S_p),N(S_p)_0)$ are isomorphic as algebras. In particular, for $p=\textbf{0}$, $H^*(Tot(A^*(\mathcal{A})))$ and $H^*(\mathcal{M}(\mathcal{A}))$ are isomorphic as algebras.
\end{thm}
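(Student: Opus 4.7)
The plan is to upgrade the proof of Theorem~\ref{main} so that every construction and intermediate map is genuinely multiplicative. By Section~\ref{main coef sys}, $\hat{\mathcal{C}}(\mathcal{A})$ is a monoidal presheaf of DGA, so by Section~\ref{our double complex} the total complex $Tot(A^*([p,\infty),\hat{\mathcal{C}}(\mathcal{A})))$ is an honest DGA under the product~(\ref{product}). It therefore suffices to exhibit a surjective DGA homomorphism from this complex onto a DGA whose cohomology realizes $H^*(N(S_p),N(S_p)_0)$ as a graded algebra, and to verify that this map is a quasi-isomorphism.

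I will use the hat-analog of the filtration of Definition~\ref{presheaf of SS}, namely $F^i\hat{\mathcal{C}}(\mathcal{A})_p := \hat{C}^*(M, F^i_pM)$, which induces a decreasing filtration $F^iTot$ on the total complex. A direct check from the definitions shows that, on the sub-poset $[p,\infty)$, $F^{r(p)}\hat{\mathcal{C}}(\mathcal{A})_\alpha = \hat{\mathcal{C}}(\mathcal{A})_\alpha$ for every $\alpha \geq p$, while $F^{r(p)+1}\hat{\mathcal{C}}(\mathcal{A})_\alpha = \hat{\mathcal{C}}(\mathcal{A})_\alpha$ for $\alpha > p$ and $F^{r(p)+1}\hat{\mathcal{C}}(\mathcal{A})_p = \hat{C}^*(M, F^{r(p)+1}_pM)$ is a proper subcomplex of $\hat{\mathcal{C}}(\mathcal{A})_p$. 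The key new claim is that $F^{r(p)+1}Tot$ is a two-sided ideal in $Tot$, so that the quotient is a DGA. To establish this, observe that any product $\gamma_\alpha \cdot \delta_\beta$ with $\alpha,\beta \in [p,\infty)$ decomposes via $s \in \alpha \mathring{\vee} \beta$, and each such $s$ satisfies $s \geq p$, so the $p$-component is nonzero only when $\alpha = \beta = p$. In that situation Lemma~\ref{lem of hat c}(1) gives $\hat{C}^*(M,F^{r(p)+1}_pM) \cdot \hat{C}^*(M,M-M_p) \subset \hat{C}^*(M, F^{r(p)+1}_pM \cup (M-M_p)) = \hat{C}^*(M, F^{r(p)+1}_pM)$, the last equality because $M-M_p \subset F^{r(p)+1}_pM$ (the layers removed to form $F^{r(p)+1}_pM$ are all contained in $M_p$). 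Only the $\alpha = p$ level survives in the quotient and $A^*([p,\infty))_p = R$, so one identifies $Tot/F^{r(p)+1}Tot \cong \hat{C}^*(F^{r(p)+1}_pM, F^{r(p)}_pM)$ as DGAs with the induced cup product.

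Finally, I will show that the quotient map $Tot(A^*([p,\infty),\hat{\mathcal{C}}(\mathcal{A}))) \twoheadrightarrow \hat{C}^*(F^{r(p)+1}_pM, F^{r(p)}_pM)$ is a quasi-isomorphism by running the spectral sequence argument of Theorem~\ref{main} verbatim with $\hat{C}^*$ in place of $C^*$: the hat-analog of Theorem~\ref{presheaf of E1} goes through because $\hat{C}^*$ satisfies the same excision and exactness, so the $E_1$-page is again a sum of skyscraper presheaves, and Lemma~\ref{sky} forces the cohomology to concentrate at $i = r(p)$. Composing with the excision isomorphism $H^*(F^{r(p)+1}_pM, F^{r(p)}_pM) \cong H^*(N(S_p), N(S_p)_0)$ of Remark~\ref{rem of excision} gives the first assertion; specializing to $p = \mathbf{0}$, where $F^1_\mathbf{0}M = \mathcal{M}(\mathcal{A})$ and $F^0_\mathbf{0}M = \emptyset$, recovers the quotient map $Tot(A^*(\mathcal{A})) \to \hat{C}^*(\mathcal{M}(\mathcal{A}))$ of Corollary~\ref{main cor} and produces the desired algebra isomorphism $H^*(Tot(A^*(\mathcal{A}))) \cong H^*(\mathcal{M}(\mathcal{A}))$.

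The main obstacle is the ideal check in the second step. It is the only place where the multiplicative structure is really tested, and it rests on two nontrivial combinatorial/geometric facts: that $s \in \alpha \mathring{\vee} \beta$ with $\alpha,\beta \geq p$ must satisfy $s \geq p$ in $\mathfrak{L}$, and that the set inclusion $M-M_p \subset F^{r(p)+1}_pM$ makes the cup product land back in the filtered piece. Once this compatibility is in place, every other step is a routine upgrade of the module-level proof from Section~3.
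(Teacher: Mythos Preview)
Your proposal is correct and follows essentially the same route as the paper's own proof: run the module-level argument of Theorem~\ref{main} with $\hat{C}^*$ in place of $C^*$, then observe that the resulting quotient map is a map of DGAs. The paper compresses this into the single sentence ``This isomorphism is induced by a quotient map of algebras, so it is also an isomorphism of algebras,'' whereas you spell out the ideal verification (that $F^{r(p)+1}Tot$ is a two-sided ideal, using Lemma~\ref{lem of hat c}(1) and the inclusion $M-M_p\subset F^{r(p)+1}_pM$); this is the honest content behind the paper's terse claim and your argument for it is sound.
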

\begin{proof}
	There is no difference between the homologies of $F^i\hat{\mathcal{C}}(\mathcal{A})_p$ and $F^i\mathcal{C}(\mathcal{A})_p$ by subdivision. So all processes in the proof of theorem \ref{main} can still be carried out very well for the filtration $F^i\hat{\mathcal{C}}(\mathcal{A})_p$. Then we first have that
	$H^*(Tot(A^*([p,\infty),\hat{\mathcal{C}}(\mathcal{A}))))\cong H^*(N(S_p),N(S_p)_0)$ as modules. This isomorphism is  induced by a quotient map of algebras, 
so it is also an isomorphism of  algebras.
\end{proof}

\subsection{Product structure on our spectral sequence}
Now  applying  Corollary \ref{SS1}, we have
\begin{cor}\label{SS alg}
	There is a spectral sequence associated with filtration $\tau^{-*}$ of the double complex $A^*([p,\infty),\hat{\mathcal{C}}(\mathcal{A}))$ such that
	$$E_1^{-i,j}=A^*([p,\infty),H^{j}(\mathcal{A}))_{-i}$$
with $d_1=\partial$, and
	 $$E_2^{-i,j}=H^{-i}(A^*([p,\infty),H^{j}(\mathcal{A})),\partial),$$
which converges to $H^*(N(S_p),N(S_p)_0)$ as algebras. In particular, for $p=\textbf{0}$, there exists a spectral sequence with  $$E_2^{-i,j}=H^{-i}(A^*(\mathfrak{L},H^{j}(\mathcal{A})),\partial),$$
which converges to $H^*(\mathcal{M}(\mathcal{A}))$ as algebras, i.e. $Gr_*^\tau H^*(\mathcal{M}(\mathcal{A}))\cong H^{-*}(A^*(\mathfrak{L},H^{*}(\mathcal{A})),\partial)$ as algebras.
\end{cor}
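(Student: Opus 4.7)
The plan is to obtain the corollary as an essentially direct specialization of Corollary~\ref{SS1} to the presheaf $\hat{\mathcal{C}}(\mathcal{A})$, combined with Theorem~\ref{main alg}. First I would verify that $\hat{\mathcal{C}}(\mathcal{A})$ satisfies the hypothesis of Corollary~\ref{SS1}: it is a monoidal presheaf of DGA on the locally geometric poset $[p,\infty)$, since $\hat{\mathcal{C}}(\mathcal{A})$ was shown to be a monoidal presheaf in Section~\ref{main coef sys}, the differential $\delta$ is induced from the singular cochain differential (so the Leibniz rule for the cup product transfers from $\hat{C}^*(M)$), and restriction to any up-set $[p,\infty)$ preserves all this structure. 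Hence Corollary~\ref{SS1} applies to $A^*([p,\infty),\hat{\mathcal{C}}(\mathcal{A}))$ and yields a multiplicative spectral sequence abutting to $H^*(Tot(A^*([p,\infty),\hat{\mathcal{C}}(\mathcal{A}))))$ as algebras, with
$$E_2^{-i,j}=H^{-i}(A^*([p,\infty),H^{j}(\hat{\mathcal{C}}(\mathcal{A}))),\partial).$$

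Next I would rewrite the $E_2$-term in the form stated. By Remark~\ref{diff presheaf}, the quasi-isomorphism $C^*\to\hat{C}^*$ induces an isomorphism of monoidal presheaves $H^*(\hat{\mathcal{C}}(\mathcal{A}))\cong H^*(\mathcal{C}(\mathcal{A}))=H^*(\mathcal{A})$ on $\mathfrak{L}$ (and hence on the sub-poset $[p,\infty)$), and this isomorphism is compatible with cup product. Therefore $H^{-i}(A^*([p,\infty),H^{j}(\hat{\mathcal{C}}(\mathcal{A}))),\partial)\cong H^{-i}(A^*([p,\infty),H^{j}(\mathcal{A})),\partial)$ as algebras, which is the announced $E_2$-term. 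To identify the $E_1$-page, I would compute it directly from the definition of the filtration $\tau^{-*}$: the associated graded of $\tau^{-*}Tot(A^*([p,\infty),\hat{\mathcal{C}}(\mathcal{A})))$ in column $-i$ is $\bigoplus_{r(q)=i+r(p)}A^*([0,q])_q\otimes \hat{\mathcal{C}}(\mathcal{A})_q$ with only the $\delta$-differential surviving, so taking vertical cohomology gives $E_1^{-i,j}=A^*([p,\infty),H^j(\mathcal{A}))_{-i}$, and the induced $d_1$ is precisely $\partial$ by the definition of the double complex.

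To identify the abutment, I would invoke Theorem~\ref{main alg}, which asserts that $H^*(Tot(A^*([p,\infty),\hat{\mathcal{C}}(\mathcal{A}))))\cong H^*(N(S_p),N(S_p)_0)$ as algebras. This gives convergence as algebras to the desired target, and the filtration on the abutment is by construction the filtration induced by $\tau^{-*}$, so $E_\infty^{-*,*}\cong Gr^{\tau}_* H^*(N(S_p),N(S_p)_0)$ as algebras. The special case $p=\mathbf{0}$ is immediate from the identifications $N(S_\mathbf{0})=S_\mathbf{0}=\mathcal{M}(\mathcal{A})$ and $N(S_\mathbf{0})_0=\emptyset$, giving $H^*(N(S_\mathbf{0}),N(S_\mathbf{0})_0)=H^*(\mathcal{M}(\mathcal{A}))$.

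Essentially no step is a genuine obstacle, since all the heavy lifting has been done earlier (monoidality of $\hat{\mathcal{C}}(\mathcal{A})$, multiplicativity of the spectral sequence for a filtered DGA, and Theorem~\ref{main alg} for the abutment). The one small point requiring care is the compatibility of the algebra isomorphism $H^*(\hat{\mathcal{C}}(\mathcal{A}))\cong H^*(\mathcal{A})$ on $E_2$: one needs that the quasi-isomorphism $\mathcal{C}(\mathcal{A})\to\hat{\mathcal{C}}(\mathcal{A})$ commutes with all the restriction maps $f_{p,q}$ and with cup products, so that it induces an isomorphism of monoidal presheaves after taking cohomology. This is routine from the naturality of the sheafification in Remark~\ref{sheaf sing}, but it is the only place where the bookkeeping should be spelled out.
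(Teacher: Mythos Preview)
Your proposal is correct and matches the paper's approach exactly: the paper derives this corollary by simply ``applying Corollary~\ref{SS1}'' to $\hat{\mathcal{C}}(\mathcal{A})$, with Theorem~\ref{main alg} supplying the identification of the abutment and Remark~\ref{diff presheaf} the identification $H^*(\hat{\mathcal{C}}(\mathcal{A}))\cong H^*(\mathcal{A})$. You have spelled out more of the bookkeeping than the paper does, but the strategy is identical.
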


\begin{rem}
	Note that $H^*(\mathcal{A})$ is a monoidal presheaf on $\mathfrak{L}$, and the product structure on $E_2$  is given by the 'global' OS-algebra $A^*([p,\infty),H^*(\mathcal{C}(\mathcal{A})))$.
\end{rem}

\begin{example}
	Although the double complex in Theorem \ref{main alg} may be very complicated, the $E_1$ page of above spectral sequence can be simple in some cases. Actually we can write down it explicitly, see Theorem \ref{str of presheaf} as an example. In particular, this spectral sequence degenerates at $E_2$ page for the case of complex projective varieties, see following subsection.
\end{example}

\subsection{Arrangements of subvariety}
In this subsection, we assume that $M$ is a complex projective smooth variety and each $N_i\in \mathcal{A}$ is also a smooth subvariety.

\begin{thm}\label{ring}	
	Using $\mathbb{Q}$ as coefficients, we have $$H^*(\mathcal{M}(\mathcal{A}))  \cong H^{-*}(A^*(\mathfrak{L},H^{*}(\mathcal{A})),\partial)$$ as algebras, where the product of ring in the above right side is given by Definition \ref{OS-alg with coef}.
\end{thm}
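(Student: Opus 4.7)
The plan is to promote the module isomorphism of Theorem \ref{main alg} and the graded algebra isomorphism of Corollary \ref{alg arg} to an actual algebra isomorphism by combining the spectral sequence of Corollary \ref{SS alg} with a formality argument. First I would invoke Theorem \ref{main alg} to identify $H^*(\mathcal{M}(\mathcal{A}))$ with $H^*(Tot(A^*(\mathcal{A})))$ as algebras; the problem then reduces to showing that the total complex collapses to the simpler $H^{-*}(A^*(\mathfrak{L},H^*(\mathcal{A})),\partial)$, i.e., that the $\delta$-direction contributes nothing beyond what is visible in the cohomology presheaf.

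Next I would establish degeneration at $E_2$ of the spectral sequence of algebras of Corollary \ref{SS alg}. Replacing $\hat{\mathcal{C}}(\mathcal{A})$ by the mixed Hodge complex presheaf $\mathcal{K}(\mathcal{A})$ of Theorem \ref{mix hodge}, I would work inside mixed Hodge structures. Since $M$ and every quasi-layer $M_p$ are smooth projective, the Thom isomorphism $H^j(M,M-M_p)\cong H^{j-2c_p}(M_p)(-c_p)$ shows that $H^j(\mathcal{A})_p$ is pure of weight $j$, so each $E_2^{-i,j}$ is pure of weight $j$. The differentials $d_r\colon E_r^{-i,j}\to E_r^{-i+r,j-r+1}$ for $r\ge 2$ must preserve weight (being morphisms of MHS) while simultaneously shifting it by $-(r-1)\neq 0$, hence they vanish. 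Combining the resulting $E_2=E_\infty$ with Theorem \ref{compare thm} (which identifies $\tau^{-*}$ with $W_*$ on cohomology) recovers the bigraded algebra isomorphism $Gr_W^*H^*(\mathcal{M}(\mathcal{A}))\cong H^{-*}(A^*(\mathfrak{L},H^*(\mathcal{A})),\partial)$ of Corollary \ref{alg arg}.

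The main obstacle is lifting this from the associated graded $Gr_W^*$ to $H^*(\mathcal{M}(\mathcal{A}))$ itself, since the weight filtration need not split canonically as an algebra filtration. My plan is a formality argument: because each $M_p$ is smooth projective, by Deligne--Griffiths--Morgan--Sullivan each dga $\hat{C}^*(M_p)$ is formal, and by Thom, so is each $\hat{C}^*(M,M-M_p)$ up to a Tate twist. The key step is to arrange these formality quasi-isomorphisms compatibly with the structure maps $f_{p,q}$ and with the monoidal product, producing a zigzag of quasi-isomorphisms of monoidal presheaves of DGA between $\hat{\mathcal{C}}(\mathcal{A})$ and the cohomology presheaf $H^*(\mathcal{A})$ (with zero internal differential). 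A natural route is to build this zigzag from bigraded representatives supplied by the Hodge and weight filtrations on $\mathcal{K}(\mathcal{A})$, exploiting the $E_1$-degeneration of the Hodge-to-de-Rham spectral sequence and the splitting of the weight filtration into its pure pieces in the projective setting.

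Any such zigzag induces a DGA quasi-isomorphism $Tot(A^*(\mathcal{A}))\simeq Tot(A^*(\mathfrak{L},H^*(\mathcal{A})))$; since the differential on the right is purely $\partial$, its cohomology is exactly $H^{-*}(A^*(\mathfrak{L},H^*(\mathcal{A})),\partial)$ as an algebra, and chaining with Theorem \ref{main alg} yields the theorem. The delicate point, which I expect to be the hardest step, is the functoriality of formality across the poset $\mathfrak{L}$ together with compatibility with the monoidal products $\mathcal{C}_s\cdot\mathcal{C}_t\subseteq\mathcal{C}_{s\mathring{\vee}t}$; the purity/degeneration argument alone recovers only the $Gr_W$ statement and does not produce an algebra splitting of the weight filtration.
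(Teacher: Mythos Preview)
Your degeneration argument (purity of $H^j(\mathcal{A})_p$ forcing $d_r=0$ for $r\ge 2$, together with Theorem~\ref{compare thm} to identify $\tau$ and $W$) is essentially what the paper does. The divergence is in the final step, where you want to pass from $Gr_W^*H^*(\mathcal{M}(\mathcal{A}))$ to $H^*(\mathcal{M}(\mathcal{A}))$ itself.

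The paper avoids your formality program entirely. It simply invokes the Deligne splitting on the cohomology of $\mathcal{M}(\mathcal{A})$: for any complex algebraic variety $U$, the bigrading $I^{p,q}\subset H^*(U)$ satisfies $I^{p,q}\cup I^{s,t}\subseteq I^{p+s,q+t}$, so the projection $\bigoplus_{p+q=k}I^{p,q}\to Gr_k^W$ is a natural \emph{algebra} isomorphism $H^*(U)\cong Gr_W^*H^*(U)$. Applying this to $U=\mathcal{M}(\mathcal{A})$ and chaining with $Gr_*^\tau=Gr_*^W$ and $E_2=E_\infty$ gives the theorem in one line. No cochain-level construction is needed; everything happens on cohomology with its mixed Hodge structure.

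Your route, by contrast, asks for a zigzag of quasi-isomorphisms of \emph{monoidal presheaves of DGA} between $\hat{\mathcal{C}}(\mathcal{A})$ and $H^*(\mathcal{A})$. This is not a consequence of DGMS formality: that theorem is about a single compact K\"ahler manifold, and the quasi-isomorphisms it produces are not functorial for maps of varieties, let alone compatible with the external products $\mathcal{C}_s\cdot\mathcal{C}_t\subseteq\mathcal{C}_{s\mathring{\vee}t}$. You correctly flag this as the hardest step, but as stated it is a genuine gap: there is no standard tool that delivers functorial, monoidally compatible formality over an arbitrary poset of smooth projective inclusions. The paper's observation that the Deligne splitting already multiplies correctly on $H^*$ is exactly the shortcut you are missing.
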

\begin{proof}
	Deligne proved in \cite{Deligne1974} that the cohomology ring of any algebraic variety is isomorphic to the associated graded ring with respect to the weight filtration. Actually, this isomorphism can be defined by Deligne splitting
	$$I^{p,q} := F^p\cap W_{p+q}\cap(\overline{F^q}\cap W_{p+q} +\sum_{j\geq 2}\overline{F^{q-j+1}}\cap W_{p+q-j})$$
	see \cite[Lemma-Definition 3.4]{Board2008} for details.  Mapping each $\bigoplus_{p+q=k}I^{p,q}$ to $Gr_{k}^W$ by projection (notice that the cup product is with respect to $F,W$), we see that $I^{p,q}\cup I^{s,t} \subseteq I^{p+s,q+t}$, so this map preserves the product and is an natural isomorphism  $\lambda: H^*(U)\rightarrow \bigoplus_kGr_k^WH^*(U)$ as algebras.

\vskip .2cm	
	Compare Corollaries \ref{alg arg} and \ref{SS alg}, the weight filtration and $\tau^{-*}$ have the same $E_2$ page, so the spectral sequence in Corollary \ref{SS alg} also degenerates on $E_2$, which means that $Gr_*^{\tau}H^*(\mathcal{M}(\mathcal{A}))\cong E_2^{*,*}$ as algebras. Meanwhile, $Gr_*^{\tau}H^*(\mathcal{M}(\mathcal{A}))=Gr_*^{W}H^*(\mathcal{M}(\mathcal{A}))$ by Theorem \ref{compare thm}, so $H^{-*}(A^*(\mathfrak{L},H^{*}(\mathcal{A}))\cong E_2^{*,*}\cong Gr_*^{W}H^*(\mathcal{M}(\mathcal{A}))\cong H^*(\mathcal{M}(\mathcal{A}))$ as algebras (probably with different mixed Hodge structures).
\end{proof}

\section{Application to chromatic configuration spaces}\label{chromatic space}
\subsection{Chromatic configuration space}
In the classical vertex coloring problem of a graph $G$, a usual way we use is to color the vertices of $G$ with  $m$ colors from $[m]=\{1, ..., m\}$, so that adjacent vertices would receive different colors, a so-called proper $m$-coloring. It is well-known that the number $\chi_G(m)$ of proper $m$-colorings  is a polynomial of $m$, called the {\em chromatic polynomial} of $G$. In this section, we consider the problem, but we will use  a manifold $M$ as a color set to color the vertices of $G$, and the resulting colorings will also form a manifold,  called the chromatic configuration space of $M$ on $G$.

\begin{defn}\label{coloring}
Let $G$ be a simple graph with  vertex set $[n]=\{1, ..., n\}$ and  $M$ be a smooth manifold without boundary.	Then  the \textbf{chromatic configuration space} of $M$ on $G$ consists of all the proper colorings of $G$ with all points of $M$ as colors
	$$F(M,G) = \{(x_1,...,x_n)\in M^n|(i,j)\in E(G)\Rightarrow x_i\neq x_j\}$$
	where $E(G)$ denotes the set of all edges of $G$.
\end{defn}

  This generalizes the concept of the classical configuration space  $$F(M,n)=\{(x_1,...,x_n) \in M^n |i\neq j\Rightarrow x_i\neq x_j\}.$$ Actually, $F(M,G)=F(M, n)$ when $G$ is a complete graph. In the viewpoint of configuration spaces, the definition of $F(M, G)$ first appeared in the work of  Eastwood and Huggett~\cite{Eastwood2007}, where $F(M, G)$ was called the generalized configuration space, and the case in which $M$ is a  Riemann surface  was studied in \cite{Berceanu2017}. In addition,  Dupont in~\cite{Dupont2013} also studied the hypersurface arrangements. In this section, we will study the more general case of $F(M,G)$ by our approach about manifold arrangements.

\vskip .2cm
Following the assumption and the notion in Definition~\ref{coloring}, by $L_G$ we denote the associated geometric lattice of $G$, also see Example \ref{ex graph}. It is well-known that $L_G$ is a geometric lattice (the matroid associated with this lattice is also known as the cycle matroid of $G$), the partition induced by a single edge is an atom of $L_G$. By $\mathbf{0}$ and $\mathbf{1}$ we also denote the minimum and maximum element of $L_G$, respectively.

\begin{defn}
	For $x=(x_1, ..., x_n)\in  M^n$,  
define $G(x)$ to be the spanning subgraph given by those edges $(i,j)$ in $E(G)$  with $x_i=x_j$. By $L_G(x)$ we denote the element of $L_G$ induced by the subgraph $G(x)$.
For $p\in L_G$, define the $L_G$-indexed 'diagonal' of $M^n$ as $\Delta_p=\{x\in M^n| L_G(x)\geq p\}$.
\end{defn}

\begin{thm}
	 The set of diagonal  $\mathcal{A}_G=\{\Delta_{a}|a\in \text{\rm Atom}(L_G)\}$ is a manifold arrangement in $M^n$ such that
 each $\Delta_{a}$ is closed in $M^n$, and  the  intersection lattice of $\mathcal{A}_G$ is just $L_G$. Choose quasi-intersection poset to be intersection lattice. In particular,
	 for $p\in L_G$,  $(M^n)_p$ equals the diagonal $\Delta_p$ defined as above.
	
\end{thm}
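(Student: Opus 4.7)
The strategy is to verify Bott's clean intersection condition for $\mathcal{A}_G$ and then exhibit $L_G$ directly as a quasi-intersection poset.

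First, for each atom $a=(i,j)\in \mathrm{Atom}(L_G)$ (coming from an edge of $G$), the diagonal $\Delta_a=\{x\in M^n:x_i=x_j\}$ is the preimage of the diagonal $\Delta M\subset M\times M$ under the smooth submersion $\pi_{ij}:M^n\to M\times M$, $x\mapsto(x_i,x_j)$. Since $\Delta M$ is closed in $M\times M$ and is a smooth submanifold, $\Delta_a$ is a closed smooth submanifold of $M^n$ of codimension $\dim M$. For the clean intersection property, fix $x\in M^n$ and let $q\in L_G$ be the partition generated by those edges $(i,j)\in E(G)$ with $x_i=x_j$; this is the maximal element of $L_G$ with $x\in \Delta_q$. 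For each block $c$ of $q$ I would choose a single coordinate chart $\psi_c:V_c\to W_c\subset\mathbb{R}^m$ around the common value $x_k$ ($k\in c$), and then shrink a neighborhood $U$ of $x$ so that (i) every $\Delta_a$ with $a\not\leq q$ is disjoint from $U$, and (ii) the $k$-th coordinate of every $y\in U$ lies in $V_{[k]}$. The product chart $\prod_k\psi_{[k]}$ then identifies $U$ with an open subset of $\mathbb{R}^{nm}$ in which each $\Delta_a$ for $a=(i,j)\leq q$ becomes the linear subspace $\{y_i=y_j\}$, where $y_i$ and $y_j$ live in the same $\mathbb{R}^m$-factor because $[i]=[j]$. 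This exhibits $\mathcal{A}_G$ locally as the restriction of a subspace arrangement in $\mathbb{R}^{nm}$.

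Next, a transitive-closure argument gives $\bigcap_{a\in S}\Delta_a=\Delta_{\vee S}$ for any $S\subseteq\mathrm{Atom}(L_G)$: forcing $x_i=x_j$ along every edge of $S$ and closing under transitivity is exactly the requirement that $x$ be constant on each connected component of the spanning subgraph with edge set $S$, and these components form the blocks of $\vee S\in L_G$. In particular every intersection of atoms has the form $\Delta_p$ for a unique $p\in L_G$, with $\Delta_{\mathbf{0}}=M^n$. The same argument shows $\Delta_p\cap\Delta_q=\Delta_{p\vee q}$ for $p,q\in L_G$; since $L_G$ is a lattice, $p\mathring{\vee}q=\{p\vee q\}$, so condition (2) in the definition of quasi-intersection poset reduces to this identity. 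Setting $(M^n)_p:=\Delta_p$ therefore makes $L_G$ a quasi-intersection poset of $\mathcal{A}_G$, and $L_G$ is locally geometric because it is geometric (Example~\ref{ex graph}).

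The hard part is the clean intersection step. The key subtlety there is to pick one common chart per block of $q$ rather than independent charts at each $x_k$; only this choice makes all the diagonals $\Delta_a$ with $a\leq q$ simultaneously linear in the resulting product chart on $M^n$, which is what is needed for a genuine local diffeomorphism to a subspace arrangement. Everything else reduces to routine bookkeeping once this identification is in hand.
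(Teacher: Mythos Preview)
Your proof is correct and follows essentially the same route as the paper's: pick, for each block of the partition determined by the coincidences at $x$, a single coordinate chart and use the resulting product chart to linearize all the relevant diagonals, then compute $\Delta_p\cap\Delta_q=\Delta_{p\vee q}$ to identify the intersection lattice with $L_G$. Your write-up is in fact a bit more explicit than the paper's about why a common chart per block is needed and about the transitive-closure step behind $\bigcap_{a\in S}\Delta_a=\Delta_{\vee S}$.
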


\begin{proof} In order to show that $\mathcal{A}_G=\{\Delta_{a}|a\in \text{\rm Atom}(L_G)\}$ is a manifold arrangement, it suffices to check that $\mathcal{A}_G$ is locally diffeomorphic to a subspace arrangement. In fact, choose $x=(x_1,x_2,...,x_n)\in M^n$ and its sufficient small neighborhood $U=U_1\times U_2\times\cdots\times U_n$ with $x_i\neq x_j\Rightarrow U_i\cap U_j= \emptyset$ and $x_i=x_j \Rightarrow U_i=U_j$, such that each $U_i$ is diffeomorphic to some $\mathbb{R}$-linear space $W_i$.  Clearly, the intersection $\Delta_{a}\cap U$ is some diagonal subspace (i.e., the subspace with some coordinates being equal) of $\prod U_i$, which is diffeomorphic to some diagonal subspace of $W=\prod W_i$. Since each diagonal subspace of $ W$ is linear,  the diffeomorphism  between $U$ and $W$ maps $\{\Delta_{a}\cap U\}$ onto a subspace arrangement in $W$.

\vskip .2cm
	It is obvious that each diagonal $\Delta_{a}$ is closed. Since
	$\Delta_p \cap \Delta_q= \{x\in M^n| L_G(x)\geq p \text{~and~} L_G(x)\geq q\} = \{x\in M^n| L_G(x)\geq p\vee q\} = \Delta_{p\vee q}$, we see that the intersection lattice of arrangements $\Delta_{a}, a\in \text{\rm Atom}(L_G)$ is just $L_G$ and $(M^n)_p=\Delta_p$.
\end{proof}


Consider the monoidal presheaf $\hat{\mathcal{C}}(\mathcal{A}_G)$ and the model $A^*(\mathcal{A}_G)=A^*(L_G,\hat{\mathcal{C}}(\mathcal{A}_G))$ associated with arrangement $\mathcal{A}_G$.

\begin{thm}\label{SS of chromatic}
	$H^*(F(M,G))$ is isomorphic to  $H^*(Tot(A^*(\mathcal{A}_G)))$ as algebras. In particular, there is a spectral sequence with  $$E_1^{-i,j}=A^*(L_G,H^{j}(\mathcal{A}_G))_{-i} \text{ with } d_1=\partial, $$
	which converges to $H^*(F(M,G))$ as algebras.
\end{thm}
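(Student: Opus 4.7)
The plan is to reduce this statement directly to our general machinery developed in Sections 3--5, once we identify $F(M,G)$ with the complement $\mathcal{M}(\mathcal{A}_G)$ of the manifold arrangement $\mathcal{A}_G$ inside $M^n$. This identification is essentially tautological from the definitions: a point $x=(x_1,\ldots,x_n)\in M^n$ lies in $\mathcal{M}(\mathcal{A}_G)=M^n-\bigcup_{a\in \text{Atom}(L_G)}\Delta_a$ if and only if $x\notin \Delta_a$ for every atom $a$, and the atoms of $L_G$ correspond exactly to the edges of $G$ (the partition whose only non-singleton block is $\{i,j\}$ for $(i,j)\in E(G)$). Thus $x\in \mathcal{M}(\mathcal{A}_G)$ precisely when $x_i\neq x_j$ for every edge $(i,j)\in E(G)$, i.e., $x\in F(M,G)$.

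Once this identification is in hand, the first claim of the theorem is an immediate consequence of Theorem \ref{main alg} applied to the arrangement $\mathcal{A}_G$, which gives
\[
H^*(Tot(A^*(\mathcal{A}_G)))\cong H^*(\mathcal{M}(\mathcal{A}_G))=H^*(F(M,G))
\]
as algebras. The existence of the spectral sequence, with the described $E_1$-term and $d_1=\partial$, converging to $H^*(F(M,G))$ as algebras, is then the statement of Corollary \ref{SS alg} specialized to $\mathcal{A}=\mathcal{A}_G$ and $p=\mathbf{0}$, noting that the quasi-intersection poset is $L_G$ (which is a genuine geometric lattice, hence locally geometric, as established in Example \ref{ex graph}).

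The only thing that really needs to be verified to invoke this machinery is that $\mathcal{A}_G$ satisfies the standing assumptions of Section \ref{manifold arrangements}: namely that each $\Delta_a$ is a closed smooth submanifold, that $\mathcal{A}_G$ has the Bott clean intersection property, and that its quasi-intersection poset is locally geometric. All of these facts are precisely what was just proved in the theorem immediately preceding this statement, which identifies the intersection lattice as $L_G$ and verifies the local-diffeomorphism-to-a-subspace-arrangement condition at every point. There is no real obstacle here; the work has been front-loaded into the general theory, and the chromatic configuration space is simply the most natural example to which it applies.
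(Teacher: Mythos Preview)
Your proof is correct and follows exactly the same approach as the paper's own one-line proof (``Applying Theorem~\ref{main alg}''), only spelled out in more detail: you make explicit the identification $F(M,G)=\mathcal{M}(\mathcal{A}_G)$, invoke Theorem~\ref{main alg} and Corollary~\ref{SS alg}, and note that the standing hypotheses on $\mathcal{A}_G$ were verified in the preceding theorem.
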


\begin{proof}
	Applying theorem \ref{main alg}.
\end{proof}

Using a field as coefficients, the dimension of $E_1^{-i,j}$ term would be easy to calculate as follows: $$\dim E_1^{-i,j}=\sum_{p\text{ with }r(p)=i}\dim A^*(L_G)_p \dim H^{j-r(p)m}(\Delta_p)$$ by definition.  This formula will be more elegant if we consider a  polynomial with two variables
 $$P_{M,G}(s,t)=\sum_{i,j}\dim E_1^{-i,j}s^{-i}t^j.$$

\begin{lem}\label{2var poly}
	Let $M$ be a $m$-dimensional manifold without boundary, and $G$ be a simple graph with $n$ vertexes. Then
	$$P_{M,G}(s,t)=(-1)^ns^{-n}t^{mn}\chi_G(-P(M,t)st^{-m})$$ where $\chi_G$ is the chromatic polynomial of $G$, and  $P(M,t)=\sum_i \dim H^i(M)t^i$ is the Poincar\'{e} polynomial of $M$.
\end{lem}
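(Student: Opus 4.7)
\medskip

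\noindent\textbf{Proof proposal.} The plan is to reduce the identity to Rota's expression
$\chi_G(t)=\sum_{p\in L_G}\mu(\mathbf{0},p)t^{n-r(p)}$ recalled in Example~\ref{ex graph}, by computing the generating function $P_{M,G}(s,t)$ block by block in $L_G$ and then matching terms after the substitution $u=-P(M,t)\,s\,t^{-m}$.

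First I would compute the two factors appearing in $\dim E_1^{-i,j}$ separately. For the Orlik--Solomon factor, Proposition~\ref{OS-property}(5) gives
\[
\dim A^*(L_G)_p=(-1)^{r(p)}\mu(\mathbf{0},p).
\]
For the topological factor, I would observe that the quasi-layer $\Delta_p=\{x\in M^n\mid L_G(x)\ge p\}$ is, as a submanifold, diffeomorphic to $M^{|p|}=M^{n-r(p)}$, since points of $\Delta_p$ are in bijection with tuples indexed by the blocks of the partition $p$. By the Künneth formula the Poincaré polynomial of $\Delta_p$ is $P(M,t)^{n-r(p)}$, so
\[
\sum_{j}\dim H^{j-r(p)m}(\Delta_p)\,t^{j}=t^{r(p)m}P(M,t)^{n-r(p)}.
\]

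Next I would plug these into the given expression for $\dim E_1^{-i,j}$, grouping the sum over $(i,j)$ as a sum over $p\in L_G$:
\begin{align*}
P_{M,G}(s,t)
&=\sum_{p\in L_G}\dim A^*(L_G)_p\, s^{-r(p)}\sum_{j}\dim H^{j-r(p)m}(\Delta_p)\,t^{j}\\
&=\sum_{p\in L_G}(-1)^{r(p)}\mu(\mathbf{0},p)\,s^{-r(p)}t^{r(p)m}P(M,t)^{n-r(p)}.
\end{align*}
Now I would substitute $u=-P(M,t)\,s\,t^{-m}$ into Rota's formula and pull out a factor $(-1)^n s^{-n}t^{mn}$:
\begin{align*}
(-1)^{n}s^{-n}t^{mn}\chi_G(u)
&=(-1)^{n}s^{-n}t^{mn}\sum_{p}\mu(\mathbf{0},p)(-1)^{n-r(p)}P(M,t)^{n-r(p)}s^{n-r(p)}t^{-m(n-r(p))}\\
&=\sum_{p}\mu(\mathbf{0},p)(-1)^{r(p)}s^{-r(p)}t^{mr(p)}P(M,t)^{n-r(p)},
\end{align*}
using $(-1)^{2n-r(p)}=(-1)^{r(p)}$, which is precisely the expression obtained for $P_{M,G}(s,t)$ above.

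There is no genuine obstacle here: the only delicate point is the identification of $\Delta_p$ with $M^{n-r(p)}$, which is immediate from the definition of $L_G(x)$ and the bond-lattice description of $L_G$ (the blocks of $p$ index the independent coordinates), and the bookkeeping of the two shifts $s^{-r(p)}$ (from the negative grading on $A^*(L_G,\cdot)_{-i}$) and $t^{r(p)m}$ (from the degree shift $j\mapsto j-r(p)m$). Once these are in place the identity falls out directly from Rota's Möbius formula for $\chi_G$.
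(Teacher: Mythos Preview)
Your proposal is correct and follows essentially the same route as the paper's proof: both compute $\dim A^*(L_G)_p$ via the M\"obius function (Proposition~\ref{OS-property}(5)), identify the Poincar\'e polynomial of $\Delta_p\cong M^{n-r(p)}$ as $P(M,t)^{n-r(p)}$, and then match against Rota's formula $\chi_G(t)=\sum_{p}\mu(\mathbf{0},p)t^{n-r(p)}$. Your write-up is slightly more explicit about the final substitution step, but the argument is the same.
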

\begin{proof}
	 It is well-known that $\dim(A^*(L_G)_p)=(-1)^{r(p)}\mu(0,p)$ where $\mu$ is the M\"{o}bius function on $L_G$ and notice that $\dim H^j(M^n,M^n-\Delta_p)= \dim H^{j-r(p)m}(\Delta_p)$ by Thom isomorphism. Then we have
	 \begin{align*}
	 P_{M,G}(s,t)&=\sum_{i,j}\dim E_1^{-i,j}s^{-i}t^j\\
	 &=\sum_{p\in L_G,j}\dim A^*(L_G)_p\dim H^{j-r(p)m}(\Delta_p)s^{-r(p)}t^j\\
	 &=\sum_{p\in L_G}(-1)^{r(p)}\mu(0,p)s^{-r(p)}P(M,t)^{n-r(p)}t^{r(p)m}.
	 \end{align*}
	  Moreover, the  required equation follows from  the following known result for the chromatic polynomial  $$\chi_G(t)=\sum_{p\in L_G} \mu(0,p)t^{n-r(p)}.$$
\end{proof}

\begin{rem}
	The "Deletion--contraction" formula $\chi_G=\chi_{G-e}-\chi_{G/e}$ of chromatic polynomial induces a "Deletion--contraction" formula of $P_{M,G}$. It is easy to check that $P_{M,G}=P_{M,G-e}+s^{-1}t^mP_{M,G/e}$.
\end{rem}
\begin{rem}
	We can also consider the 2-variable polynomial of $E_\infty$ term for every manifold arrangements $\mathcal{A}$, i.e., $P_{\infty,\mathcal{A}}(s,t)=\sum_{i,j}\dim(E_\infty^{-i,j})s^{-i}t^j$, which is hard to calculate in general. This polynomial is a natural invariant of $\mathcal{A}$ that contains much more information than the Poincar\'{e} polynomial of $\mathcal{M}(\mathcal{A})$.
\end{rem}

\subsection{More explicit result about $H^*(F(M,G))$}\label{explicit result}
For some special case, the spectral sequence in Section 4 is simpler.
\vskip .2cm
The first case is that $M$ be a complex projective smooth variety. Combine Corollary \ref{alg arg}, Theorem \ref{ring} and last subsection, we have
\begin{cor}\label{explicit chro alg}
	Assume $M$ is a complex projective smooth variety and we use $\mathbb{Q}$ as coefficients. Then the mix Hodge structure of $H^*(F(M,G))$ satisfies
	$$Gr_{k+i}^WH^k(F(M,G)) \cong H^{-i}(A^*(L_G,H^{k+i}(\mathcal{A}_G)),\partial)$$
	and there is the following isomorphism
	$$H^*(F(M,G)) \cong H^{-*}(A^*(L_G,H^{*}(\mathcal{A}_G)),\partial)$$ as algebras.
\end{cor}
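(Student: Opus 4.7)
The plan is to recognize that this corollary is essentially a direct specialization of the two main results already established (Corollary \ref{alg arg} for the graded pieces with mixed Hodge structure, and Theorem \ref{ring} for the algebra isomorphism), applied to the specific manifold arrangement $\mathcal{A}_G$ inside the ambient variety $M^n$. So the proof will consist of two short verification steps followed by two invocations.

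First I will invoke the identification from the beginning of this section: $F(M,G) = \mathcal{M}(\mathcal{A}_G)$, where $\mathcal{A}_G$ is the manifold arrangement in $M^n$ whose quasi-intersection poset is $L_G$ and whose quasi-layers are $(M^n)_p = \Delta_p$. Next I will verify the algebro-geometric hypotheses needed by Corollary \ref{alg arg} and Theorem \ref{ring}: since $M$ is a smooth complex projective variety, so is $M^n$; and for every atom $a \in L_G$, corresponding to a single edge $(i,j) \in E(G)$, the diagonal $\Delta_a = \{(x_1,\dots,x_n) \in M^n : x_i = x_j\}$ is isomorphic to $M^{n-1}$ via the obvious projection, hence a smooth closed subvariety of $M^n$. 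Thus all elements of $\mathcal{A}_G$ are smooth subvarieties of the smooth projective variety $M^n$, as required.

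With the hypotheses in place, the first isomorphism follows from Corollary \ref{alg arg} applied to $(M^n, \mathcal{A}_G, L_G)$: re-indexing the weight $n$ of that corollary as $k+i$ and its cohomological degree $n-i$ as $k$, we get
$$Gr_{k+i}^W H^k(F(M,G)) \;\cong\; H^{-i}\bigl(A^*(L_G,H^{k+i}(\mathcal{A}_G)),\partial\bigr).$$
The second (algebra) isomorphism is then just Theorem \ref{ring} applied to $\mathcal{A}_G$:
$$H^*(F(M,G)) \;=\; H^*(\mathcal{M}(\mathcal{A}_G)) \;\cong\; H^{-*}\bigl(A^*(L_G,H^*(\mathcal{A}_G)),\partial\bigr)$$
as algebras.

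There is no real obstacle here, since all the heavy lifting (constructing the mixed Hodge complex model $Tot(A^*(\mathfrak{L},\mathcal{K}(\mathcal{A})))$, comparing the column-wise and weight filtrations, showing degeneration at $E_2$, and passing from graded pieces back to the algebra via Deligne splitting) has already been done in Sections 4 and 5. The only point that requires any attention is the re-indexing of the weight/cohomological-degree convention of Corollary \ref{alg arg}, which is purely bookkeeping. If any mild subtlety arises it would be in double-checking that the quasi-intersection poset chosen for $\mathcal{A}_G$ (namely all of $L_G$, together with $\Delta_p$ as the associated quasi-layer) is indeed locally geometric and that $\Delta_p$ is connected (or that our theory tolerates the disjoint-union quasi-layer structure), but this was already handled in the earlier subsection defining $\mathcal{A}_G$.
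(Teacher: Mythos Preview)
Your proposal is correct and follows exactly the paper's own approach: the paper simply says ``Combine Corollary~\ref{alg arg}, Theorem~\ref{ring} and last subsection'' before stating this corollary, which is precisely what you do (with the added care of explicitly checking that $M^n$ is smooth projective and each $\Delta_a$ is a smooth subvariety). Your re-indexing remark and the observation about $L_G$ being the quasi-intersection poset are accurate and match the setup of Section~\ref{chromatic space}.
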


\vskip .2cm
For a general smooth manifold $M$, the spectral sequence in Theorem \ref{SS of chromatic} may have non-trivial higher differential. This spectral sequence will be simple if the diagonal cohomology class of $M^2$ is zero.
Recall (see Milnor's book \cite{Milnor1974}) that the diagonal cohomology class of $M^2$ is the image of the Thom class under the map $H^*(M^2,M^2-\Delta(M^2))\rightarrow H^*(M^2)$, where $\Delta(M^2)$ is the diagonal of $M^2$.  From now on, we always use
$\mathbb{Z}_2$-coefficients, and assume that the diagonal cohomology class of $M^2$ vanishes. This is  the case  if $M=M'\times \mathbb{R}$ where $M'$ is another manifold.

\vskip .2cm
Under the above assumption, the monoidal presheaf $H^*(\mathcal{A}_G)$ becomes simpler, and it is completely only determined by $G$ and $H^*(M)$. Firstly, observe that $H^*(\mathcal{A}_G)_p=H^*(M^n,M^n-\Delta_p)=H^*(\Delta_p)[-mr(p)]$ (let $\dim(M)=m$) by Thom isomorphism, so every element of  $H^i(\mathcal{A}_G)_p$ can be represented by an element $x_p \in H^{i-mr(p)}(\Delta_p)$.

\begin{thm}\label{str of presheaf}
	 Assume that the diagonal cohomology class of $M^2$ is zero and we use
	 $\mathbb{Z}_2$ as coefficients and $G$ is a simple graph with  vertex set $[n]$. Let $E^*(M,G)$ be the exterior algebra over ring $H^*(M)^{\otimes n}$, generated by elements $e_{ij}$ where $(ij)$ is edge of $G$. The $E_1$ page $A^*(L_G,H^*(\mathcal{A}_G))$ has zero differential ($\partial=0$) and is isomorphic to $E^*(M,G)/I$ as algebras where $I$ is the ideal generated by elements  \begin{align}
	 (1^{i-1}\otimes x \otimes 1^{n-i-1}-1^{j-1}\otimes x \otimes 1^{n-j-1})e_{ij} \tag{1}\\
	 \sum_s e_{i_1i_2}\ldots \widehat{e_{i_si_{s+1}}}\ldots e_{i_ki_1} \tag{2}
	 \end{align}
	
for all edges $(i,j)$ and all cycles $(i_1i_2,i_2i_3,\ldots,i_ki_1)$.
\end{thm}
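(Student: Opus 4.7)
The plan is to treat the statement as two separate assertions---vanishing of the OS differential $\partial$, and an algebra isomorphism with $E^*(M,G)/I$---and to prove each by explicit construction. I will build an algebra map $\Phi: E^*(M,G) \to A^*(L_G, H^*(\mathcal{A}_G))$ on generators, verify that the relations (1) and (2) hold in the image, and then compare bases indexed by the partitions $p \in L_G$.

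For $\partial = 0$, I would use Thom isomorphism to rewrite $H^*(\mathcal{A}_G)_p$ as a shift of $H^*(\Delta_p)$. Since $p$ covering $p_i$ means $\Delta_p$ sits inside $\Delta_{p_i}$ as a codimension $m$ diagonal-type submanifold (collapsing one extra pair of coordinates), the structure map $f_{p, p_i}$ is identified with the Gysin pushforward $\iota_*: H^*(\Delta_p) \to H^{*+m}(\Delta_{p_i})$ of the inclusion $\iota: \Delta_p \hookrightarrow \Delta_{p_i}$. K\"unneth makes $\iota^*$ surjective, and the projection formula yields $\iota_*(y) = \hat{y} \cup [\Delta_p]$ for any lift $\hat{y}$ with $\iota^* \hat{y} = y$. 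The class $[\Delta_p]$ in $H^m(\Delta_{p_i})$ is, up to an extra product factor, the diagonal class of $M^2$, which vanishes by hypothesis; hence each $f_{p, p_i}$ is zero, forcing $\partial = 0$ on $A^*(L_G, H^*(\mathcal{A}_G))$.

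Next I define $\Phi$ by sending $H^*(M)^{\otimes n} = H^*(M^n)$ identically into $A^*([0, \mathbf{0}])_\mathbf{0} \otimes H^*(\mathcal{A}_G)_\mathbf{0}$, and each generator $e_{ij}$ to $e_{ij} \otimes \tau_{ij}$, where $\tau_{ij}$ is the Thom class of $\Delta_{\{ij\}}$ viewed in $H^*(\mathcal{A}_G)_{\{ij\}}$. Extending multiplicatively via Definition \ref{OS-alg with coef} (and using that $p \mathring{\vee} q = \{p \vee q\}$ in a lattice) is well-defined on the free exterior algebra. For relation (1), the cup product with $\tau_{ij}$ implements restriction to $\Delta_{\{ij\}}$, on which coordinates $i$ and $j$ coincide; so a class $x$ placed at position $i$ and the same class placed at position $j$ restrict identically to $\Delta_{\{ij\}}$, killing their difference. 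For relation (2), the atoms of a cycle of length $k$ span a partition of rank $k - 1$ in $L_G$ and are thus dependent, so the Orlik--Solomon relation $\partial(e_{i_1 i_2} \cdots e_{i_k i_1}) = 0$ produces exactly the cyclic sum in $A^*(L_G)$, and hence in the image of $\Phi$.

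It remains to show $\overline{\Phi}: E^*(M,G)/I \to A^*(L_G, H^*(\mathcal{A}_G))$ is bijective. I would decompose both sides by $p \in L_G$: the target $p$-summand is $A^*([0,p])_p \otimes H^*(M)^{\otimes |p|}$, with $A^*([0,p])_p$ carrying a standard no-broken-circuit basis indexed by spanning forests of $G|p$. On the source side, I use relation (1) inductively to transport every cohomology factor along the edges of a chosen spanning forest of $G|p$ onto a representative vertex of each block of $p$, and relation (2) in NBC form to reduce edge products to the same spanning-forest basis. These canonical monomials match bijectively with the basis on the target, and $\overline{\Phi}$ sends one to the other. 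The hard part will be verifying completeness of the relations, namely that (1) and (2) actually generate the full kernel of $\Phi$---that the ``broken-circuit'' and ``transport'' reductions described above suffice to place every monomial into canonical form modulo $I$. This is the combinatorial dimension-matching step that is the technical heart of the claim and presumably where the appendix concentrates its work.
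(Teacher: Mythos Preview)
Your proposal is correct and follows essentially the same route as the paper: define an algebra map on generators $e_{ij}\mapsto e_{ij}\otimes\tau_{ij}$, verify relations (1) and (2), and match bases (the paper phrases the last step as a dimension count rather than an explicit NBC construction, but these amount to the same thing); likewise your Gysin/projection-formula argument for $\partial=0$ is a repackaging of the paper's computation $f^*_{p,q}(u_p)=u_q\cup f^*_{s,0}(u_s)=0$. The one point you gloss over that the paper makes explicit is the multiplicativity of Thom classes $\tau_{a_1}\cup\cdots\cup\tau_{a_k}=\tau_{a_1\vee\cdots\vee a_k}$ for independent atoms (proved there via a fiberwise local computation, Lemma~\ref{fundamental}): you need this both so that all terms in $\Phi(\text{relation (2)})$ share the same coefficient $\tau_p$, allowing the OS-part to cancel, and so that your spanning-forest monomials actually hit the advertised basis elements $e_S\otimes\tau_p\cdot y$ on the target side.
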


The proof will be completed in appendix.

\begin{rem}
	These elements in (1) of Theorem~\ref{str of presheaf} also appeared in Cohen and Taylor's model when $G$ is complete graph, see \cite[page 111]{Cohen1978}. Elements in (2) of Theorem~\ref{str of presheaf} appeared in the definition of OS-algebra.
Of course, there also exists a similar result for the structure of the monoidal  presheaf $H^*(\mathcal{A}_G)$ with $\mathbb{Z}$ coefficients if we consider the orientation of the Thom class carefully. Here we use $\mathbb{Z}_2$ as coefficients only for a simplicity since it is  enough to illustrate the application of our approach in this section.
\end{rem}

\begin{cor}\label{explicit}
	Assume that the diagonal cohomology class of $M^2$ is zero and  using
	$\mathbb{Z}_2$ coefficients. Let $G$ be a simple graph with  vertex set $[n]$. Then  there exists a filtration of $H^*(F(M,G))$ such that $Gr(H^*(F(M,G)))$ is isomorphic to $A^*(L_G,H^*(\mathcal{A}_G))$ as algebras.
\end{cor}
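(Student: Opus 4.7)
The plan is to apply the multiplicative spectral sequence from Corollary~\ref{SS alg} (specialized as Theorem~\ref{SS of chromatic}) whose $E_1$ page is $A^*(L_G,H^*(\mathcal{A}_G))$ with $d_1=\partial$, converging to $H^*(F(M,G))$ as algebras. The associated filtration is the column-wise filtration $\tau^{-*}$, so it suffices to show that this spectral sequence degenerates at $E_2$ and then identify $E_\infty$ with $A^*(L_G,H^*(\mathcal{A}_G))$ as algebras.

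First, since the diagonal class of $M^2$ is zero, Theorem~\ref{str of presheaf} tells us that $\partial=0$ on $A^*(L_G,H^*(\mathcal{A}_G))$. Hence $E_2=E_1=A^*(L_G,H^*(\mathcal{A}_G))$ as algebras. Second, I would read off from Theorem~\ref{str of presheaf} the bidegrees of a set of algebra generators: elements of $H^*(M)^{\otimes n}$ live in $E_1^{0,\ast}$ (the summand where $r(p)=0$), while each edge generator $e_{ij}$ lives in $E_1^{-1,m}$ (it comes from an atom $p$ of $L_G$ with $r(p)=1$ and corresponds under the Thom isomorphism to $1\in H^0(\Delta_p)$, so $j=m$). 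Thus the algebra $H^{-\ast}(A^*(L_G,H^\ast(\mathcal{A}_G)),\partial)=A^*(L_G,H^*(\mathcal{A}_G))$ is generated in columns $-i=0$ and $-i=-1$.

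Third, I would invoke Theorem~\ref{DG1} (with $p=\mathbf{0}$) to conclude that the spectral sequence collapses at $E_2$. To make this concrete, note that $d_r$ has bidegree $(r,-r+1)$ and $E_r^{-i,j}=0$ whenever $-i>0$. On a generator at $(0,\ast)$, $d_r$ would land in column $r\geq 2$, hence vanishes; on a generator $e_{ij}$ at $(-1,m)$, $d_r$ would land in column $r-1\geq 1$ for $r\geq 2$, hence vanishes. By the Leibniz rule (the spectral sequence is multiplicative by Corollary~\ref{SS1}), $d_r=0$ on all of $E_r$ for $r\geq 2$, and an induction shows $E_r=E_2$ remains generated in the same columns at every stage. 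Therefore $E_\infty\cong A^*(L_G,H^*(\mathcal{A}_G))$ as bigraded algebras, and $Gr^\tau H^*(F(M,G))\cong A^*(L_G,H^*(\mathcal{A}_G))$ as algebras, yielding the claim with the filtration $\tau^{-\ast}$ on $H^*(F(M,G))$.

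The main obstacle is not in the spectral-sequence manipulation itself, which is routine once the generator bidegrees are known, but in correctly reading off those bidegrees from the explicit presentation of $A^*(L_G,H^*(\mathcal{A}_G))$ given in Theorem~\ref{str of presheaf}. In particular, one must verify that the relations (1) and (2) in Theorem~\ref{str of presheaf} are homogeneous with respect to the bigrading $(-r(p),j)$, so that the quotient algebra is genuinely generated in columns $0$ and $-1$; this is a straightforward check using that relation (1) lives in bidegree $(-1,\deg(x)+m)$ and relation (2) lives in bidegree $(-k,km)$, both of which are automatically in the subalgebra generated by $H^*(M)^{\otimes n}$ and the $e_{ij}$.
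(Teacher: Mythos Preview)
Your proposal is correct and follows essentially the same route as the paper: invoke Theorem~\ref{str of presheaf} to see that $\partial=0$ so $E_2=E_1=A^*(L_G,H^*(\mathcal{A}_G))$, observe that this algebra is generated in columns $0$ and $-1$, and then apply the degeneration criterion Theorem~\ref{DG1}. Your additional remarks on the explicit bidegrees of the generators $e_{ij}$ and on the homogeneity of the relations are more detailed than what the paper writes, but they simply spell out why the ``obvious by Theorem~\ref{str of presheaf}'' step is indeed obvious.
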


\begin{proof}
	The differential $\partial$ of the global OS-algebra $A^*(L_G,H^*(\mathcal{A}_G))$ is zero by Theorem \ref{str of presheaf}, so
	$$E_2^{-i,j}=E_1^{-i,j}=A^*(L_G,H^{j}(\mathcal{A}_G))_{-i}.$$
	Now we only need to show that this algebra is generated by elements of $E_2^{-1,j}$ and $E_2^{0,j}$,  by making use of the degeneration condition in Theorem \ref{DG1}. This is obvious by Theorem \ref{str of presheaf}.
\end{proof}

\begin{example}
	Assume that the diagonal cohomology class of $M^2$ is zero and using
	$\mathbb{Z}_2$ coefficients. Let $G$ be the cycle graph $C_n$ of length $n$, then $Gr(H^*(F(M,G)))$ is isomorphic to the exterior algebra over ring $H^*(M)^{\otimes n}$, generated by $e_1,e_2,\ldots,e_n$, with relations
	$$(1^{i-1} \otimes x \otimes 1^{n-i} )e_i=(1^{i} \otimes x \otimes 1^{n-i-1}) e_i \text{ for } i<n$$ $$(1^{n-1} \otimes x)e_n=( x \otimes 1^{n-1}) e_n$$ $$\sum_s e_{1}\ldots \widehat{e_{s}}\ldots e_{n}=0.$$
\end{example}

Consider the Poincar\'{e} polynomial, we have
\begin{cor} With the same assumption as in Corollary  \ref{explicit}.  Then
	$$P(F(M,G))=(-1)^nt^{n(m-1)}\chi_G(-P(M)t^{1-m})$$
 where $P(-)$ denote the Poincar\'{e} polynomial of $\mathbb{Z}_2$-cohomology with a variable $t$,  $\dim M=m$, and $\chi_G(t)$ is the chromatic polynomial of $G$.
\end{cor}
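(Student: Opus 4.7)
\medskip

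\noindent\textbf{Proof proposal.} The plan is to show that the Poincar\'e polynomial of $F(M,G)$ is simply the specialization $P_{M,G}(t,t)$ of the two-variable polynomial computed in Lemma~\ref{2var poly}, and then substitute $s=t$ into the explicit formula already proved there. The work is essentially bookkeeping of gradings, because all of the needed collapse and identification of the $E_1$-page has already been established.

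First I would recall that, by the negative-grading convention on $A^*(\mathfrak{L},\mathcal{C})$ together with the bidegree of the total differential $\partial+\delta$, an element of $E_1^{-i,j} = A^*(L_G,H^j(\mathcal{A}_G))_{-i}$ contributes to the cohomology class of total degree $j-i$. Thus once the spectral sequence of Theorem~\ref{SS of chromatic} collapses, we have
\[
\dim H^k(F(M,G)) \;=\; \sum_{j-i=k}\dim E_\infty^{-i,j}.
\]
Under the standing hypothesis on the diagonal class of $M^2$, Theorem~\ref{str of presheaf} asserts that the differential $\partial$ on the $E_1$-page vanishes, so $E_2 = E_1$; and the degeneration at $E_2$ was already established in the proof of Corollary~\ref{explicit} via Theorem~\ref{DG1}. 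Hence $E_\infty^{-i,j} \cong E_1^{-i,j}$ as $\mathbb{Z}_2$-vector spaces, and the displayed identity holds with $E_\infty$ replaced by $E_1$.

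Next I would translate this into a statement about the two-variable polynomial of Lemma~\ref{2var poly}. By definition
\[
P_{M,G}(s,t) \;=\; \sum_{i,j}\dim E_1^{-i,j}\,s^{-i}t^j,
\]
so setting $s=t$ collapses the monomial $s^{-i}t^j$ to $t^{j-i}$, giving precisely
\[
P(F(M,G))(t) \;=\; \sum_{k}\Bigl(\sum_{j-i=k}\dim E_1^{-i,j}\Bigr)t^k \;=\; P_{M,G}(t,t).
\]
Finally I would substitute $s=t$ into the formula of Lemma~\ref{2var poly}, obtaining
\[
P(F(M,G))(t) \;=\; (-1)^n t^{-n}t^{mn}\,\chi_G\bigl(-P(M,t)\cdot t\cdot t^{-m}\bigr) \;=\; (-1)^n t^{n(m-1)}\,\chi_G\bigl(-P(M,t)\,t^{1-m}\bigr),
\]
which is the desired identity.

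The argument has no real obstacle; the only point requiring care is confirming that the total degree on $Tot(A^*(\mathcal{A}_G))$ matches the cohomological degree on $H^*(F(M,G))$ via the isomorphism of Theorem~\ref{main alg}, so that summing over the antidiagonal $j-i=k$ on $E_\infty$ really yields $\dim H^k(F(M,G))$. Once this compatibility of gradings is in hand, the corollary is just a substitution into the chromatic-polynomial identity already proved.
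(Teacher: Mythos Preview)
Your proposal is correct and follows essentially the same approach as the paper's own proof: both observe that the spectral sequence collapses at $E_1$ (via Theorem~\ref{str of presheaf} and the degeneration argument in Corollary~\ref{explicit}), so that $P(F(M,G)) = P_{M,G}(t,t)$, and then substitute $s=t$ into the formula of Lemma~\ref{2var poly}. Your version simply spells out the grading bookkeeping more explicitly than the paper does.
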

\begin{proof}
	In the proof of Corollary \ref{explicit}, the spectral sequence collapses on $E_1$ term, so the Poincar\'{e} polynomial of $F(M,G)$ equals $P_{M,G}(t,t)$, where $P_{M,G}$ is the two-variable polynomial of $E_1$ term which is defined in Lemma \ref{2var poly}. The required equation is a direct result of Lemma \ref{2var poly}.
\end{proof}
\begin{rem}
	With the same assumption of the above Corollary, $P(F(M,G))$ satisfies a similar "Deletion--contraction" formula  $$P(F(M,G))=P(F(M,G-e))+t^{m-1}P(F(M,G/e))$$ by the "Deletion--contraction" formula of chromatic polynomial.
\end{rem}

In a more special condition, we can overcome the gap between $Gr(H^*(F(M,G)))$ and $H^*(F(M,G))$ by a simple method of "counting degree".

\begin{thm}\label{alg}
	Using
	$\mathbb{Z}_2$ coefficients, assume $\dim M=m$ and the diagonal cohomology class of $M^2$ is zero and $H^i(M)=0$ for all $i\geq (m-1)/2$. Then $H^*(F(M,G))$ is isomorphic to $A^*(L_G,H^*(\mathcal{A}_G))$ as algebras.
\end{thm}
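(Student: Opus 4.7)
\emph{Proof proposal.}
My plan is to lift the graded algebra isomorphism from Corollary~\ref{explicit} to an honest algebra isomorphism $\Phi\colon A^*(L_G,H^*(\mathcal{A}_G))\to H^*(F(M,G))$ by defining $\Phi$ on the generators of the presentation in Theorem~\ref{str of presheaf}. Let $\Phi$ be the pullback along the inclusion $F(M,G)\hookrightarrow M^n$ on $H^*(M)^{\otimes n}=H^*(M^n)$, and for each edge $(ij)\in E(G)$ set $\Phi(e_{ij})=\pi_{ij}^{*}\tilde{e}_{12}$, where $\pi_{ij}\colon F(M,G)\to M^2-\Delta_{12}$ is projection to the $(i,j)$-factors (well-defined because $(ij)\in E(G)$) and $\tilde{e}_{12}\in H^{m-1}(M^2-\Delta_{12})$ is the unique nonzero class: uniqueness follows from the Gysin splitting (valid because the diagonal class of $M^2$ vanishes) together with the vanishing $H^{m-1}(M^2)=0=H^{m-1}(M)$ forced by the degree hypothesis via Künneth. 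Once the defining relations are verified, $\Phi$ induces the graded isomorphism from Corollary~\ref{explicit} on $\mathrm{Gr}$; combined with the dimensional equality $\dim H^k(F(M,G))=\dim A^k$ from the same corollary, the filtered five-lemma then forces $\Phi$ to be an algebra isomorphism.

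For relation~(1) of Theorem~\ref{str of presheaf}, the product $(\pi_i^{*}x-\pi_j^{*}x)\Phi(e_{ij})$ equals $\pi_{ij}^{*}\bigl((p_1^{*}x-p_2^{*}x)\tilde{e}_{12}\bigr)$, so it suffices to show $(p_1^{*}x-p_2^{*}x)\tilde{e}_{12}=0$ in $H^{\deg(x)+m-1}(M^2-\Delta_{12})$. The connecting homomorphism $\delta$ of the pair $(M^2,M^2-\Delta_{12})$ is a module derivation over $H^*(M^2)$, so $\delta\bigl((p_1^{*}x-p_2^{*}x)\tilde{e}_{12}\bigr)=(p_1^{*}x-p_2^{*}x)|_{\Delta_{12}}\cdot\delta(\tilde{e}_{12})=0$, and hence the product lies in the image of $H^{\deg(x)+m-1}(M^2)\to H^{\deg(x)+m-1}(M^2-\Delta_{12})$. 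Since $\deg(x)+m-1\geq m-1$ and the Künneth formula combined with $H^i(M)=0$ for $i\geq(m-1)/2$ yields $H^d(M^2)=0$ for every $d\geq m-1$, this image vanishes. The exterior algebra relation $\Phi(e_{ij})^2=0$ follows by the same Künneth argument applied to $H^{2m-2}(M^2-\Delta_{12})=H^{2m-2}(M^2)\oplus H^{m-1}(\Delta_{12})=0$.

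For relation~(2) I reduce to a sub-arrangement in $M^k$. Given a cycle $(i_1i_2,i_2i_3,\ldots,i_ki_1)$ in $G$, the projection $\pi_{i_1\cdots i_k}\colon F(M,G)\to F(M^k,C_k)$, where $C_k$ is the cycle graph on $\{1,\ldots,k\}$, is well-defined because every cycle edge lies in $E(G)$, and it pulls back each edge class $\tilde{e}^{(k)}_{s,s+1}\in H^{m-1}(F(M^k,C_k))$ to $\Phi(e_{i_si_{s+1}})$. It suffices to show that the lifted cycle sum $R=\sum_s\tilde{e}^{(k)}_{1,2}\cdots\widehat{\tilde{e}^{(k)}_{s,s+1}}\cdots\tilde{e}^{(k)}_{k,1}$ vanishes in $H^{(k-1)(m-1)}(F(M^k,C_k))$. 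Applying Corollary~\ref{explicit} in $F(M^k,C_k)$, the element $R$ lies a priori in $\tau^{-(k-1)}$ and its $\mathrm{Gr}^{-(k-1)}$-image vanishes by the relation in $A^*(L_{C_k},H^*(\mathcal{A}_{C_k}))$, so $R\in\tau^{-(k-2)}$. The degree count with $n=k$ then forces $R=0$: a nonzero contribution at filtration level $-i'$ and total degree $(k-1)(m-1)$ requires $(m-1)i'\leq(k-1)(m-1)<(m-1)(k+i')/2$, i.e.\ $k-2<i'\leq k-1$, so $\tau^{-(k-2)}H^{(k-1)(m-1)}(F(M^k,C_k))=0$. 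Pulling back by $\pi_{i_1\cdots i_k}^{*}$ then yields the cycle relation in $H^*(F(M,G))$.

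The crucial step is the reduction to sub-arrangements in relation~(2): if one worked directly in $F(M,G)$, the analogous range $(2(k-1)-n,k-1]$ of filtration levels at the cycle's total degree contains more than one integer whenever $k<n$, leaving a potential extension problem. Reducing to $F(M^k,C_k)$ via $\pi_{i_1\cdots i_k}$ replaces $n$ by $k$ and collapses this range to $\{k-1\}$; this is how the degree hypothesis (through the bounds $(m-1)i\leq d<(m-1)(n+i)/2$ governing the nonzero filtration range) resolves the extension problem invisible at the level of graded algebras.
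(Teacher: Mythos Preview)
Your approach is genuinely different from the paper's. The paper takes the module isomorphism $\eta$ coming from the degenerate spectral sequence (Corollary~\ref{explicit}) and proves it is multiplicative by induction on $|E(G)|$: for $x_p\in A^*_p$, $x_q\in A^*_q$ with $p\vee q<\mathbf{1}$ it passes to the sub-arrangement $\mathcal{A}_{G|p\vee q}$ via Proposition~\ref{sub arrangement} and invokes the inductive hypothesis, while for $p\vee q=\mathbf{1}$ it writes $\eta(x_px_q)=\eta(x_p)\cup\eta(x_q)+\sum\eta(x_{p_i})$ with $r(p_i)<r(\mathbf{1})$ and uses the degree bound to kill each $x_{p_i}$. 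You instead build an explicit algebra map $\Phi$ on generators and aim to show it is a filtered isomorphism; your relation checks replace the paper's uniform induction by direct Gysin/K\"unneth computations for relation~(1) and a reduction to the cycle graph $C_k$ for relation~(2). Your route is more constructive --- it names the classes --- at the cost of separate ad~hoc verifications.

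There is, however, one genuine gap. You assert that ``$\Phi$ induces the graded isomorphism from Corollary~\ref{explicit} on $\mathrm{Gr}$'', but this is precisely the point that requires work: you must show that $\Phi(e_{ij})=\pi_{ij}^{*}\tilde e_{12}$ has image $e_{ij}\otimes u_p$ in $\mathrm{Gr}^{-1}H^{m-1}(F(M,G))\cong E_\infty^{-1,m}$ under the spectral-sequence identification, and nowhere do you describe that identification. The same gap reappears in your relation-(2) check, where the claim that the $\mathrm{Gr}^{-(k-1)}$-image of $R$ vanishes presupposes that each $\tilde e^{(k)}_{s,s+1}$ represents $e_{s,s+1}\otimes u$ in $\mathrm{Gr}^{-1}$. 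The gap is fillable: work first in the single-diagonal sub-arrangement $M^n-\Delta_p$, where a direct inspection of the two-step total complex shows that the edge isomorphism $E_\infty^{-1,m}\to \tau^{-1}H^{m-1}(M^n-\Delta_p)/\tau^0$ is inverse to the connecting homomorphism of the pair $(M^n,M^n-\Delta_p)$; hence any $\delta$-preimage of $u_p$ --- in particular $\pi_{ij}^{*}\tilde e_{12}$ --- represents $e_p\otimes u_p$ modulo $\tau^0$. Proposition~\ref{sub arrangement} then transports this identification to $F(M,G)$. Once this step is supplied, your argument goes through.
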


\begin{proof}
	Firstly, observe that if $G$ is a disjoint union of some connected graphs $G_i$, then $F(M,G)=\prod F(M,G_i)$ and $A^*(\mathcal{A}_G)=\bigoplus A^*(\mathcal{A}_{G_i})$, so it suffices to prove the case that $G$ is connected.

\vskip .2cm
	
	We perform our work by induction on the number of edges. If $G$ has no edge, then the result is obvious. Now we assume that  the result is true for all connected $G$ that $|E(G)|<s$. Consider the case in which $G$ is connected with $|E(G)|=s$.

\vskip .2cm	
	Corollary \ref{explicit} says that there is an isomorphism $\eta: A^*(L_G,H^*(\mathcal{A}_G))\rightarrow H^*(F(M,G))$ as modules, so we need to show this is also an isomorphism as algebras.

\vskip .2cm	
	Given two elements $x_p \in A^*(L_G,H^*(\mathcal{A}_G))_p$ and $x_q \in A^*(L_G,H^*(\mathcal{A}_G))_q$, where $p,q\in L_G$ with $r(p)=i$ and $r(q)=j$.
	If $p\vee q<\mathbf{1}$,  since we have assumed $G$ is connected, this means that $p\vee q$ must have two or more components, so the subgraph~\footnote{For definition of subgraph $G|p\vee q$, see Example \ref{ex graph}} $G|p\vee q$ must have less edges than $G$. Consider the arrangement $\mathcal{A}_{G|p\vee q}$ associated with the space $F(M,G|p\vee q)$, we know that this arrangement is a sub-arrangement of $\mathcal{A}_G$ as discussed in Section \ref{inc of sub}. Then Theorem \ref{sub arrangement} and Corollary \ref{explicit} give us a commutative diagram	
	\[
	\begin{CD}
	H^*(F(M,G|p\vee q)) @>i^*>> H^*(F(M,G))\\
	@A\theta AA @A\eta AA\\
	A^*([0,p\vee q],H^*(\mathcal{A}_{G|p\vee q})) @>j^*>> A^*(L_G, H^*(\mathcal{A}_G))
	\end{CD}
	\]
where $\eta,\theta$ are two module-isomorphisms (note that $\theta$ is also an algebra-isomorphism by induction hypothesis),  $j^*$ is a natural inclusion of algebra, and $i^*$ is induced by the inclusion $F(M,G)\hookrightarrow F(M,G|p\vee q)$.
It is easy to see that those two elements $x_p$, $x_q$ and their product are located in sub-algebra $A^*([0,p\vee q], H^*(\mathcal{A}_{G|p\vee q}))$ since $\theta, i^*$ preserve product, so $\eta$ is also compatible with the    product of those two elements by the diagram.
	
\vskip .2cm	
	
	Now assume that $p\vee q=\mathbf{1}$. We know that $\eta$ induces an algebra-isomorphism: $A^*(L_G,H^*(\mathcal{A}_G))\rightarrow Gr(H^*(F(M,G)))$. Then we can write the image of the product as $$\eta(x_px_q)= \eta(x_p)\cup \eta(x_q)+\sum_i \eta (x_{p_i})$$ for some $x_{p_i} \in A^*(L_G,H^*(\mathcal{A}_G))_{p_i}, p_i<\mathbf{1}$.
	We see that $r(p)+r(q)\geq r(\mathbf{1})=n-1$ since $G$ is connected, $\eta(x_px_q)$ should have degree at least $(n-1)(m-1)$ in $H^*(F(M,G))$.

\vskip .2cm
	
	Let us observe the degree of elements $x_{p_i}$ with  $p_i<\mathbf{1}$. Write $k=r(p_i)$ and $x_{p_i}=a_{p_i}\otimes c_{p_i}$ where $a_{p_i}\in A^*(L_G)_{p_i}$ and  $c_{p_i}\in H^*(\mathcal{A}_G)_{p_i}=H^*(M^n,M^n-\Delta_{p_i})$. Then $a_{p_i}$ has degree $-k$, and  $c_{p_i}$ equals the product of a Thom class $u_{p_i}$ with $\deg u_{p_i}=mk$ and  some nonzero elements of  degree less than $(n-k)(m-1)/2$ by the assumption of this theorem. Thus, $\deg(a_{p_i}\otimes c_{p_i})< (m-1)k+(n-k)(m-1)/2=(m-1)(n+k)/2$, which implies that $k\leq n-2$, so $\deg x_{p_i}<(m-1)(n-1)$, a contradiction of $\deg\eta(x_px_q)\geq (n-1)(m-1)$. Then this forces these items $x_{p_i}$ to be zero. Therefore, $\eta(x_px_q)= \eta(x_p)\cup \eta(x_q)$.
\end{proof}

\begin{rem}
	If $M=\mathbb{R}^m$, then Theorem~\ref{alg} agrees with the classical result of $H^*(\mathbb{R}^m,n)$ given by F. Cohen.
In addition, Theorem~\ref{alg} also shows a "standard" process how to apply our main result: (i) Determine the structure of monoidal presheaf $H^*(\mathcal{A})$. (ii) Calculate the $E_2$-term  $H^{-j}(A^*(L,H^i(\mathcal{A})),\partial)$ and check the condition of degeneration (iii) Observe the gap between $H^*(\mathcal{M}(\mathcal{A}))$ and $Gr(H^*(\mathcal{M}(\mathcal{A})))$.
	Using this process, we can also easily reprove the classical result of Orlik-Solomon for complex hyperplane arrangements.
\end{rem}


\section{Appendix}
In this section, we review some known result of spectral sequence of filtrated differential algebra, see \cite{McCleary2000}.
\subsection{Spectral sequence of filtrated differential algebra}
Let $A=\bigoplus_p A^p$ be a graded module, and  $...\subset F^pA^{p+q}\subset F^{p-1}A^{p+q}\subset...$ be an decreasing filtration with the differential $d$ such that $d(F^pA^{p+q})\subset F^pA^{p+q+1}$. Define
$$Z_r^{p,q}= F^pA^{p+q}\cap d^{-1}(F^{p+r}A^{p+q})$$
$$B_r^{p,q}= F^pA^{p+q}\cap d(F^{p-r}A^{p+q-1})$$
$$Z_{\infty}^{p,q} = F^pA^{p+q}\cap \ker(d)$$
$$B_{\infty}^{p,q} = F^pA^{p+q}\cap \text{im}(d)$$
$$E_r^{p,q}=Z_r^{p,q}/(Z_{r-1}^{p+1,q-1}+B_{r-1}^{p,q})$$

\begin{prop}
	The differential $d$ which maps $Z_r^{p,q}$ to $Z_r^{p+r,q-r+1}$  induces the differential $d_r: E_r^{p,q}\rightarrow E_r^{p+r,q-r+1}$ of the associated spectral sequence, such that
	$$H^*(E_r^{*,*},d_r)=E_{r+1}^{*,*}$$
	$$E_1^{p,q}=H^{p+q}(F^pA/F^{p+1}A)$$
	$$E_{\infty}^{p,q}= F^pH^{p+q}(A,d)/F^{p+1}H^{p+q}(A,d)$$
	where $F^pH^*(A,d)=\text{Im}(H^*(F^pA,d)\rightarrow H^*(A,d))$.
\end{prop}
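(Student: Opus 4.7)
The plan is to verify the four claims in order, as each is a direct consequence of unwinding the definitions of $Z_r^{p,q}$, $B_r^{p,q}$ and $E_r^{p,q}$.

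First I would check that $d$ genuinely sends $Z_r^{p,q}$ into $Z_r^{p+r,q-r+1}$. If $x \in F^pA^{p+q}$ with $dx \in F^{p+r}A^{p+q+1}$, then $dx$ already lies in $F^{p+r}A^{p+q+1}$, and $d(dx)=0$ lies trivially in $F^{p+2r}A^{p+q+2}$, so $dx \in Z_r^{p+r,q-r+1}$. Next I would verify that $d$ descends to a well-defined map on the quotient $E_r^{p,q}$ by showing that it sends $Z_{r-1}^{p+1,q-1}+B_{r-1}^{p,q}$ into $Z_{r-1}^{p+r+1,q-r}+B_{r-1}^{p+r,q-r+1}$: the first summand maps into the first (same computation with $p+1$), and the second maps to zero since $d^2=0$. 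Since $d^2=0$, the induced $d_r$ squares to zero as well.

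Second, to identify $H^*(E_r^{*,*},d_r)$ with $E_{r+1}^{*,*}$, I would show that the $d_r$-cycles of $E_r^{p,q}$ pull back precisely to $Z_{r+1}^{p,q}+Z_{r-1}^{p+1,q-1}$: an element $x \in Z_r^{p,q}$ represents a cycle iff $dx \in Z_{r-1}^{p+r+1,q-r}+B_{r-1}^{p+r,q-r+1}$; using that $dx$ is already a boundary and $d^2=0$, the $B_{r-1}$ part can be absorbed, leaving the condition $dx \in F^{p+r+1}A^{p+q+1}$, i.e.\ $x \in Z_{r+1}^{p,q}$ (modulo $Z_{r-1}^{p+1,q-1}$). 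Similarly, $d_r$-boundaries pull back to $B_r^{p,q}+Z_{r-1}^{p+1,q-1}$. Taking the quotient gives exactly $Z_{r+1}^{p,q}/(Z_r^{p+1,q-1}+B_r^{p,q})=E_{r+1}^{p,q}$.

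Third, for the identification $E_1^{p,q}=H^{p+q}(F^pA/F^{p+1}A)$, I would just unwind the $r=1$ case: $Z_1^{p,q}=F^pA^{p+q}\cap d^{-1}(F^{p+1}A^{p+q+1})$ are exactly the cochains in $F^pA^{p+q}$ whose image in $F^pA/F^{p+1}A$ is a cocycle, while $Z_0^{p+1,q-1}+B_0^{p,q}=F^{p+1}A^{p+q}+d(F^pA^{p+q-1})$ are exactly the cochains mapping to coboundaries, so the quotient is $H^{p+q}(F^pA/F^{p+1}A)$. Finally, for $E_\infty^{p,q}=F^pH^{p+q}/F^{p+1}H^{p+q}$, I would note that $Z_\infty^{p,q}/B_\infty^{p,q}$ injects into $H^{p+q}(A,d)$ with image $F^pH^{p+q}(A,d)$; modding out by the image of $Z_\infty^{p+1,q-1}$ then yields the associated graded piece. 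The main conceptual obstacle is the compatibility check in the second step (matching cycles/boundaries of $d_r$ with the defining submodules of $E_{r+1}$), but this is a standard bookkeeping argument and presents no essential difficulty since the filtration is bounded within each degree in our applications.
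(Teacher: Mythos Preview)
The paper does not give its own proof of this proposition; it is stated in the appendix as a review of standard material with a reference to McCleary's book. Your sketch is essentially the textbook verification one finds there, and is correct in outline.

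One small slip worth fixing: in the well-definedness check you claim $d$ sends the summand $Z_{r-1}^{p+1,q-1}$ into $Z_{r-1}^{p+r+1,q-r}$ (``same computation with $p+1$''). But for $x\in Z_{r-1}^{p+1,q-1}$ one only knows $dx\in F^{p+r}A^{p+q+1}$, not $F^{p+r+1}$. The correct observation is that $dx\in F^{p+r}A^{p+q+1}\cap d(F^{p+1}A^{p+q})=B_{r-1}^{p+r,q-r+1}$, so $d$ sends the first summand into the \emph{second} summand of the denominator rather than the first. This does not affect your conclusion.
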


Furthermore, if $A$ is also an algebra, then we have
\begin{prop}\label{ss of alg}
	Suppose that $(A,d,F^*A)$ is a decreasing filtered differential graded algebra with product $ A\otimes A \rightarrow A$ satisfying $$ F^pA\cdot F^qA\subset F^{p+q}A$$
	Then there is an induced product on $E_r^{*,*}$ satisfying $$ E_r^{p,q}\cdot E_r^{s,t} \subset E_r^{p+s,q+t}$$ and $$d_r(x\cdot y)=d_r(x)\cdot y + (-1)^{p+q}x\cdot d_r(y)$$
	where $x\in E_r^{p,q}$ and $y \in E_r^{s,t}$.
	If the filtration $F^*A$ is bounded, then the spectral sequence $(E_r^{*,*},d_r)$ converges to $H(A,d)$ as algebras, i.e.,  $\bigoplus_{p,q}E_{\infty}^{p,q}$ is isomorphic to the associated graded algebra $Gr(H(A,d))=\bigoplus_{p,q}F^pH^{p+q}(A,d)/F^{p+1}H^{p+q}(A,d)$ and this isomorphism obeys the bigrading $(p,q)$.
\end{prop}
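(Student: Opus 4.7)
The proposal is to carry the product from $A$ down to each page of the spectral sequence and then to identify the resulting product on $E_\infty$ with the one on $Gr^FH(A,d)$. First I would define the product on $E_r^{p,q}$ representative-wise. Given $x \in Z_r^{p,q} \subset F^pA^{p+q}$ and $y \in Z_r^{s,t} \subset F^sA^{s+t}$, the hypothesis $F^pA\cdot F^qA \subset F^{p+q}A$ gives $x\cdot y \in F^{p+s}A^{p+q+s+t}$, and the (ambient) Leibniz rule yields
$$d(x\cdot y) = dx\cdot y + (-1)^{p+q}x\cdot dy \in F^{p+r+s}A + F^{p+s+r}A = F^{p+s+r}A,$$
so $x\cdot y \in Z_r^{p+s,q+t}$, giving a map $Z_r^{p,q}\otimes Z_r^{s,t}\to Z_r^{p+s,q+t}$ that respects the bigrading $(p,q)\mapsto(p+s,q+t)$.

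Next I would check that this product descends to $E_r^{p,q}=Z_r^{p,q}/(Z_{r-1}^{p+1,q-1}+B_{r-1}^{p,q})$. For an element $z\in Z_{r-1}^{p+1,q-1}$ and $y\in Z_r^{s,t}$, a direct calculation (using $dz\in F^{p+r}A$ and $dy\in F^{s+r}A$) gives $z\cdot y\in F^{p+s+1}A$ with $d(z\cdot y)\in F^{p+s+r}A$, so $z\cdot y \in Z_{r-1}^{p+s+1,q+t-1}$. For an element $b=du\in B_{r-1}^{p,q}$ with $u\in F^{p-r+1}A^{p+q-1}$, the Leibniz rule gives
$$b\cdot y = d(u\cdot y) \pm u\cdot dy,$$
where $u\cdot y\in F^{p+s-r+1}A$ and $d(u\cdot y)\in F^{p+s}A$, hence $d(u\cdot y)\in B_{r-1}^{p+s,q+t}$, while $u\cdot dy\in F^{p+s+1}A$ with $d(u\cdot dy)=du\cdot dy\in F^{p+s+r}A$, hence $u\cdot dy\in Z_{r-1}^{p+s+1,q+t-1}$. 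A symmetric argument handles multiplication on the other side, so the product is well-defined on $E_r$.

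For the Leibniz rule at the $E_r$-page, note that $dx$ already lives in $Z_r^{p+r,q-r+1}$ (since $d^2=0$) and similarly for $dy$, so the identity in $A$
$$d(x\cdot y)=dx\cdot y + (-1)^{p+q}x\cdot dy$$
descends to $d_r([x]\cdot[y]) = d_r[x]\cdot[y] + (-1)^{p+q}[x]\cdot d_r[y]$ on $E_r$. Passing to cohomology, the product on $E_{r+1}=H^*(E_r,d_r)$ agrees by construction with the iterated descent of the product on $A$. For convergence as algebras, I would observe that $F^pH(A,d)\cdot F^qH(A,d)\subset F^{p+q}H(A,d)$ since cycles in $F^pA$ and $F^qA$ multiply to cycles in $F^{p+q}A$. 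Boundedness of $F^*A$ ensures that for each bidegree the spectral sequence stabilizes after finitely many pages, so $E_\infty^{p,q}=F^pH^{p+q}(A,d)/F^{p+1}H^{p+q}(A,d)$, and the standard identification sends a class $[x]\in E_\infty^{p,q}$ to the class of a cycle representative $x\in F^pA$; comparing products representative-wise matches the induced product on $E_\infty$ with the product on $Gr^FH(A,d)$.

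The main obstacle is the bookkeeping in the second paragraph: proving well-definedness modulo $B_{r-1}^{p,q}$ genuinely uses the Leibniz rule to split $b\cdot y$ into a coboundary piece landing in $B_{r-1}^{p+s,q+t}$ and a correction term landing in $Z_{r-1}^{p+s+1,q+t-1}$, and all the filtration indices must be tracked simultaneously with the differential indices. Everything else is formal, but this verification is where the compatibility hypothesis $F^pA\cdot F^qA\subset F^{p+q}A$ and the Leibniz rule on $A$ are used in an essential interleaved way.
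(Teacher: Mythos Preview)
The paper does not supply its own proof of this proposition: it is stated in the appendix as a review of known results with a reference to McCleary's \emph{A User's Guide to Spectral Sequences}, and no argument is given. Your proposal is the standard proof one finds in such references, and the filtration-level bookkeeping you outline is correct; in particular your treatment of the $B_{r-1}^{p,q}$ case correctly uses that $b\in F^pA$ to place $d(u\cdot y)=b\cdot y\pm u\cdot dy$ in $F^{p+s}A$, so that $d(u\cdot y)\in B_{r-1}^{p+s,q+t}$ as required.
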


\begin{rem}
	$Gr(H^*(A,d))$ determines $H^*(A,d)$ up to the extension problem. If we use a field as coefficients or $E_\infty$ is free with $\mathbb{Z}$ coefficients, the extension problem is trivial, i.e.,  $H^*(A,d) \cong Gr(H^*(A,d))$ as modules. This isomorphism will also obey the product of $A$ in some special case. For example, if there exists an integer $k$ such that $x \in F^lH^*(A,d) \Leftrightarrow \deg x \leq kl$ for any homogeneous element $x$, then $H^*(A,d) \cong Gr(H^*(A,d))$ as algebras by dimensional reason.
\end{rem}

\subsection{Proof of Theorem \ref{str of presheaf}}
In this subsection, we give the proof of  Theorem \ref{str of presheaf}.
First we prove a lemma.
\begin{lem}\label{fundamental}
	Assume that $V_1, V_2$ are two linear subspaces of $\mathbb{R}^n$ such that  $V_1\cap V_2=0$, $\dim(V_1)=i$ and $\dim(V_2)=n-i$. By $\mu_1,\mu_2, \mu_3$ we denote the fundamental classes of $H^{n-i}(\mathbb{R}^n,\mathbb{R}^n-V_1), H^{i}(\mathbb{R}^n,\mathbb{R}^n-V_2), H^{n}(\mathbb{R}^n,\mathbb{R}^n-0)$, respectively. Then $ \mu_1\cup\mu_2=\mu_3$.
\end{lem}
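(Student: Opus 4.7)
The plan is to reduce the lemma to the well-known product formula relating cross product and cup product, by exploiting the direct sum decomposition $\mathbb{R}^n=V_1\oplus V_2$. Since $V_1\cap V_2=0$ and $\dim V_1+\dim V_2=n$, a linear isomorphism $\phi:\mathbb{R}^n\xrightarrow{\cong}\mathbb{R}^i\times \mathbb{R}^{n-i}$ carries $V_2$ to $\mathbb{R}^i\times\{0\}$ and $V_1$ to $\{0\}\times\mathbb{R}^{n-i}$. By naturality of the fundamental/Thom class under such a linear isomorphism, it suffices to prove the lemma in the standard model where $V_1=\{0\}\times \mathbb{R}^{n-i}$ and $V_2=\mathbb{R}^i\times\{0\}$. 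This is the step I would carry out first.

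In the standard model, the pair $(\mathbb{R}^n,\mathbb{R}^n-V_1)$ splits as the product of pairs $(\mathbb{R}^i,\mathbb{R}^i-0)\times(\mathbb{R}^{n-i},\mathbb{R}^{n-i})$, and $(\mathbb{R}^n,\mathbb{R}^n-V_2)$ splits as $(\mathbb{R}^i,\mathbb{R}^i)\times(\mathbb{R}^{n-i},\mathbb{R}^{n-i}-0)$. Next I would invoke the Künneth/cross product isomorphism to identify
\[
\mu_1=1\times \nu_1,\qquad \mu_2=\nu_2\times 1,
\]
where $\nu_1\in H^{n-i}(\mathbb{R}^{n-i},\mathbb{R}^{n-i}-0)$ and $\nu_2\in H^{i}(\mathbb{R}^{i},\mathbb{R}^i-0)$ are the respective fundamental classes. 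The uniqueness (up to sign, and unambiguously over $\mathbb{Z}_2$ as the paper is using) of the generator of the top cohomology of each pair guarantees these identifications.

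Then I would use the standard identity $(a\times 1)\cup(1\times b)=a\times b$ in relative cohomology of a product of pairs to compute
\[
\mu_1\cup\mu_2 \;=\; (1\times\nu_1)\cup(\nu_2\times 1) \;=\; \nu_2\times \nu_1.
\]
Finally, I would identify $\nu_2\times\nu_1\in H^n\bigl((\mathbb{R}^i,\mathbb{R}^i-0)\times(\mathbb{R}^{n-i},\mathbb{R}^{n-i}-0)\bigr)$ with $\mu_3\in H^n(\mathbb{R}^n,\mathbb{R}^n-0)$ via the equality of pairs
\[
(\mathbb{R}^i,\mathbb{R}^i-0)\times(\mathbb{R}^{n-i},\mathbb{R}^{n-i}-0)\;=\;\bigl(\mathbb{R}^n,\;\mathbb{R}^n-0\bigr),
\]
since the union of the two "sides" $(\mathbb{R}^i-0)\times\mathbb{R}^{n-i}$ and $\mathbb{R}^i\times(\mathbb{R}^{n-i}-0)$ is precisely $\mathbb{R}^n-0$. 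Because $\nu_2\times\nu_1$ is the generator of $H^n$ of this product pair, it corresponds to $\mu_3$, yielding $\mu_1\cup\mu_2=\mu_3$.

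The only mildly subtle point is the bookkeeping of which pair factor carries the reduced/unreduced piece in each of $\mu_1,\mu_2,\mu_3$; this is routine once the standard model is fixed. Working in $\mathbb{Z}_2$ coefficients (the setting in which the lemma will be applied for the proof of Theorem \ref{str of presheaf}) removes any sign ambiguities, so no signs need to be tracked.
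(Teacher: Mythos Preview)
Your approach is essentially the paper's own: reduce by a linear change of coordinates to the standard coordinate subspaces and then read off the cup product via the cross-product/K\"unneth formula; the only cosmetic difference is that the paper splits $\mathbb{R}^n$ into $n$ one-dimensional factors rather than your two blocks $\mathbb{R}^i\times\mathbb{R}^{n-i}$. Note, however, that you have swapped $V_1$ and $V_2$ in your linear isomorphism (you send $V_1$, which has dimension $i$, to $\{0\}\times\mathbb{R}^{n-i}$), and your pair decompositions then clash with your later identifications $\mu_1=1\times\nu_1$, $\mu_2=\nu_2\times 1$; once you fix this bookkeeping (and write $(\mathbb{R}^{n-i},\emptyset)$ rather than $(\mathbb{R}^{n-i},\mathbb{R}^{n-i})$ for the unreduced factor) the argument goes through exactly as in the paper.
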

\begin{proof}
	Write $\mathbb{R}^n=\mathbb{R}_1 \times \mathbb{R}_2\times \cdots\times \mathbb{R}_n$ be the product of $n$-copies of $\mathbb{R}$. Without the loss of generality, assume that $V_1=\mathbb{R}_1 \times \cdots\times\mathbb{R}_i\times 0$ and $V_2=0\times\mathbb{R}_{i+1} \times\cdots\times \mathbb{R}_n$. Let $e^i$ be the fundamental class of $H^1(\mathbb{R}_i,\mathbb{R}_i-0)$. Then we can write $\mu_1=1\times\cdots\times 1\times e^{i+1}\times\cdots\times e^{n}$, $\mu_2= e^1\times\cdots\times e^{i}\times 1\times \cdots \times 1$, so  $\mu_1 \cup \mu_2 = e^1\times e^{2}\times \cdots\times e^{n}$, which is just the fundamental class of $H^{n}(\mathbb{R}^n,\mathbb{R}^n-0)$.
\end{proof}

\begin{proof}[Proof of Theorem \ref{str of presheaf}]
	Let $u_p$ be the Thom class of $H^*(M,M-\Delta_p)$ for any $p\in L_G$. Assume that $p,q$ are independent. Then $r(p)+r(q)=r(p\vee q)$, which means that $\dim \Delta_p+\dim \Delta_q=\dim M^n+\dim \Delta_{p\vee q}$. Let $F_x$ be the fiber at $x$ of the normal bundle $N(\Delta_{p\vee q})$ (here we do not distinguish the normal bundle and the tubular neighborhood).  We can choose suitable local charts and tubular neighborhoods such that $F_x\cap \Delta_p$ and $F_x\cap \Delta_p$ are linear subspaces of $F_x$, denoted by $V_1,V_2$, respectively. Then we know that $V_1\cap V_2=0$ and $V_1\oplus V_2=F_x$ since $p,q$ are independent. Moreover,  $N(\Delta_p)_0 \cap F_x = F_x-V_1$ and $N(\Delta_q)_0 \cap F_x = F_x-V_2$, so $u_p|_{(F_x,F_x-V_1)}$ is the fundamental class of $H^{r(p)m}(F_x,F_x-V_1)$ and $u_q|_{(F_x,F_x-V_2)}$ is the fundamental class of $H^{r(q)m}(F_x,F_x-V_1)$. By Lemma~\ref{fundamental},  $(u_p \cup u_q)|_{(F_x,F_x-0)}$ is the fundamental class of $H^{r(p\vee q)m}(F_x,F_x-0)$  for every $x \in \Delta{p\vee q}$. Thus,  $u_p \cup u_q$ must be the Thom class $u_{p\vee q}$ of $H^*(M^n,M^n-\Delta_{p\vee q})$ by uniqueness of Thom class.

\vskip .2cm
	
	Now we want to check that $u_p\cup u_q=0$ if $p,q$ are dependent.  Firstly, assume that $p$ is an atom. Then we know that $(M^n,M^n-\Delta_p)=(M^2,M^2-\Delta(M^2))\times M^{n-2}$, so $u_p$ is just the product of the Thom class $u$ of $(M^2,M^2-\Delta(M^2))$ with the unit of $H^*(M^{n-2})$, and $u^2= u\cdot w_m(T(M))$ by Milnor's book \cite{Milnor1974}, where $w_m(T(M))$ is the top Stiefel-Whitney class of the tangent bundle $T(M)$. Furthermore, $u^2=0$ since $w_m(T(M))$ is the image of the diagonal cohomology class, so $u_p^2$ is also zero for the atom $p$. Thus, for every $p$,  $u_p^2=0$  since $p$ is always a join of some independent atoms, and so $u_p \cup u_q=0$ for $p\leq q$ since $u_q$ can be expressed as $u_p\cup u_{s_1}\cup u_{s_2}\cdots$ for some independent atoms $s_i$. For any  dependent $p,q$, let $q=s_1\vee s_2\cdots $ for some independent atoms $s_i$, then there exists some $k$ such that $p\vee s_1\vee\cdots \vee s_k\geq s_{k+1}$, so $u_p \cup u_q=u_{p\vee s_1 \vee\cdots \vee s_k}\cup u_{s_{k+1}}\cup\cdots$, which must be equal to zero by the above discussion.

\vskip .2cm
    Now, Let $\psi_{p,q}:H^*(\Delta_{p})\rightarrow H^*(\Delta_q)$ be the map induced by inclusion for $p\leq q$.
	Notice that every $H^*(\mathcal{C}(\mathcal{A})_p)=H^*(M^n,M^n-\Delta_p)$ is an $H^*(M^n)$-algebra such that $u_p\cup x = u_p \cdot \psi_{0,p}(x)$ for $x\in H^*(M^n)$, it is not difficult to check that
	$(u_p\cdot x_p)\cup (u_q\cdot x_q)=u_{p\vee q}\cdot (\psi_{p,p\vee q}(x_p)\cup \psi_{q,p\vee q}(x_q))$. Then we can define $E^*(M,G)/I\rightarrow A^*(L_G,H^*(\mathcal{A}_G))$ by map $e_{ij}\cdot x$ to $e_{ij}\otimes\psi_{0,p}(x)$ (recall that OS-algebra $A^*(L_G)$ is the exterior algebra generated by $e_{ij}$ with relations $\sum_s e_{i_1i_2}\ldots \widehat{e_{i_si_{s+1}}}\ldots e_{i_ki_1}=0$ for all cycle), it's a well defined injective map of algebra by above discussion, it's an isomorphism by checking the dimension of each side.
	
	The complex $(A^*(L_G,H^*(\mathcal{A}_G)),\partial)$ have zero differential by checking that $f^*_{p,q}=0$ for all $p\geq q$. We only need to check the case that $p$ covers $q$ by the functorial property of $f^*$. Assume that $p=q\vee s$ for an atom $s$. Then  we have that $f^*_{p,q}(u_p)=f^*_{p,q}(u_q \cup u_s)=u_q \cup f^*_{s,0}(u_s)$, which is zero since $f^*_{s,0}(u_s)$ is the product of the diagonal cohomology class with some unit.
\end{proof}

\end{document}